\documentclass[final,a4paper,11pt]{article}
\usepackage{mathptmx}
\usepackage{amssymb,amsmath, amsfonts,amsthm}
\usepackage{a4wide}
\usepackage{color}

\newcommand{\R}{\mathbb{R}}

\newcommand{\norm}[1]{\|#1\|}
\newcommand{\bnorm}[1]{\big\|#1\big\|}

\newcommand{\dist}[1]{{\rm dist}(#1)}

\newcommand{\mv}{\,\mid\,}

\newcommand{\Np}{\hat{\cal N}^p}

\newcommand{\skalp}[1]{\langle #1\rangle}

\newcommand{\xb}{\bar x}

\newcommand{\zb}{\bar z}

\newcommand{\oo}{o}

\newcommand{\lin}[1]{{\cal L}(#1)}

\newcommand{\cl}{{\rm cl\,}}

\newcommand{\co}{{\rm conv\,}}
\newcommand{\gph}{\mathrm{gph}\,}
\newcommand{\dom}{\mathrm{dom}\,}
\newcommand{\tto}{\rightrightarrows}

\newcommand{\myvec}[1]{\left(\begin{array}{c}#1\end{array}\right)}

\if{


}\fi

\newtheorem{theorem}{Theorem}[section]
\newtheorem{proposition}[theorem]{Proposition}
\newtheorem{remark}[theorem]{Remark}
\newtheorem{lemma}[theorem]{Lemma}
\newtheorem{corollary}[theorem]{Corollary}
\newtheorem{definition}[theorem]{Definition}

\title{Second-order optimality conditions for general nonconvex optimization problems and variational analysis of disjunctive systems}
\author{Mat\'u\v{s} Benko\thanks{Applied Mathematics and Optimization, University of Vienna, 1090 Vienna, Austria,
e-mail: matus.benko@univie.ac.at. This author's research was supported by the Austrian Science Fund (FWF) under grant P32832-N.}
\and Helmut Gfrerer\thanks{Institute of Computational Mathematics, Johannes Kepler University Linz, Austria, e-mail: helmut.gfrerer@jku.at. }
\and Jane J. Ye\thanks{Department of Mathematics
and Statistics, University of Victoria, Victoria, B.C., Canada V8W 2Y2, e-mail: janeye@uvic.ca. The research of this author was partially
supported by NSERC.}
\and Jin Zhang\thanks{Department of Mathematics, Southern University of Science and Technology, National Center for Applied Mathematics Shenzhen, Shenzhen, P.R. China.
e-mail: zhangj9@sustech.edu.cn.  This author's work is supported by National Natural Science Foundation of China (12222106), Guangdong Basic and Applied Basic Research Foundation (2022B1515020082).}
\and Jinchuan Zhou\thanks{Department of Statistics, School of Mathematics and Statistics, Shandong University of Technology,
 Zibo 255049, P.R. China, e-mail: jinchuanzhou@163.com. This author's work is supported by National Natural Science Foundation of China (11771255, 12271309), Young Innovation Teams of Shandong Province (2019KJI013) and Shandong Province
Natural Science Foundation (ZR2021MA066).}}

\date{}

\begin{document}

\maketitle

 \noindent
 {\small
 {\bf Abstract.}\ In this paper, we propose second-order sufficient optimality conditions
 for a very general nonconvex constrained optimization problem,
 which covers many prominent mathematical programs.
 Unlike the existing results in the literature, our conditions prove to be sufficient,
 for an essential local minimizer of second order,
 under merely basic smoothness and closedness assumptions on the data defining the {problem}.
 In the second part, we propose a comprehensive first- and second-order variational analysis of disjunctive systems and
 demonstrate how the second-order objects appearing in the optimality conditions can be effectively computed in this case.
\smallskip
 \noindent
 {\bf Keywords.}\ Second-order variational analysis,
second-order optimality conditions, essential local minimizer of second order, second subderivative, second-order tangent sets,
 lower generalized support function, disjunctive system.

\smallskip
 \noindent
{\bf AMS subject classifications.}\ 49J53, 49J52, 90C26, 90C46.
}

\section{Introduction}

For decades, variational analysis has been recognized as an important tool for studying optimization problems; we refer to the standard monographs \cite{BonSh00,ClarkeLSW,Io17,Mor,Mornew,RoWe98}.
Recently, second-order variational analysis has been developed rapidly;
see \cite{GfrMo17,GfrOut16a,GfrYeZh19,MoMoSa20,Ro2021a,Ro2021b} and the references therein.

In this paper, we will deal with some special aspects of second-order  variational analysis, namely second-order optimality conditions for an optimization problem in the form:
\begin{flalign*}
\begin{split}
\mbox{(GP)} \hspace{49mm} \min & \ \  f(x)\ \ \ {\rm s.t. } \ \  g(x)\in C.
\end{split}&
\end{flalign*}
Here $C \subset \mathbb{R}^m$ is a closed set and $f:\mathbb{R}^n\rightarrow \mathbb{R}$ and $g:\mathbb{R}^n\rightarrow \mathbb{R}^m$ are twice continuously differentiable functions unless otherwise specified.
This general model covers many common optimization problems, including the very challenging ones with constraints expressed via complementarity relations,
in which case not only the feasible
set $g^{-1}(C):= \{x \in \R^n \mv g(x) \in C\}$ but also
the set $C$ is nonconvex, see the comments below. 
\if{\textcolor{blue}{Although for simplicity of notation, we assume the spaces considered are Euclidean spaces, our analysis can be carried over to finite dimensional Hilbert spaces, and so it can cover semidefinite complementarity programs.}
}\fi

We concentrate on the development of tight second-order optimality conditions, i.e., the difference between the necessary and sufficient conditions should be small.
Note that there are also other intrinsic issues of second-order conditions like stability of solutions or the convergence of numerical algorithms.  For
example, Rockafellar \cite{Ro2021a, Ro2021b} has demonstrated the importance of second-order variational analysis in numerical optimization, but these topics are far beyond the scope of this paper.

Let us now provide a brief discussion on existing results dealing with second-order optimality conditions, both necessary and sufficient.
If $C$ is convex polyhedral, as in case of the standard nonlinear programs, second-order optimality conditions can be expressed via second derivative of the Lagrangian.
If $C$ lacks polyhedrality, however, an additional term is needed to capture the curvature of $C$ and there are various tools that can be utilized for that purpose.
When $C$ is convex, a comprehensive analysis of second-order conditions is available in Bonnans and Shapiro \cite[Sections 3.2 and 3.3]{BonSh00}. There, the second-order necessary conditions are derived within the framework of convex analysis and are of the following form, cf. \cite[Theorem 3.45]{BonSh00}: If a suitable constraint qualification holds at a local minimizer $\xb$ then for every critical direction $u$ and every convex subset $K(u)$ of the second-order tangent set $T_C^2(g(\xb); \nabla g(\xb)u)$ there is a multiplier $\lambda$ fulfilling first-order optimality conditions such that
\begin{equation}
   \nabla^2_{xx}L(\bar{x},\lambda)(u,u) -{\sigma}_{K(u)}(\lambda) \geq 0.\label{sonc1-support}
\end{equation}
Here, $L$ denotes the Lagrangian and $\sigma$ is the support function. In particular, if the second-order tangent set $T_C^2(g(\xb); \nabla g(\xb)u)$ is convex, we arrive at the condition
\begin{equation}
   \nabla^2_{xx}L(\bar{x},\lambda)(u,u) -{\sigma}_{T_C^2(g(\xb); \nabla g(\xb)u)}(\lambda) \geq0.\label{sonc-support}
\end{equation}
By \cite[Proposition 3.46]{BonSh00}, this condition is also necessary at a local minimizer, provided the multiplier $\lambda$ fulfilling the first-order optimality condition is unique,  regardless whether or not  $T_C^2(g(\xb); \nabla g(\xb)u)$ is convex.

In the very recent paper by Gfrerer et al.~\cite{GfrYeZh19}, optimality conditions have been stated for nonconvex $C$. In this case one has to consider different types of first-order optimality conditions involving strong (S-), Mordukhovich (M-) and Clarke (C-) multipliers, respectively. Moreover, the feasible region may behave quite differently when moving away from the minimizer $\xb$ in different directions. This fact motivates the use of different constraint qualifications and different types of multipliers when considering different critical directions.

When  a directional non-degeneracy condition  for the critical direction $u$ is satisfied, ensuring that directional S-, M- and C-multipliers coincide and are unique, condition \eqref{sonc-support} remains valid, see \cite[Corollary 5]{GfrYeZh19}. When relaxing the directional non-degeneracy to the directional Robinson constraint qualification, one can still show that for every convex subset $K(u)$ of $T_C^2(g(\xb); \nabla g(\xb)u)$ there is some directional C-multiplier $\lambda$ satisfying \eqref{sonc1-support}, cf. \cite[Corollary 4]{GfrYeZh19}. As it is shown in \cite[Proposition 8]{GfrYeZh19}, this is a very strong second-order necessary  condition. However, it has the disadvantage that the directional C-multiplier $\lambda$ does not only depend on the critical direction $u$, but also on the convex set $K(u)$. This can be remediated by the use of the so-called lower generalized support function $\hat \sigma$. It was shown in \cite{GfrYeZh19} that under the directional metric subregularity constraint qualification, which is weaker than the directional Robinson constraint qualification,  there is a directional M-multiplier $\lambda$ such that
\begin{equation*}
  \nabla^2_{xx}L(\bar{x},\lambda)(u,u) -\hat{\sigma}_{T_C^2(g(\xb); \nabla g(\xb)u)}(\lambda)\geq 0. \label{nosc}
\end{equation*}
This function $\hat{\sigma}$ is indeed an extension of the support function as for any closed set $D$ we  have $\hat\sigma_D\leq\sigma_D$ and the two coincide when applied to a convex set $D$.

Now let us consider the second-order sufficient conditions. Let $L^\alpha(x,\lambda):=\alpha f(x)+\skalp{\lambda, g(x)}$. If, at a feasible point $\xb$, for every critical direction $u$ the  set $C$ is {\em outer second-order regular} at $g(\xb)$ in direction $\nabla g(\xb)u$ and there are $\alpha\geq 0$ and $\lambda\in\R^m$  such that $\alpha \nabla f(\xb)u=0$, $\nabla_x L^\alpha(\bar{x},\lambda)=0$ and
\begin{equation*}
   \nabla^2_{xx}L^\alpha(\bar{x},\lambda)(u,u) -{\sigma}_{T_C^2(g(\xb); \nabla g(\xb)u)}(\lambda) >0 \label{sosc-support}
\end{equation*}
then the point $\xb$ is a local minimizer fulfilling the so-called {\em quadratic growth condition}. In a slightly different form, this result was first proved in \cite[Theorem 3.86]{BonSh00}  for convex sets $C$ and then extended in \cite[Theorem 4]{GfrYeZh19} to the nonconvex case. These sufficient conditions were essentially improved in the recent work by Mohammadi et al. \cite[Proposition 7.3]{MoMoSa20}, where the assumption of outer second-order regularity is dropped and the sigma term  $-{\sigma}_{T_C^2(g(\xb); \nabla g(\xb)u)}(\lambda)$ is replaced by the second-order subderivative ${\rm d}^2\delta_{C}(g(\bar{x});\lambda)(\nabla g(\bar{x})u)$ of the indicator function $\delta_C$. One of the main results of this paper is an improvement of \cite[Proposition 7.3]{MoMoSa20} in that the set $C$ is neither assumed to be convex nor to be parabolically derivable. Moreover, we do not need  the existence of an S-multiplier and we can choose different multipliers for every critical direction in order to fulfill the second-order sufficient condition. Finally, we  not only prove the quadratic growth condition, but  also show that the point in question is an essential local minimizer of second order.

Summing up these considerations, we see that, besides the imposed constraint qualification, the second-order optimality conditions rely on the three second-order objects $\hat\sigma_{T^2_C(\bar z;w)}(\lambda)$, $\sigma_{T^2_C(\bar z;w)}(\lambda)$ and ${\rm d}^2\delta_{C}(\bar z;\lambda)(w)$, each of them describing in some way the curvature of the set $C$, and which are linked together by the inequalities
\begin{equation*}\label{3_curv_terms_rel}
{\rm d}^2\delta_C(\zb;\lambda)(w)\leq -\sigma_{T^2_C(\zb;w)}(\lambda)\leq -\hat{\sigma}_{T^2_C(\zb;w)}(\lambda),
\end{equation*}
valid for any closed set $C$, every tangent $w\in T_C(\zb)$ and every $\lambda$ with $\skalp{\lambda,w}\geq 0$, see Proposition \ref{LemBasicSecOrdSubDer} below.

{When studying optimization problems via some elaborate machinery of variational analysis, interesting as it may be, the important question remains:
Can the employed tools be effectively computed or estimated and the obtained results suitably applied?}

While with convex $C$ we can formulate several standard programs as problem (GP), such as nonlinear programs, second-order cone programs, etc., we are primarily interested in programs modeled with nonconvex $C$.
Such programs are considered very challenging but also increasingly important by the optimization community.
Among others, they include the bilevel programs (see, e.g., Dempe \cite{dempe} and Ye and Zhu \cite{YeZhu}), programs with constraints governed by quasi-variational inequalities (see, e.g., Mordukhovich and Outrata \cite{MoOut07}),
and the mathematical program with second-order cone complementarity constraints (SOC-MPCC) (see, e.g., Outrata and Sun \cite{OutSun} and Ye and Zhou \cite{YeZhou}).
All of these problem classes can be modeled as a special case of problem (GP) with set $C$ possessing the following structure:
\begin{equation}\label{eq:StructureOfC}
C=\{(y,b(y)^T\eta)\mv (q(y),\eta)\in\gph N_P\},
\end{equation}
where $b$ and $q$ are sufficiently smooth mappings ($b$ maps into the space of matrices of an appropriate dimension{, in many cases there holds $b=\nabla q$}), $P$ is a convex polyhedral set and $N_P$ is the associated normal cone mapping.
{By taking $z:=(y, \eta)$, $B(z_1,z_2):=(z_1, b(z_1)^Tz_2)$,  $ G(z_1,z_2):=(q(z_1),z_2)$ and $D:=\gph N_P,$ the set $C$ given by \eqref{eq:StructureOfC} can be represented as}
\if{Take $ x:=(z,\eta)$,  $G(x):=(z, -\nabla_y f(z))$ and  $$C:=\{(z, \nabla_y q(z)^T \eta)| (q(z),\eta )\in \gph N_P\}.$$ By taking $z:=(y, \eta)$, $B(z_1,z_2):=(z_1, b(z_1)^Tz_2)$,  $ G(z_1,z_2):=(q(z_1),z_2)$ and $D:=\gph N_P,$ the set $C$ given by \eqref{eq:StructureOfC} can be represented as}
\fi
$$C=B(\Gamma), \mbox{ where } \Gamma=\{(z_1,z_2)\mv G(z_1,z_2)\in D\}.$$
Since $P$ is assumed to be convex polyhedral, the set $D$ is polyhedral, i.e., $D$ is the finite union of convex polyhedral sets.


{ In this paper, we will calculate $ \hat{\sigma}_{T_\Gamma^2(\bar z; w)}(\lambda)$, ${\sigma}_{T_\Gamma^2(\bar z; w)}(\lambda)$ and ${\rm d}^2\delta_{\Gamma}(\bar z;\lambda)(w)$ and defer the calculation of these three quantities for $\Gamma$ replaced by the set $C$ defined by \eqref{eq:StructureOfC} to a forthcoming paper Benko et al. \cite{BeGfrYeZhouZhang}. To accomplish this goal, in the second part of the paper, we  provide} a comprehensive first- and second-order variational analysis of the disjunctive system
\begin{equation*}
{\Gamma}:=G^{-1}(D) = \{ x \in \mathbb{R}^n \mv {G}(x)\in {D}\},\label{DJ}
\end{equation*}
where ${G}: \R^n \to \R^d$ is twice continuously differentiable and ${D} \subset \R^d$ is assumed to be polyhedral. The obtained results are also of independent interests and they may be useful in other applications.

We organize our paper as follows. Section 2 contains the preliminaries and auxiliary results.  In Section 3,  we derive second-order sufficient optimality condition for the general program (GP).  Sections 4 and 5 are devoted to the first- and the second-order variational analysis of the set $G^{-1}(D)$, respectively.  Finally, in Section 6 we demonstrate how to recover the second-order necessary and sufficient conditions from  \cite{Gfr14a} for the disjunctive program by means of our results.

\section{Preliminaries and auxiliary results}
In this section, we recall some background material from variational analysis and provide some preliminary results.
Let us begin with the notation.
The open unit ball is denoted by $\mathbb{B}$ and the open ball centred at $z$ with radius $\delta$ is denoted by $ \mathbb{B}(z,\delta)$.
For a set $S \subset \R^n$, denote by span $S$, cl $S$ and conv $S$,
its linear span, closure and convex hull, respectively.
We call a subspace $L \subset \R^n$ the {\em generalized lineality space} of $S$ and denote it by $\lin{S}$ provided that it is the largest subspace satisfying $S+L\subset S$.
Since any linear subspace includes $0$, we actually have $S+\lin{S}=S$,
and if $S$ is a closed convex cone, we get $\lin{S}=S\cap (-S)$.
The indicator function $\delta_S:\R^n \to \bar \R := [-\infty,+\infty]$ of $S$
is given as $\delta_S(z)=0$ for $z \in S$ and $\delta_S(z)=+\infty$ if $z \notin S$.
Next, if $S$ is closed, let $S^\circ$ and $\sigma_S :\R^n \to \bar \R$ stand for the polar cone to $S$ and the support function of $S$, respectively, i.e.,
 $S^\circ:=\{z^* \in \R^n \mv \langle z^*,z \rangle \leq 0,\  \forall z\in S \}$
and $\sigma_S(z^*):=\sup \{\langle z^*,z \rangle \mv z\in S\}$ for $z^*\in \R^n$.
{For an extended function $\varphi:\R^n \to \bar \R$, we define its effective domain by ${\rm dom}\varphi:=\{z|\varphi(z) <+\infty\}$}. For $w \in \R^n$, denote by ${\{w\}}^\perp$ the orthogonal complement of the linear space generated by $w$.
 Let $o:\mathbb{R}_+\rightarrow \mathbb{R}^n$ stand for a mapping with the property that $o(t)/t \rightarrow 0$ when $t \downarrow 0$.
 The symbol $z^{\prime} \overset{S}{\to} z$ indicates that $z^{\prime}\in S$
 and $z^{\prime} \rightarrow z$.
  For a mapping $\psi:\mathbb{R}^n\to \mathbb{R}^d$ and $z \in \R^n$, we denote by
 $\nabla \psi(z)\in \mathbb{R}^{d\times n}$ its Jacobian at $z$
 and by $\nabla^2 \psi(z)$ its second derivative at $z$ as defined by
 $$w^T\nabla ^2 \psi(z) :=\lim_{t\rightarrow 0} \frac{\nabla \psi(z+ tw)-\nabla \psi( z)}{t} \quad \forall w \in \mathbb{R}^n.$$
 Hence
 $$\nabla^2 \psi(z)(w,w):=w^T \nabla^2 \psi(z) w=(w^T \nabla^2 \psi_1(z)w, \dots, w^T \nabla^2 \psi_d(z)w)^T \quad \forall w\in \mathbb{R}^n.$$

\subsection{Variational geometry}

First we review the various classical concepts of tangent and normal cones.
 \begin{definition}[Tangent and normal cones, \cite{RoWe98}]
 Given $S\subset \mathbb{R}^n$, $z\in S$, the
 regular/Clarke tangent cone and
 tangent/contingent cone to $S$ at $z$ are defined, respectively, by
 \begin{eqnarray*}
  \widehat{T}_S(z)&:=&
  \liminf\limits_{{z' \stackrel{S}{\to}z} \atop {t\downarrow 0}}\frac{S-z'}{t}=
  \Big\{ w\in \R^n\,\Big|\, \forall \, t_k\downarrow 0,\, {z_k \stackrel{S}{\to}z}, \; \exists w_k\to w \
  \ {\rm with}\ \ z_k+t_k w_k\in S  \Big\},\\
 T_S(z)&:=&
  \big\{w \in\R^n \, \big| \, \exists \ t_k\downarrow 0,\; w_k\to w \ \ {\rm with}
 \ \ z+t_k w_k\in S \big\}.
 \end{eqnarray*}
For $w\in T_S(z)$,
the outer second-order tangent set to $S$ at $z$ in direction $w$ is defined by
  \begin{eqnarray*}
 T_S^2(z; w)
 := \left \{ s \in \R^n \, \Big| \, \exists \ t_k \downarrow 0, s_k \rightarrow s \ \ {\rm with}
 \ \  z+t_kw+\frac{1}{2}t^2_k s_k \in S
  \right \}.
 \end{eqnarray*}
 The regular/Fr\'echet normal cone, the proximal normal cone, and the limiting/Mordukhovich normal cone to $S$ at $z$ are given, respectively, by
 \begin{eqnarray*}
 \widehat{N}_S(z)&:=&\left\{z^*\in \R^n \, \big| \, \langle z^*, z'-z\rangle \le
 o\big(\|z'-z\|\big), \ \forall z'\in S\right\},\\
 \widehat{N}^p_S(z)&:=&\left\{z^* \in \R^n \, \big| \, \exists \ \gamma>0:   \skalp{z^*,z'-z}\leq \gamma \|z'-z\|^2, \ \ \forall z'\in S \right\},\\
 N_S(z)&:=&
 \left \{z^* \in \R^n \, \Big| \, \exists \ z_k \overset{S}{\to} z, z^*_{k}\rightarrow z^* \ {\rm with }\  z^*_{k}\in \widehat{N}_{S}(z_k) \right \}.
\end{eqnarray*}
 \end{definition}
 Recall that for any set $S$, one always has $\widehat{N}_S(z)=T_S(z)^\circ$ and if $S$ is closed,  then
 ${N}_S(z)^\circ=\widehat T_S(z)$; cf. Rockafellar and Wets \cite[Theorem 6.28]{RoWe98}.

 Recently, motivated by the formula $\widehat{T}_S(z)=
  \liminf_{z' \stackrel{S}{\to}z}T_S(z')$ (cf. \cite[Theorem 6.26]{RoWe98}),
a directional variant of the regular tangent cone has been introduced.
  \begin{definition}[Directional regular tangent  cone, {\cite[Definition 2]{GfrYeZh19}}]
  Given $S\subset \mathbb{R}^n$, $z\in S$ and $w\in \R^n$, the regular/Clarke tangent cone to $S$ at $z$ in direction $w$ is defined by
 \begin{eqnarray*}
 \widehat{T}_{S}(z;w):=
  \liminf\limits_{{t\downarrow 0, w'\to w}\atop {z+tw'\in S}}T_S(z+tw')
=\Big\{v\in \R^n \big| \, \forall t_k\downarrow 0, w_k \to w,   z+t_k w_k \in S, \exists v_k\rightarrow v \ {\rm with }\ v_k\in T_S( z+t_k w_k) \Big\}.
 \end{eqnarray*}
 \end{definition}
 It is easy to see that $\widehat{T}_{S}(z;0)=\widehat{T}_{S}(z)$.
 Similar to the regular tangent cone,  the directional regular tangent cone $\widehat{T}_S(z;w)$ is a closed and convex cone, see \cite[Proposition 3]{GfrYeZh19}.
\begin{proposition}[{\cite[Proposition 1]{GfrYeZh19}}]\label{PropRegTanCone}
 Given a closed set $S\subset \mathbb{R}^n$, for every $z\in S$ and every
$w\in T_S(z)$ one has
\[T_{T_S(z)}(w)+\widehat T_S(z;w)=T_{T_S(z)}(w),\quad T_S^2(z; w)+\widehat T_S(z;w) = T_S^2(z; w).\]
\end{proposition}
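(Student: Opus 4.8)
The plan is to reduce both identities to a single \emph{uniform prescribed-step} property of the directional regular tangent cone, namely the directional analogue of the characterization of the Clarke tangent cone in \cite[Theorem 6.26]{RoWe98}. Concretely, I would first establish the following: if $v\in\widehat T_S(z;w)$, then for every $\epsilon>0$ there is $\delta>0$ such that
\[ d\big(z+tw'+\tau v,\,S\big)\le\epsilon\,\tau \quad\text{whenever}\quad t\in(0,\delta),\ \|w'-w\|<\delta,\ z+tw'\in S,\ 0<\tau\le\delta t. \]
Since $\widehat T_S(z;w)$ is, by definition, the inner limit $\liminf_{t\downarrow0,\,w'\to w,\,z+tw'\in S}T_S(z+tw')$ of contingent cones, this is obtained by the same chaining argument as in the nondirectional case: one reaches the prescribed displacement $\tau v$ from the base point $z+tw'$ by concatenating many short steps, each certified by a contingent direction close to $v$ at an intermediate point. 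The decisive point, and the step I expect to be the \emph{main obstacle}, is that every intermediate point must again fall into the directional neighbourhood used to define $\widehat T_S(z;w)$; this is guaranteed precisely because the total displacement is of order $o(t)$ (we only prescribe $\tau\le\delta t$), so each intermediate point is still of the form $z+t\tilde w$ with $\tilde w$ close to $w$, and the uniformity of the inner limit applies.

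Granting this property, the second identity is immediate, and explains why the prescribed step is essential (an arbitrary small good step, as supplied directly by a contingent cone, would not match the second-order scale). Let $s\in T_S^2(z;w)$, realized by $t_k\downarrow0$, $s_k\to s$ with $z_k:=z+t_kw+\tfrac12 t_k^2 s_k\in S$, and fix $v\in\widehat T_S(z;w)$. Writing $z_k=z+t_kw'_k$ with $w'_k:=w+\tfrac12 t_ks_k\to w$, I apply the prescribed-step property along this base sequence with the prescribed steps $\tau_k:=\tfrac12 t_k^2$, so that $\tau_k/t_k=\tfrac12 t_k\to0$. This yields points of $S$ at distance $o(t_k^2)$ from $z+t_kw+\tfrac12 t_k^2(s_k+v)$, whence $s+v\in T_S^2(z;w)$. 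Since $0\in\widehat T_S(z;w)$, the reverse inclusion is trivial, giving $T_S^2(z;w)+\widehat T_S(z;w)=T_S^2(z;w)$.

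For the first identity I would establish the inclusion $\widehat T_S(z;w)\subseteq\widehat T_{T_S(z)}(w)$ into the \emph{regular} tangent cone of $K:=T_S(z)$ at $w$, and then invoke the standard fact that $T_K(w)+\widehat T_K(w)\subseteq T_K(w)$ for the closed cone $K$ (cf. \cite{RoWe98}). To verify the inclusion, take $v\in\widehat T_S(z;w)$ and, using the uniform characterization of the regular tangent cone, estimate $d(d+\lambda v,K)$ for $d\in K$ near $w$ and small $\lambda>0$: choosing base points $z+t_j(d+\eta_j)\in S$ with $t_j\downarrow0$, $\eta_j\to0$ that realize $d\in T_S(z)$, and applying the prescribed-step property with steps $\tau_j:=t_j\lambda$ (again $o(t_j)$), I obtain points of $S$ of the form $z+t_j(d+\lambda v+r_j)$ with $\|r_j\|\le\epsilon\lambda$; letting $j\to\infty$ produces an element of $K$ within $\epsilon\lambda$ of $d+\lambda v$, i.e. $d(d+\lambda v,K)\le\epsilon\lambda$. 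This is exactly the uniform condition placing $v$ in $\widehat T_K(w)$. Combined with the displayed standard fact and the trivial reverse inclusion $T_K(w)\subseteq T_K(w)+\widehat T_S(z;w)$, this yields $T_{T_S(z)}(w)+\widehat T_S(z;w)=T_{T_S(z)}(w)$.

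In summary, the entire statement rests on the prescribed-step lemma, after which each identity is a few lines. I therefore expect the bulk of the effort, and the only genuinely delicate argument, to be the chaining construction establishing that lemma, and in particular the bookkeeping ensuring that the intermediate points of the chain remain inside the directional neighbourhood of $w$ throughout.
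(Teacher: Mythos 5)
The paper contains no proof of this proposition at all: it is imported, with citation, from \cite[Proposition 1]{GfrYeZh19}, so there is no in-paper argument to compare against, and your attempt must be judged on its own merits. On those merits it is correct. The prescribed-step lemma is the right pivot, and both deductions from it are sound: for the second identity the rescaling $w_k'=w+\tfrac12 t_k s_k$, $\tau_k=\tfrac12 t_k^2$ works exactly as you describe, and for the first identity the inclusion $\widehat T_S(z;w)\subset \widehat T_{T_S(z)}(w)$ combined with the classical fact $T_K(w)+\widehat T_K(w)\subset T_K(w)$ for closed $K$ (itself provable from the same uniform characterization of the regular tangent cone, cf.\ \cite{RoWe98}) finishes the job, the reverse inclusions being trivial since $0\in\widehat T_S(z;w)$. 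The lemma itself does go through along the lines you sketch: the sequential liminf defining $\widehat T_S(z;w)$ upgrades to the uniform statement that for each $\epsilon>0$ there is $\delta_0>0$ with $\mathrm{dist}(v,T_S(z+t\tilde w))\le\epsilon$ whenever $z+t\tilde w\in S$, $0<t<\delta_0$, $\|\tilde w-w\|<\delta_0$; then a supremum/contradiction (or a.e.-derivative) argument applied to $\sigma\mapsto\mathrm{dist}(z+tw'+\sigma v,S)$ on $[0,\tau]$ closes the chain, and your bookkeeping observation is precisely what keeps every link admissible: each intermediate point lies within $\sigma(\|v\|+\epsilon)\le\delta t(\|v\|+\epsilon)$ of $z+tw'$, hence is again of the form $z+t\tilde w$ with the \emph{same} $t$ and $\|\tilde w-w\|\le\delta(1+\|v\|+\epsilon)$, so $\delta\le\delta_0/(1+\|v\|+\epsilon)$ suffices. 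Two harmless slips to fix in a write-up: the steps $\tau_j=t_j\lambda$ in your first-identity argument are not $o(t_j)$, and neither is the total displacement in the lemma ``of order $o(t)$''; what you need, and have, is merely $\tau\le\delta t$, which for fixed $\lambda$ just forces $\lambda\le\delta$ — consistent with the fact that the uniform test for membership in $\widehat T_{T_S(z)}(w)$ only has to be verified for all sufficiently small $\lambda$.
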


 \begin{definition}[Directional normal cones, {\cite{Gfr13a,GfrYeZh19,GM}}]\label{NormalCone}
 Given $S\subset \mathbb{R}^n$, $z\in S$ and a direction $w\in \mathbb{R}^{n}$, the limiting and the Clarke normal cone to $S$ in  direction $w$ at $z$ are given, respectively, by
 \begin{eqnarray*}
N_{S}(z; w)&:=&
\left \{z^* \in \R^n \, \Big| \, \exists \ t_{k}\downarrow 0, w_{k}\rightarrow w, z^*_{k}\rightarrow z^* \ {\rm with }\  z^*_{k}\in \widehat{N}_{S}(z+ t_{k}w_{k}) \right \},\\
N_{S}^c(z; w)&:=& {
{\cl} \co} N_{S}(z; w).
\end{eqnarray*}
\end{definition}

By \cite[Lemma 2.1]{Gfr14a}, when $S$ is the union of finitely many convex polyhedral sets, we have that for any $w\in T_S(z)$,
\begin{equation*}\label{dir-poly-1}
N_S(z;w)\subset N_S(z)\cap \{w\}^\perp.
\end{equation*}
Moreover, if $S$ is a closed convex set and $w\in T_S(z)$,
\begin{equation}
N_S(z;w)=N_{T_S(z)}(w)=N_S(z)\cap \{w\}^\perp.\label{convexcase}
\end{equation}

 \begin{proposition}[Directional tangent-normal polarity, {\cite[Proposition 3]{GfrYeZh19}}] \label{polarity}
 For a closed set $S\subset \mathbb{R}^n$, $ z\in S$, and $w\in \mathbb{R}^n$, one has
 $$\widehat{T}_S(z;w)=N_S(z;w)^\circ=N_S^c(z;w)^\circ, \quad \widehat{T}_S(z;w)^\circ=N_S^c(z;w).$$
 \end{proposition}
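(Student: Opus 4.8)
The three stated identities reduce to one. The middle equality $N_S(z;w)^\circ=N_S^c(z;w)^\circ$ is automatic, since a cone and its closed convex hull have the same polar. Moreover $N_S(z;w)$ is a cone, so the bipolar theorem gives $N_S(z;w)^{\circ\circ}=\cl\co N_S(z;w)=N_S^c(z;w)$; hence once the first identity is established, $\widehat T_S(z;w)^\circ=(N_S(z;w)^\circ)^\circ=N_S^c(z;w)$ follows by taking polars. The plan is therefore to prove
\[\widehat T_S(z;w)=N_S(z;w)^\circ.\]

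The inclusion $\widehat T_S(z;w)\subset N_S(z;w)^\circ$ is direct. Fix $v\in\widehat T_S(z;w)$ and $z^*\in N_S(z;w)$, and choose $t_k\downarrow 0$, $w_k\to w$, $z_k^*\to z^*$ with $z_k^*\in\widehat N_S(z+t_kw_k)$. Since $z+t_kw_k\in S$, the definition of $\widehat T_S(z;w)$ yields $v_k\to v$ with $v_k\in T_S(z+t_kw_k)$. Because $\widehat N_S(\cdot)=T_S(\cdot)^\circ$, we have $\skalp{z_k^*,v_k}\le 0$, and passing to the limit gives $\skalp{z^*,v}\le 0$; as $z^*$ was arbitrary, $v\in N_S(z;w)^\circ$.

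For the reverse inclusion I would argue by contraposition. Let $v\notin\widehat T_S(z;w)$. By the $\liminf$ in the definition there exist $t_k\downarrow 0$, $w_k\to w$ with $z_k:=z+t_kw_k\in S$ such that $\limsup_k{\rm dist}(v,T_S(z_k))>0$; after passing to a subsequence, ${\rm dist}(v,T_S(z_k))\ge\delta>0$. Let $v_k$ be a nearest point to $v$ in the closed cone $T_S(z_k)$ and set $u_k:=v-v_k$, so $u_k\in\widehat N^p_{T_S(z_k)}(v_k)$. Comparing $v_k$ with the points of its own ray $\{sv_k:s\ge0\}\subset T_S(z_k)$ forces $\skalp{u_k,v_k}=0$, whence $\skalp{u_k,v}=\|u_k\|^2\ge\delta^2$. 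The $u_k$ are bounded (indeed $\|u_k\|={\rm dist}(v,T_S(z_k))\le\|v\|$ since $0\in T_S(z_k)$), so along a subsequence $u_k\to u$ with $\skalp{u,v}\ge\delta^2>0$.

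It remains to show $u\in N_S(z;w)$, which contradicts $v\in N_S(z;w)^\circ$; this is the main obstacle. I would isolate the auxiliary fact that a proximal normal $\bar u$ to $T_S(z')$ at a point $\bar v\in T_S(z')$ is a limiting normal to $S$ at $z'$. Its proof rests on $T_S(z')=\Limsup_{s\downarrow 0}(S-z')/s$: being a proximal normal, $\bar v$ is, for small $\rho>0$, the unique nearest point of $\bar v+\rho\bar u$ in $T_S(z')$; choosing a recovery sequence $s_k\downarrow 0$ with points of $(S-z')/s_k$ tending to $\bar v$, the nearest points $\tilde v_k$ of $\bar v+\rho\bar u$ in $(S-z')/s_k$ stay bounded and each cluster point lies in $T_S(z')$ at distance at most $\rho\|\bar u\|$ from $\bar v+\rho\bar u$, hence equals $\bar v$ by uniqueness, so $\tilde v_k\to\bar v$. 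Since $z'+s_k\tilde v_k$ is a projection of $z'+s_k(\bar v+\rho\bar u)$ onto $S$, the vectors $(\bar v+\rho\bar u)-\tilde v_k\in\widehat N^p_S(z'+s_k\tilde v_k)$ converge to $\rho\bar u$ at base points tending to $z'$, giving $\rho\bar u\in N_S(z')$ and thus $\bar u\in N_S(z')$. Applying this at $z'=z_k$, $\bar v=v_k$, $\bar u=u_k$, a diagonal choice of the inner parameter $s=s(k)\downarrow 0$ with $s(k)/t_k\to 0$ keeps the base points in the form $z+t_k\omega_k$ with $\omega_k\to w$ while the normals converge to $\rho u$, which exhibits $\rho u\in N_S(z;w)$ and hence $u\in N_S(z;w)$. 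The delicate steps are the convergence of projections under the one-sided set convergence $(S-z')/s\to T_S(z')$ and the bookkeeping in the diagonalization that preserves the direction $w$.
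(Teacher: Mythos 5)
Your proof is correct, but there is nothing in this paper to compare it against: Proposition \ref{polarity} is quoted verbatim from \cite[Proposition 3]{GfrYeZh19} and never proved here. What you give is essentially the standard projection-based proof of Clarke tangent--normal duality (cf. \cite[Theorems 6.26 and 6.28]{RoWe98}) carried out with directional bookkeeping, which is also the natural route for the cited result. The individual steps all check out: the easy inclusion $\widehat{T}_S(z;w)\subset N_S(z;w)^\circ$ by pairing a defining sequence of $z^*\in N_S(z;w)$ with the tangents $v_k\to v$ supplied by the definition of $\widehat T_S(z;w)$; the reduction of the other two identities to this one via invariance of polars under $\cl\co$ and the bipolar theorem; and, for the hard inclusion, the projection of $v$ onto $T_S(z_k)$ giving $u_k\in\widehat N^p_{T_S(z_k)}(v_k)$ with $\skalp{u_k,v}\geq\delta^2$, the conversion of such a proximal normal to the tangent cone into a limit of proximal normals to $S$ itself (projecting $v_k+\rho u_k$ onto the dilated sets $(S-z_k)/s$ and using uniqueness of the nearest point for small $\rho$), and the diagonal choice $s(k)/t_k\to 0$, which is exactly what forces the base points to remain of the form $z+t_k\omega_k$ with $\omega_k\to w$, so that the limiting normal $\rho u$ lands in the \emph{directional} cone $N_S(z;w)$ rather than merely in $N_S(z)$; this contradicts $v\in N_S(z;w)^\circ$. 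One pedantic caveat, shared by the statement itself rather than specific to your argument: the bipolar step $N_S(z;w)^{\circ\circ}=\cl\co N_S(z;w)$ requires $N_S(z;w)\neq\emptyset$, which holds exactly when $w\in T_S(z)$; in the vacuous case $w\notin T_S(z)$ one has $\widehat T_S(z;w)=\R^n$ and $N_S(z;w)=\emptyset$, and the last equality of the proposition only survives under the convention that $N^c_S(z;w)$ denotes the closed convex \emph{conical} hull (hence $\{0\}$ for the empty set).
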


 The definition of the lineality space $\lin{S}$ readily yields
\begin{equation}\label{EqConsequLin}
T_S(z+l)=T_S(z),\quad \widehat N_S(z+l)=\widehat N_S(z),\quad \forall z\in S, \quad \forall l\in \lin{S}.
\end{equation}
 By the previous proposition, we also get the following result.
\begin{proposition}\label{lem3.1}
Let $S\subset \mathbb{R}^n$ be a closed set, $z\in S$ and $w\in T_S(z)$. Then
\begin{equation}\label{Eq18}
\big({\rm span\,}{N_S(z; w)}\big)^\circ=\big({\rm span\,}{ N^c_S(z; w)}\big)^\circ=\lin{\widehat T_S(z; w)}
\subset \lin{T_{T_S(z)}(w)}.
\end{equation}
\end{proposition}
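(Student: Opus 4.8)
The plan is to reduce the two equalities to a single polarity identity for closed convex cones and then obtain the concluding inclusion directly from Proposition \ref{PropRegTanCone}. The identity I will rely on is: for every closed convex cone $K\subset\R^n$,
$$\lin{K}=\big({\rm span\,}K^\circ\big)^\circ.$$
To see this, note first that ${\rm span\,}K^\circ$ is a linear subspace, so its polar cone coincides with its orthogonal complement; hence it suffices to check that $v\in\lin{K}$ if and only if $v\perp K^\circ$. Using the bipolar relation $K=K^{\circ\circ}$ together with $\lin{K}=K\cap(-K)$, membership $v\in\lin{K}$ means $\skalp{v,u}\le0$ and $\skalp{-v,u}\le0$ for all $u\in K^\circ$, which is precisely $\skalp{v,u}=0$ for all $u\in K^\circ$, i.e. $v\in({\rm span\,}K^\circ)^\perp$.

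Applying this with $K:=\widehat T_S(z;w)$, which is a closed convex cone by \cite[Proposition 3]{GfrYeZh19}, and invoking the polarity $\widehat T_S(z;w)^\circ=N^c_S(z;w)$ from Proposition \ref{polarity}, I immediately get the middle equality $\lin{\widehat T_S(z;w)}=\big({\rm span\,}N^c_S(z;w)\big)^\circ$.

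For the first equality it remains to show ${\rm span\,}N_S(z;w)={\rm span\,}N^c_S(z;w)$. The inclusion $\subset$ is clear since $N_S(z;w)\subset\cl\co N_S(z;w)=N^c_S(z;w)$. For the reverse, a linear span in $\R^n$ is a closed convex set, so from $N_S(z;w)\subset{\rm span\,}N_S(z;w)$ we obtain $N^c_S(z;w)=\cl\co N_S(z;w)\subset{\rm span\,}N_S(z;w)$, and taking spans yields ${\rm span\,}N^c_S(z;w)\subset{\rm span\,}N_S(z;w)$. Passing to polars then gives $\big({\rm span\,}N_S(z;w)\big)^\circ=\big({\rm span\,}N^c_S(z;w)\big)^\circ$, completing the chain of equalities.

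Finally, set $L:=\lin{\widehat T_S(z;w)}=\widehat T_S(z;w)\cap(-\widehat T_S(z;w))$. Since $L\subset\widehat T_S(z;w)$ and $L=-L\subset\widehat T_S(z;w)$, Proposition \ref{PropRegTanCone} gives
$$T_{T_S(z)}(w)+L\subset T_{T_S(z)}(w)+\widehat T_S(z;w)=T_{T_S(z)}(w).$$
Thus the subspace $L$ satisfies $T_{T_S(z)}(w)+L\subset T_{T_S(z)}(w)$, and since $\lin{T_{T_S(z)}(w)}$ is by definition the largest subspace with this property, we conclude $L\subset\lin{T_{T_S(z)}(w)}$, the asserted inclusion. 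The argument is essentially bookkeeping around the cone identity above; the only points demanding a little care—hardly a genuine obstacle—are that polars of subspaces reduce to orthogonal complements and that ${\rm span\,}$ is invariant under the operation $\cl\co$, both used in the two span computations.
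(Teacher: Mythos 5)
Your proof is correct and takes essentially the same route as the paper's: both arguments hinge on the polarity $\widehat T_S(z;w)^\circ=N^c_S(z;w)$ from Proposition \ref{polarity} plus elementary convex-cone algebra, and both obtain the final inclusion identically from Proposition \ref{PropRegTanCone} together with the maximality property defining the lineality space. The only cosmetic difference is that you derive the key identity $\lin{K}=\big({\rm span\,}K^\circ\big)^\circ$ directly from the bipolar theorem, whereas the paper assembles it from Rockafellar's calculus rules (${\rm span\,}K=K-K$ and the polar-of-sum formula).
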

\begin{proof}
Notice that
 \begin{eqnarray*}
 \big({\rm span\,}{N_S(z; w)}\big)^\circ&=&\big({\rm span\,}{ N_S^c(z; w)}\big)^\circ
 =\big(N_S^c(z; w)- N_S^c(z; w)\big)^\circ
 = \big( N_S^c(z; w) \big )^\circ \cap  -\big( N_S^c(z; w) \big)^\circ \\
 &=&\lin{ N_S^c(z; w)^\circ} =\lin{\widehat T_S(z; w)},
 \end{eqnarray*}
where the first equality holds obviously using the fact that the set $ N_S^c(z; w)$ is  a closed convex cone, the second equality follows from \cite[Theorem 2.7]{Roconvexanalysis},
the third equality follows from the calculus rule for polar cones in \cite[Corollary 16.4.2]{Roconvexanalysis}, the fourth equality hold by the fact that the set $N_S^c(z;w)^\circ$ is a closed convex cone and the fifth equality holds by Proposition \ref{polarity}.

Proposition \ref{PropRegTanCone} yields
$T_{T_S(z)}(w) + \widehat T_S(z; w) = T_{T_S(z)}(w)$ and since
$\widehat T_S(z; w)$ is a closed convex cone by
\cite[Proposition 3]{GfrYeZh19},
we have
$\lin{\widehat T_S(z; w)} = \widehat T_S(z; w) \cap (-\widehat T_S(z; w))$.
Thus
\[
    T_{T_S(z)}(w) + \lin{\widehat T_S(z; w)} \subset T_{T_S(z)}(w)
\]
holds as well and the inclusion in \eqref{Eq18} follows by the definition of the lineality space.
\end{proof}


\subsection{Directional proximal normal cone}

It turns out that we also need a directional version of the proximal normal cone, see Proposition \ref{LemBasicSecOrdSubDer}.
To this end, we need the following definition.

\begin{definition}[Directional neighborhood, {\cite{Gfr13a}}] \label{DirectionN}
Let $w \in \mathbb{R}^n$. For $\delta, \rho>0$,
$$
 V_{\delta, \rho}(w) := \left \{ w'\in \delta \mathbb{B}\left |
 \big\| \| w \|w' -\|w'\| w \big\|\leq \rho \|w'\|\| w \| \right. \right\}
$$
  is called a directional neighborhood of direction $w$.
\end{definition}
It is easy to see that $V_{\delta,\rho}(w)\subset V_{\delta,\rho}(0)=\delta \mathbb{B}$.
Hence, the directional neighborhood is in general smaller than the classical neighborhood.

{Recall that the proximal normal cone to a closed set $S$ at a point $z\in S$ can be equivalently given by
\[\widehat{N}^p_S(z):=\{z^* \in \R^n \mv \exists \ \delta, \gamma>0:   \skalp{z^*,z'-z}\leq \gamma \|z'-z\|^2 \ \ \forall z'\in S\cap (z+\delta \mathbb{B})\};\]
see, e.g., \cite[Proposition 1.5]{ClarkeLSW}.
By replacing the standard neighborhood by the directional one, we arrive at the following directional version of the proximal normal cone.}

\begin{definition}[Directional proximal normal cone]
Given a closed set $S\subset\mathbb R^n$, a point $z \in S$ and a direction $w\in T_S(z)$, we define the proximal prenormal cone to $S$ in direction $w$ at $z$ as
\[\Np_S(z;w):=\{z^* \in \R^n \mv \exists \ \delta,\rho, \gamma>0: \skalp{z^*,z'-z}\leq \gamma \|z'-z\|^2 \ \ \forall z'\in S\cap (z+V_{\delta,\rho}(w))\} ,\]
and the proximal normal cone to $S$ at $z$ in direction $w$ as
\[\widehat N^p_S(z;w):= \Np_S(z;w)\cap \{w\}^\perp.\]
In case when $w\not\in T_S(z)$ we set $\Np_S(z;w):= \widehat N^p_S(z;w):=\emptyset$.
\end{definition}
From definition, we can see that the proximal prenormal cone is  in general larger than the classical proximal normal cone, i.e. $\widehat{N}_S^p(z) \subset \Np_S(z;w)$ for any $w\in T_S(z)$.
Moreover from definition, any vector $z^*$ satisfying $\langle z^*, w\rangle <0$ is always included in $\Np_S(z;w)$.
In fact we have
\begin{equation}
\{z^*| \langle z^*, w\rangle <0\} \subset \Np_S(z;w) \subset \{z^*| \langle z^*, w\rangle \leq 0\}. \label{prenormal} \end{equation}
Since the vectors $z^*$ satisfying  $\langle z^*, w\rangle <0$  do not provide much useful information, it is natural to restrict the directional proximal prenormals by intersecting with { the orthogonal complement of $w$}. This restriction yields the concept of directional proximal normal cone, and ensures that
the directional proximal normal is {contained} in the directional limiting normal cone, see Proposition \ref{PropDirProxNormalCone}.
In particular when $S$ is a closed convex set, {combining} (\ref{convexcase}) and (\ref{EqPropDirProxNormalCone}) below, we get
\begin{equation}\label{equality-convex}
\widehat N^p_S(z;w)=N_S(z;w)= N_S(z)\cap \{w\}^\perp=N_{T_S(z)}(w).
\end{equation}


In the following proposition we show convexity of the directional proximal normal cone and compare it with other normal cones.

\begin{proposition}\label{PropDirProxNormalCone}
Let $S\subset\R^n$ be closed and let $w\in T_S(z)$ be given. Then both $\Np_S(z;w)$ and $\widehat N^p_S(z;w)$ are convex cones and
\begin{equation}\label{EqPropDirProxNormalCone}
  \widehat N^p_S(z)\cap\{w\}^\perp\subset \widehat N^p_S(z;w)\subset \widehat N_{T_S(z)}(w){\subset N_{T_S(z)}(w) \subset N_S(z;w)}.
\end{equation}
\end{proposition}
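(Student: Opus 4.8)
The plan is to verify the two structural claims (convexity and the cone property) directly from the definition, and then establish the chain \eqref{EqPropDirProxNormalCone} link by link, the only substantial links being the second, $\widehat N^p_S(z;w)\subset \widehat N_{T_S(z)}(w)$, and the last. Throughout I treat $w\neq 0$ as the main case; the degenerate case $w=0$, where $V_{\delta,\rho}(0)=\delta\mathbb B$ and $\{w\}^\perp=\R^n$, collapses to the classical inclusions.

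For convexity and the cone property of $\Np_S(z;w)$, I would first record the monotonicity $V_{\delta,\rho}(w)\subset V_{\delta',\rho'}(w)$ whenever $\delta\le\delta'$ and $\rho\le\rho'$ (a smaller radius and a tighter conic inequality). Taking $z_1^*,z_2^*\in\Np_S(z;w)$ with parameters $(\delta_i,\rho_i,\gamma_i)$ and $t\in[0,1]$, the choice $\delta:=\min\{\delta_1,\delta_2\}$, $\rho:=\min\{\rho_1,\rho_2\}$ makes both proximal estimates valid simultaneously on $S\cap(z+V_{\delta,\rho}(w))$; convex-combining them with weights $t,1-t$ and setting $\gamma:=t\gamma_1+(1-t)\gamma_2$ shows $tz_1^*+(1-t)z_2^*\in\Np_S(z;w)$. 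Positive homogeneity is immediate (scaling $z^*$ by $\alpha>0$ scales $\gamma$, and $0\in\Np_S(z;w)$ trivially), so $\Np_S(z;w)$ is a convex cone, and intersecting with the subspace $\{w\}^\perp$ shows $\widehat N^p_S(z;w)$ is one too.

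The first inclusion in \eqref{EqPropDirProxNormalCone} is immediate: if $z^*\in\widehat N^p_S(z)$ with parameters $(\delta,\gamma)$, then since $V_{\delta,\rho}(w)\subset\delta\mathbb B$ for every $\rho>0$ the proximal estimate persists on $S\cap(z+V_{\delta,\rho}(w))$, whence $\widehat N^p_S(z)\subset\Np_S(z;w)$, and intersecting with $\{w\}^\perp$ yields $\widehat N^p_S(z)\cap\{w\}^\perp\subset\widehat N^p_S(z;w)$. The third inclusion $\widehat N_{T_S(z)}(w)\subset N_{T_S(z)}(w)$ is the universal fact that regular normals are limiting normals. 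For the last inclusion $N_{T_S(z)}(w)\subset N_S(z;w)$ I would reduce, by a diagonal argument, to the pointwise statement $\widehat N_{T_S(z)}(v)\subset N_S(z;v)$ for every $v\in T_S(z)$: given $w^*\in N_{T_S(z)}(w)$ realized by $w_k\to w$ in $T_S(z)$ and $w_k^*\to w^*$ with $w_k^*\in\widehat N_{T_S(z)}(w_k)$, the pointwise statement gives $w_k^*\in N_S(z;w_k)$, and selecting from each defining sequence of $N_S(z;w_k)$ a triple $(t_k,v_k,z_k^*)$ with $t_k<1/k$, $\|v_k-w_k\|<1/k$, $\|z_k^*-w_k^*\|<1/k$ and $z_k^*\in\widehat N_S(z+t_kv_k)$ produces $t_k\downarrow0$, $v_k\to w$, $z_k^*\to w^*$, i.e.\ $w^*\in N_S(z;w)$.

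The key new step is the second inclusion, where the orthogonality built into $\widehat N^p_S(z;w)$ is essential. Let $z^*\in\Np_S(z;w)\cap\{w\}^\perp$ with parameters $(\delta,\rho,\gamma)$. The core observation is that $\skalp{z^*,v}\le0$ for every $v\in T_S(z)$ lying in the open directional neighborhood $\{v:\ \big\|\,\|w\|v-\|v\|w\,\big\|<\rho\|v\|\,\|w\|\}$: choosing $t_k\downarrow0$, $v_k\to v$ with $z+t_kv_k\in S$, one has $t_kv_k\in V_{\delta,\rho}(w)$ for large $k$, so $\skalp{z^*,v_k}\le\gamma t_k\|v_k\|^2\to0$. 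Now take $\eta\in T_{T_S(z)}(w)$ witnessed by $s_k\downarrow0$, $\eta_k\to\eta$ with $w+s_k\eta_k\in T_S(z)$; since $w+s_k\eta_k\to w$, for large $k$ this vector lies in the open directional neighborhood, hence $\skalp{z^*,w+s_k\eta_k}\le0$, and because $\skalp{z^*,w}=0$ this reads $s_k\skalp{z^*,\eta_k}\le0$. Dividing by $s_k$ and letting $k\to\infty$ gives $\skalp{z^*,\eta}\le0$, so $z^*\in T_{T_S(z)}(w)^\circ=\widehat N_{T_S(z)}(w)$. I expect the genuine obstacle to be the pointwise comparison $\widehat N_{T_S(z)}(v)\subset N_S(z;v)$ feeding the last inclusion: for general closed $S$ one must manufacture regular normals of $S$ at points $z+tv'$ ($v'\to v$, $t\downarrow0$) approximating a prescribed $v^*\in\widehat N_{T_S(z)}(v)$. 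The natural device is to perturb a feasible point $z+t_k\hat v_k\in S$ in the direction $v^*$ and project back onto $S$; the resulting proximal normal, suitably normalized, has the form $v^*-g_k/\theta_k$, and while the projection estimate readily bounds $\|g_k\|\le 2\theta_k\|v^*\|+o(t_k)/t_k$, forcing the residual $g_k/\theta_k\to0$ (so that the normal aligns with $v^*$) is exactly where the defining regular-normal inequality $\skalp{v^*,\eta-v}\le o(\|\eta-v\|)$ on $T_S(z)$ must be exploited, and this alignment—or an appeal to the corresponding comparison in the directional-normal-cone literature—is the crux of the whole proof.
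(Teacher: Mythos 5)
Your proof of the second inclusion $\widehat N^p_S(z;w)\subset \widehat N_{T_S(z)}(w)$ --- the only link in the chain that the paper proves from scratch --- is correct and is in essence the paper's own argument: both exploit the proximal inequality at points $z+tv$ with $tv$ lying in a directional neighborhood $V_{\delta,\rho}(w)$, combined with $\skalp{z^*,w}=0$, to polarize against $T_{T_S(z)}(w)$. The organization differs slightly, and to your advantage: the paper runs a single diagonal extraction, choosing an index $j(k)$ inside the double sequence realizing $z+\tau^k_j(w+t_kv_k+s^k_j)\in S$ subject to four simultaneous smallness conditions, whereas you first establish the exact intermediate inequality $\skalp{z^*,v}\le 0$ for every $v\in T_S(z)$ in the open directional cone around $w$ (legitimate, since the inner limit $\skalp{z^*,v_k}\le\gamma t_k\norm{v_k}^2\to 0$ already gives an exact, not approximate, bound), and only then apply it to $v=w+s_k\eta_k$. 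This modularization removes the need for any careful diagonal choice. Your treatment of the cone/convexity claims and of the first and third inclusions matches the paper, which declares these easy or trivial.

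The one place your proposal is incomplete is the last inclusion $N_{T_S(z)}(w)\subset N_S(z;w)$. The paper does not prove it either: it simply cites \cite[Lemma 3]{GfrYeZh19}. Your reduction by diagonalization to the pointwise claim $\widehat N_{T_S(z)}(v)\subset N_S(z;v)$ for $v\in T_S(z)$ is valid, but the projection--perturbation device you sketch for that claim stops exactly at the alignment step you yourself flag (forcing the residual $g_k/\theta_k\to 0$), so as written it is not a proof. Since you explicitly offer the alternative of appealing to the corresponding comparison in the directional-normal-cone literature --- which is precisely what the paper does --- this is a gap only under a self-containedness requirement the paper itself does not meet; relative to the paper's own standard, your proposal covers everything the paper actually proves, and by essentially the same method.
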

\begin{proof}
By definition, it is easy to show that $\Np_S(z;w)$ is a convex cone. Thus $\widehat N^p_S(z;w)$ is also a convex cone as the intersection of two convex cones.

  The first inclusion in \eqref{EqPropDirProxNormalCone} follows immediately from $\widehat{N}_S^p(z) \subset \Np_S(z;w)$, the third one is trivial and the last one
  was proved in \cite[Lemma 3]{GfrYeZh19}.
  Thus, it remains to show the second inclusion.

  Since
  $$\widehat N^p_S(z;0)=\widehat N^p_S(z)\subset \widehat N_S(z)= \widehat N_{T_S(z)}(0),$$
  where the last equation follows from \cite[eq. (3)]{GfrYe20}, the inclusion holds true for $w=0$.
  Now let $w\not=0$ and consider $z^*\in \widehat N^p_S(z;w)$.
  We wish to show that
 \begin{equation}\label{Eq(8)}
 z^*\in  (T_{T_S(z)}(w))^\circ =\widehat N_{T_S(z)}(w).
 \end{equation}
  By definition, we can find some $\delta>0, \gamma>0$ such that
  \begin{equation}\label{EqAuxDirProx1}
  \skalp{z^*,z'-z}\leq \gamma\norm{z'-z}^2\quad \forall z'\in (z+V_{\delta,\delta}(w))\cap S.
  \end{equation}
To show (\ref{Eq(8)}) we pick $v\in T_{T_S(z)}(w)$ together with sequences $t_k\downarrow 0$ and $v_k\to v$ satisfying $w+t_kv_k\in T_S(z)$.
For every $k$ there exist sequences $\tau^k_j\downarrow 0$ and $s^k_j\to 0$ as $j\to\infty$ satisfying $z+\tau^k_j(w+t_kv_k+s^k_j)\in S, \forall \, j$.
For all $k$ sufficiently large we have
 $\bnorm{\frac{w+t_kv_k}{\norm{w+t_kv_k}}-\frac{w}{\norm{w}}}<\frac{\delta}2$
and we can find an index $j(k)$ such that
\[\tau^k_{j(k)}<\frac 1k t_k,\ \norm{s^k_{j(k)}}<\frac 1k t_k,\
\bnorm{\frac{w+t_kv_k+s^k_{j(k)}}{\norm{w+t_kv_k+s^k_{j(k)}}}-\frac{w+t_kv_k}{\norm{w+t_kv_k}}}<\frac \delta2,\
  \tau^k_{j(k)}\norm{w+t_kv_k+s^k_{j(k)}}<\delta.\]
  It follows by Definition \ref{DirectionN} that $z+ \tau^k_{j(k)}(w+t_kv_k+s^k_{j(k)})\in (z+V_{\delta,\delta}(w))\cap S$ and together with $\skalp{z^*,w}=0$ we obtain from \eqref{EqAuxDirProx1} that
  {
 \[\tau^k_{j(k)}t_k\skalp{z^*, v_k+\frac{s^k_{j(k)}}{t_k}}=\skalp{z^*,(z+ \tau^k_{j(k)}(w+t_kv_k+s^k_{j(k)}))-z}\leq \gamma (\tau^k_{j(k)})^2\norm{w+t_kv_k+s^k_{j(k)}}^2.
 \]}
  Dividing this inequality by $\tau^k_{j(k)} t_k$ we conclude that
 \[\skalp{z^*,v}=\lim_{k\to\infty}\skalp{z^*, v_k+\frac{s^k_{j(k)}}{t_k}}\leq \lim_{k\to\infty}\gamma\frac{\tau^k_{j(k)}}{t_k}\norm{w+t_kv_k+s^k_{j(k)}}^2=0.\]
 Hence (\ref{Eq(8)}) holds and therefore the second inclusion in \eqref{EqPropDirProxNormalCone} follows.
\end{proof}

\subsection{Polyhedral sets}

Next, we provide formulas for tangents and normals to polyhedral sets.
A set $D\subset\R^d$ is said to be {\em convex polyhedral} if it is the intersection of finitely many halfspaces, whereas it is said to be {\em polyhedral} whenever it is the union of finitely many convex polyhedral sets.

Polyhedral sets enjoy the following important property, see also \cite[Exercise 6.47]{RoWe98}.

\begin{proposition}[Exactness of tangential approximations, {\cite[Proposition~8.24]{Io17}}]
 \label{lemexact}
 If $D$ is polyhedral and $z \in D$, then
there is an open neighborhood $W$ of $0$ such that
\begin{equation*}
(D-{z})\cap W=T_{D}({z})\cap W,\label{Eqtangent}
\end{equation*}
or, equivalently,
\begin{equation}\label{EqRedPoly}
{D \cap ({z}+W)=({z}+T_D({z}))\cap ({z}+W)}.
\end{equation}
\end{proposition}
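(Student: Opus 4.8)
The plan is to reduce everything to the convex polyhedral case and then lift to the general polyhedral case through the decomposition $D=\bigcup_{j=1}^k D_j$ into convex polyhedral pieces.

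First I would treat a single convex polyhedral set. Write $D_j=\{x\mv \skalp{a_i,x}\le b_i,\ i=1,\dots,m\}$ and, for $z\in D_j$, let $I$ denote the active index set $\{i\mv \skalp{a_i,z}=b_i\}$. For the inactive indices the inequalities hold strictly, so by continuity there is a neighborhood $W_j$ of $0$ on which they remain satisfied for every $z+w$ with $w\in W_j$; on such a $W_j$ the membership $z+w\in D_j$ is therefore governed solely by the active constraints, i.e. by $\skalp{a_i,w}\le 0$ for $i\in I$. Since the latter conditions are precisely the description of $T_{D_j}(z)$, this yields the single-set identity $(D_j-z)\cap W_j=T_{D_j}(z)\cap W_j$.

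Next I would assemble a global neighborhood. Let $J=\{j\mv z\in D_j\}$. For $j\notin J$ the point $z$ lies outside the closed set $D_j$, so a neighborhood $W_0$ of $0$ can be chosen with $(z+W_0)\cap D_j=\emptyset$. The key structural fact I would establish is the tangent-cone-of-a-union formula $T_D(z)=\bigcup_{j\in J}T_{D_j}(z)$: the inclusion $\supseteq$ is immediate from $D_j\subset D$, while for $\subseteq$ one takes sequences $t_k\downarrow 0$, $w_k\to w$ with $z+t_kw_k\in D$, observes that by finiteness of the union some piece $D_j$ must contain infinitely many of these points, passes to a subsequence to get $w\in T_{D_j}(z)$, and notes that closedness of $D_j$ forces $z\in D_j$, i.e. $j\in J$. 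Setting $W=W_0\cap\bigcap_{j\in J}W_j$, an open neighborhood of $0$, is then the right choice.

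Finally I would verify the two inclusions defining $(D-z)\cap W=T_D(z)\cap W$ on this $W$. For $\subseteq$, any $w\in(D-z)\cap W$ has $z+w$ in some $D_j$; membership in $z+W_0$ rules out $j\notin J$, so $j\in J$, and the single-set identity on $W_j$ places $w$ in $T_{D_j}(z)\subset T_D(z)$. For $\supseteq$, any $w\in T_D(z)\cap W$ lies in some $T_{D_j}(z)$ with $j\in J$ by the union formula, and the single-set identity on $W_j$ returns $z+w\in D_j\subset D$. The equivalent reformulation \eqref{EqRedPoly} then follows by adding $z$ throughout. The main obstacle I anticipate is not any single estimate but the bookkeeping in combining the finitely many neighborhoods $W_j$ with the separating neighborhood $W_0$, and in particular in using the union formula $T_D(z)=\bigcup_{j\in J}T_{D_j}(z)$ in the correct direction within each of the two inclusions; the convex-polyhedral base case itself is essentially a continuity argument on the inactive constraints.
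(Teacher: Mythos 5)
Your proposal is correct and follows essentially the same route as the paper's own argument: decompose $D$ into finitely many convex polyhedral pieces $D_j$, apply the exactness identity $(D_j-z)\cap W_j=T_{D_j}(z)\cap W_j$ to each piece containing $z$, and glue via the union formula $T_D(z)=\bigcup_{j\in J}T_{D_j}(z)$ on a common neighborhood. The only difference is that you prove the convex base case (by the active/inactive constraint argument) and the union formula yourself, and you explicitly discard the pieces with $z\notin D_j$ via $W_0$, whereas the paper simply cites Rockafellar--Wets, Exercise 6.47, for the base case and leaves these routine details implicit.
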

\if{
\begin{proof}
{Let  $D:=\displaystyle \cup_{i=1}^s D_i$ where  each $D_i (i=1,\dots, s) $ is  convex polyhedral and $\bar{z} \in
D$.} Then by Rockafellar and Wets \cite[Exercise 6.47]{RoWe98} there is an open neighborhood $V$ of $0$ such that
{$(D_i-\bar{z})\cap V=T_{D_i}(\bar{z})\cap V$}, $i=1,\ldots,s$. Taking the union on both sides of the above equations, since $T_{D}(\bar{z})=\bigcup_{i=1}^s T_{D_i}(\bar{z})$ we obtain (\ref{Eqtangent}).
So (\ref{EqRedPoly}) holds equivalently.
\end{proof}
}\fi

Equation (\ref{eqn7}) in the result below extends \cite[Proposition 13.13]{RoWe98} from convex polyhedral sets to polyhedral sets.
\begin{proposition}\label{Prop2.13}
Let $D$ be a polyhedral set, $z \in D$ and $w\in T_D(z)$. Then
\begin{eqnarray}
 &&T^2_{D}(z;w) = T_{T_D(z)}(w), \label{eqn7}\\
 &&\widehat N_{T_D(z)}(w)= \widehat N_{T^2_D(z; w)}(0)=(T^2_D(z; w))^\circ, \label{eqn8}\\
 &&N_D(z;w)= N_{T_D(z)}(w)=N_{T^2_D(z; w)}(0). \label{eqn9}
 \end{eqnarray}
\end{proposition}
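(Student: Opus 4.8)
The whole argument hinges on reducing $D$ to its tangent cone $K:=T_D(z)$ near $z$. Since $D$ is polyhedral, Proposition~\ref{lemexact} furnishes an open neighborhood $W$ of $0$ with $D\cap(z+W)=(z+K)\cap(z+W)$, so locally $D$ and $z+K$ are indistinguishable; moreover $K=\bigcup_i T_{D_i}(z)$ is again a (polyhedral) cone, and its positive homogeneity will do the rest. First I would prove \eqref{eqn7}. For $s\in T_D^2(z;w)$ choose $t_k\downarrow 0$ and $s_k\to s$ with $z+t_kw+\tfrac12 t_k^2 s_k\in D$. As the increment $t_kw+\tfrac12 t_k^2 s_k\to 0$, it eventually lies in $W$, and exactness turns membership in $D$ into membership in $K$; running this in both directions gives $T_D^2(z;w)=T_K^2(0;w)$. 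Because $K$ is a cone, $t_kw+\tfrac12 t_k^2 s_k\in K$ is equivalent to $w+\tfrac12 t_k s_k\in K$, which, upon setting $\tau_k:=t_k/2\downarrow 0$, is exactly the defining condition for $s\in T_K(w)=T_{T_D(z)}(w)$. Hence \eqref{eqn7}.

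Next, \eqref{eqn8} follows readily from \eqref{eqn7} together with the general identity $\widehat N_S(\cdot)=T_S(\cdot)^\circ$. Indeed $\widehat N_{T_D(z)}(w)=\widehat N_K(w)=T_K(w)^\circ$, while $(T_D^2(z;w))^\circ=T_K(w)^\circ$ by \eqref{eqn7}. Finally, $Q:=T_D^2(z;w)=T_K(w)$ is a closed cone, so $T_Q(0)=Q$ and therefore $\widehat N_Q(0)=T_Q(0)^\circ=Q^\circ$, which supplies the remaining equality $\widehat N_{T_D^2(z;w)}(0)=(T_D^2(z;w))^\circ$.

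For \eqref{eqn9} I would argue directly from Definition~\ref{NormalCone}. Take $z^*\in N_D(z;w)$ with $t_k\downarrow 0$, $w_k\to w$, $z_k^*\to z^*$ and $z_k^*\in\widehat N_D(z+t_kw_k)$. For large $k$ the point $z+t_kw_k$ lies in $z+W$, so exactness yields both $t_kw_k\in K$ (hence $w_k\in K$) and $\widehat N_D(z+t_kw_k)=\widehat N_K(t_kw_k)$; since $K$ is a cone, $\widehat N_K(t_kw_k)=\widehat N_K(w_k)$. Thus the admissible sequences for $N_D(z;w)$ are precisely those $w_k\to w$, $w_k\in K$, $z_k^*\in\widehat N_K(w_k)$ defining the ordinary limiting normal cone $N_K(w)$, whence $N_D(z;w)=N_K(w)=N_{T_D(z)}(w)$ (the reverse inclusion is obtained by picking any $t_k\downarrow 0$ and using $t_kw_k\in K$). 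To get the last equality in \eqref{eqn9}, I apply exactness to the polyhedral cone $K$ now at the point $w$: locally $K=w+T_K(w)$, so $N_K(w)=N_{T_K(w)}(0)$, and \eqref{eqn7} identifies $T_K(w)=T_D^2(z;w)$, giving $N_{T_D(z)}(w)=N_{T_D^2(z;w)}(0)$.

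The main obstacle is the bookkeeping in \eqref{eqn9}: one must verify that $\widehat N_D(z+t_kw_k)=\widehat N_K(w_k)$ holds for all large $k$ (combining the exactness neighborhood with the scaling invariance $\widehat N_K(\lambda v)=\widehat N_K(v)$ for $\lambda>0$, which itself rests on $T_K(\lambda v)=T_K(v)$), and that the resulting limiting process matches verbatim the definition of $N_K(w)$ at the \emph{point} $w$ rather than a directional variant. The degenerate case $w=0$ should be handled separately, where all three formulas collapse to the familiar identities at the base point.
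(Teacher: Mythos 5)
Your proof is correct. The overall skeleton matches the paper's — both reduce $D$ to $z+T_D(z)$ via the exactness property of Proposition \ref{lemexact} and then exploit the conic structure — but the individual identities are established differently. For \eqref{eqn7} the paper decomposes $D=\bigcup_{i=1}^s D_i$ into convex polyhedral pieces, invokes \cite[Proposition 13.13]{RoWe98} on each piece, and takes unions, whereas you bypass the decomposition entirely with a direct positive-homogeneity argument ($t_kw+\tfrac12 t_k^2 s_k\in K \Leftrightarrow w+\tfrac12 t_k s_k\in K$, then reparametrize $\tau_k=t_k/2$). For \eqref{eqn8} the paper cites \cite[(3)]{GfrYe20} for $\widehat N_{T_{T_D(z)}(w)}(0)=\widehat N_{T_D(z)}(w)$, which you instead derive elementarily from $T_Q(0)=Q$ for a closed cone $Q$. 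Your treatment of \eqref{eqn9} is, if anything, slightly more careful than the paper's: the paper writes $\widehat N_D(z+t_kw_k)=\widehat N_{T_D(z)}(w_k)$ in one step, tacitly using the scaling invariance $\widehat N_K(t_kw_k)=\widehat N_K(w_k)$ that you justify explicitly via $T_K(\lambda v)=T_K(v)$; both proofs then close the last equality of \eqref{eqn9} the same way, by re-applying the polyhedral machinery to $K=T_D(z)$ itself (you via exactness of $K$ at $w$, the paper by self-applying the formula just proved with $w=0$). What the paper's route buys is brevity through existing results for convex polyhedral sets; what yours buys is a self-contained argument resting only on exactness and homogeneity, with no convexity used anywhere except through polyhedrality of $K$ in the final step. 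Your closing caution about $w=0$ is unnecessary — all of your scaling arguments go through verbatim in that degenerate case.
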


\begin{proof}
Let  $D:=\displaystyle \cup_{i=1}^s D_i$ where  each $D_i (i=1,\dots, s) $ is  convex polyhedral and $z \in
D$. By \eqref{EqRedPoly}, we get
\begin{equation}\label{EqTan}
T_D(z')=T_{z+T_D(z)}(z')=T_{T_D(z)}(z'-z) \quad \forall z'\in D\cap (z+W),
\end{equation}
{where $W$ is  an open neighborhood of 0.}
Consider a tangent direction $w\in T_D(z)$. Then, by \cite[Proposition 13.13]{RoWe98}, we have $T^2_{D_i}(z;w)=T_{T_{D_i}(z)}(w)$ whenever $z\in D_i$ and $w\in T_{D_i}(z)$. Since we have $T^2_{D_i}(z;w)=T_{T_{D_i}(z)}(w)=\emptyset$ for the remaining $i$ by definition, {we obtain (\ref{eqn7}) by}
\begin{equation}\label{EqSecOrdP}
  T^2_{D}(z;w)=\bigcup_{i=1}^s T^2_{D_i}(z;w)=\bigcup_{i=1}^sT_{T_{D_i}(z)}(w)=T_{T_D(z)}(w),
\end{equation}
{where the first and third equation is due to \cite[Proposition 3.37]{BonSh00}.}
Polarization of both side yields $(T^2_{D}(z;w))^\circ=(T_{T_D(z)}(w))^\circ=\widehat N_{T_D(z)}(w)$.
Since $T^2_{D}(z;w) = T_{T_D(z)}(w)$, we have
$$\widehat N_{T^2_D(z; w)}(0)=\widehat N_{T_{T_D(z)}(w)}(0)=\widehat{N}_{T_D(z)}(w),$$
where the last equality follows from the fact that $T_D(z)$ is a closed cone; see e.g., \cite[(3)]{GfrYe20}.
Hence (\ref{eqn8}) holds.
It remains to show (\ref{eqn9}).
For all $z'$ sufficiently close to $z$ we have $\widehat N_D(z')=\widehat N_{T_D(z)}(z'-z)$ by virtue of \eqref{EqTan}. Hence, for every $w\in T_D(z)$ we have
\begin{equation}\label{EqDirLimNormalP}
N_D(z;w)=\{z^*\mv \exists t_k\downarrow 0, w_k\to w, z_k^*\to z^* \ {\rm with} \ z_k^*\in \widehat N_D(z+ t_kw_k)=\widehat N_{T_D(z)}(w_k)\}= N_{T_D(z)}(w).\end{equation}
For $w=0$ we particularly have $N_D(z)=N_{T_D(z)}(0)$.
Since $T_D(z)$ is also polyhedral, the same formula applies
and, taking into account (\ref{EqSecOrdP}), we get
 $N_{T_D(z)}(w)=N_{T_{T_D(z)}(w)}(0)=N_{T^2_D(z; w)}(0)$. {Combining} this equation with (\ref{EqDirLimNormalP}) we obtain (\ref{eqn9}).
\end{proof}

\subsection{Variational geometry of constraint systems under metric subregularity}

Let us mention some basic facts about the tangents and the normals to a set $S$ described by constraints as $S:=g^{-1}(C) = \{x \in \R^n \mv g(x) \in C\}$,
where $g:\R^n\rightarrow \R^d$ and $ C\subset \R^d$.
We will need to use the following concept of directional metric subregularity, which we introduce only in the special case of constraint mappings.

\begin{definition}[Directional metric subregularity, {\cite[Definition 1]{Gfr13a}}] \label{directionMS}
Let $g:\R^n\rightarrow \R^d$, $C\subset \R^d$ and $\bar x \in S:=g^{-1}(C)$.
 We say that the set-valued constraint mapping
 $x \tto g(x)-C$ is metrically subregular at $(\bar{x},0)$ in direction $u \in \mathbb{R}^n$, or that the metric subregularity constraint qualification (MSCQ) holds at $\bar x$ in direction $u$,
 if there exist $\kappa, \delta, \rho>0$ such that
 \begin{equation}\label{eq:MSCQ}
 {\rm dist}(x,S) \leq \kappa \, {\rm dist}(g(x), C), \quad \forall x\in \bar{x}+V_{\delta,\rho}(u).
 \end{equation}
 The infimum of all $\kappa$ for which there are $\delta, \rho>0$ satisfying \eqref{eq:MSCQ} is called the subregularity modulus.
 In the case $u=0$, we simple say that the constraint mapping is metric subregular at $(\bar{x},0)$ or that the MSCQ holds at $\bar x$.
  \end{definition}
  If $g$ is continuously differentiable, then by \cite[Theorem 2.6]{Gfr14a}, a sufficient condition for MSCQ at $\xb$ in direction $u$ is the condition:
  \begin{equation}
  \nabla g(\bar x)^Ty^*=0,\ y^*\in  N_C(g(\bar x);\nabla g(\bar x)u)\ \Longrightarrow\ y^*=0.\label{FOSCMS}
  \end{equation}
  Asking \eqref{FOSCMS} to be satisfied for all nonzero $u \in \R^n$ corresponds to the so-called first-order sufficient condition for metric subregularity (FOSCMS), which implies MSCQ at $\xb$.
  If in addition the graph of the constraint mapping is a closed cone, then the metric subregularity holds locally if and only if it holds globally.
\begin{proposition}[{\cite[Lemma 3]{Gfr19}}] \label{closedcone}
Let $g:\R^n\rightarrow  \R^d$, $ C\subset \R^d$ and assume that $\{(x,y) \in \R^n\times \R^d \mv g(x) - y \in C\}$, the graph of the constraint mapping $x \tto g(x)-C$, is a closed cone.
Then $0 \in S$ and if MSCQ holds at $0$ then there is some $\kappa >0$ such that \eqref{eq:MSCQ} holds for all $x$.
\end{proposition}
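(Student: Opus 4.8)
The plan is to exploit the positive homogeneity forced by the conic structure of the graph and to reduce the desired global error bound to the local one supplied by MSCQ at $0$. First I would record that $0\in S$: since $\Gamma:=\{(x,y)\mv g(x)-y\in C\}$ is a nonempty closed cone (if it is empty the statement is vacuous), letting $t\downarrow 0$ along any ray $t(x,y)\in\Gamma$ and using closedness gives $(0,0)\in\Gamma$, i.e.\ $g(0)\in C$, hence $0\in S$. Next I would pass to the set-valued constraint mapping $\Phi(x):=g(x)-C$, whose graph is precisely $\Gamma$ and which satisfies $\Phi^{-1}(0)=S$ together with ${\rm dist}(0,\Phi(x))={\rm dist}(g(x),C)$.

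The heart of the argument is two homogeneity observations. Because $\Gamma=\gph\Phi$ is a cone, for every $t>0$ one has $(x,v)\in\Gamma\iff(tx,tv)\in\Gamma$, which rewritten through $\Phi$ reads $\Phi(tx)=t\,\Phi(x)$. Consequently $S=\Phi^{-1}(0)$ is itself a closed cone, and since the distance to a nonempty cone is positively homogeneous, ${\rm dist}(tx,S)=t\,{\rm dist}(x,S)$ for all $t>0$. The key --- and less obvious --- point is that the residual is homogeneous as well: even though $g$ is not homogeneous, the identity $\Phi(tx)=t\,\Phi(x)$ yields
\[{\rm dist}(g(tx),C)={\rm dist}(0,\Phi(tx))={\rm dist}(0,t\,\Phi(x))=t\,{\rm dist}(0,\Phi(x))=t\,{\rm dist}(g(x),C).\]

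With both sides of the target inequality positively homogeneous of degree one in $x$, the global bound follows by scaling into the region where the local bound is available. MSCQ at $0$ provides $\kappa,\delta,\rho>0$ with ${\rm dist}(x,S)\le\kappa\,{\rm dist}(g(x),C)$ for all $x\in V_{\delta,\rho}(0)=\delta\mathbb{B}$. Given an arbitrary $x\neq 0$, I would choose $t:=\delta/(2\norm{x})>0$ so that $\norm{tx}=\delta/2<\delta$, apply the local estimate at the point $tx\in\delta\mathbb{B}$, and divide the resulting inequality by $t$ using the two homogeneity identities to obtain ${\rm dist}(x,S)\le\kappa\,{\rm dist}(g(x),C)$; the case $x=0$ is immediate since $0\in S$. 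I expect the only genuine obstacle to be spotting the homogeneity of the residual: it is invisible at the level of $g$ and $C$ taken separately and becomes transparent only after recasting ${\rm dist}(g(x),C)$ as ${\rm dist}(0,\Phi(x))$ and transferring the cone property of $\Gamma$ to the relation $\Phi(tx)=t\,\Phi(x)$, after which the remaining scaling steps are routine.
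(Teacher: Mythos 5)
Your proof is correct. The paper itself contains no proof of Proposition \ref{closedcone} --- it is quoted directly from \cite[Lemma 3]{Gfr19} --- but your argument is precisely the standard one behind that result: establish $0\in S$ from closedness of the conic graph, recast the residual as $\dist{0,\Phi(x)}$ so that the cone property of $\gph \Phi$ yields $\Phi(tx)=t\,\Phi(x)$ and hence positive homogeneity of both $\dist{\cdot,S}$ and $\dist{g(\cdot),C}$, and then scale any point into the ball $\delta\mathbb{B}=V_{\delta,\rho}(0)$ where the MSCQ estimate applies.
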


In the following proposition, we collect the basic results about tangents, normals, and second-order tangents to set $S$.

  \begin{proposition}\label{Prop2.10}
  Let $g:\R^n\rightarrow \R^d$ be continuously differentiable, let $C\subset \R^d$ be a closed set and consider $\xb \in S:=g^{-1}(C)$.
  Suppose that the constraint mapping $x\tto g(x)-C$ is metrically subregular at $(\bar x,0)$ in direction $\bar u \in \R^n$.
  Then there is a neighborhood $U$ of $\bar u$ such that for every $u \in U$
  one has
  \begin{eqnarray}\label{eqn : T and T_T}
   T_S(\bar x) \cap U = \nabla g(\bar x)^{-1}\big(T_C(g(\bar x))\big) \cap U,
   &&
   T_{T_S(\bar x)}(u) = \nabla g(\bar x)^{-1}\big(T_{T_C(g(\bar x))}(\nabla g(\bar x)u)\big),
   \\ \label{eqn : DirLimNc}
   N_S (\bar x; u) \subset \nabla g(\bar x)^T N_C(g(\bar x);\nabla g(\bar x)u),
   &&
   N_{T_S(\bar x)}(u) \subset \nabla g(\bar x)^T N_{T_C(g(\bar x))}(\nabla g(\bar x)u).
  \end{eqnarray}
  Additionally, if $g$ is twice continuously differentiable and $u \in T_S(\bar x) \cap U$, one has
  \begin{equation*}
  T^2_S(\bar x;u)=\{p \in \R^n \mv \nabla g(\bar x)p+\nabla^2g(\bar x)(u,u)\in T^2_C(g(\bar x);\nabla g(\bar x) u)\}
  \end{equation*}
  and, denoting the subregularity modulus by $\kappa$,
\begin{eqnarray*}
\dist{p,T^2_S(\bar x;u)}\leq \kappa \dist{\nabla g(\bar x)p+\nabla^2g(\bar x)(u,u), T^2_C(g(\bar x);\nabla g(\bar x) u)}\quad  \forall p \in \R^n.
\end{eqnarray*}
Moreover, if there exists a subspace $L\subset \R^d$ such that
  \begin{equation}\label{eq : Lsubspace}
  T_{T_C(g(\xb))}(\nabla g(\xb) u)+L \subset T_{T_C(g(\xb))}(\nabla g(\xb) u) \ \mbox{ and } \ \nabla g(\xb)\R^n +L=\R^d,
  \end{equation}
  then
  \[
    \widehat{N}_{T_S(\bar x)}(u) = \nabla g(\bar x)^T \widehat N_{T_C(g(\bar x))}(\nabla g(\bar x)u).
  \]
  The subregularity assumption as well as the existence of the subspace $L$ satisfying \eqref{eq : Lsubspace} with $u = \bar u$
  are fulfilled, particularly, under the following directional nondegeneracy condition:
    \begin{equation}\label{EqDirNonDegen00}
    \nabla g(\bar x)^T y^*=0,\ y^*\in {\rm span\,} {N_C(g(\bar x);\nabla g(\bar x) \bar u)}\ \Longrightarrow\ y^*=0.
    \end{equation}
\end{proposition}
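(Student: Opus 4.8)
The plan is to treat the directional metric subregularity estimate $\dist{x,S}\le\kappa\,\dist{g(x),C}$, valid for all $x\in\xb+V_{\delta,\rho}(\bar u)$, as the single workhorse, writing $A:=\nabla g(\xb)$ and $w:=\nabla g(\xb)u$ throughout. I would shrink the neighborhood $U$ of $\bar u$ so that for every $u\in U$ and all small $t>0$ the test points $\xb+tu+\tfrac12t^2p$ stay in the directional neighborhood (when $\bar u\ne0$ a small ball around $\bar u$ lies in a narrow directional cone, so the conic test points fall into $V_{\delta,\rho}(\bar u)$; the case $\bar u=0$ is the nondirectional one). In every identity the inclusion ``$\subseteq$'' is the easy one, obtained by a Taylor expansion of $g$ along a realizing sequence and reading off membership in the matching tangent object of $C$, while ``$\supseteq$'' is where subregularity enters. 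For the first-order formula, given $u\in A^{-1}(T_C(g(\xb)))\cap U$ I pick $t_k\downarrow0$, $v_k\to w$ with $g(\xb)+t_kv_k\in C$; then $x_k:=\xb+t_ku$ satisfies $\dist{g(x_k),C}=o(t_k)$ and lies in the directional neighborhood, so subregularity yields $\hat x_k=\xb+t_ku_k\in S$ with $u_k\to u$, whence $u\in T_S(\xb)$. This gives $T_S(\xb)\cap U=A^{-1}(T_C(g(\xb)))\cap U$.

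Since the contingent cone is local and $T_S(\xb)$ agrees on $U$ with the cone $K:=A^{-1}(T_C(g(\xb)))$, I have $T_{T_S(\xb)}(u)=T_K(u)$ for $u\in U$. To evaluate $T_K(u)=A^{-1}(T_{T_C(g(\xb))}(w))$ I transfer the directional subregularity to the linearized conic system $x\tto Ax-T_C(g(\xb))$, whose graph is a closed cone: Proposition~\ref{closedcone} together with the directional MSCQ of the original system (cf.\ \cite[Theorem~2.6]{Gfr14a}) yields subregularity of this linear system at $u$ for $u$ near $\bar u$, and re-running the first-order argument for it gives the formula. The second-order tangent set uses the same device with a second-order expansion: for given $p$ set $a:=\nabla g(\xb)p+\nabla^2g(\xb)(u,u)$ and $r:=\dist{a,T^2_C(g(\xb);w)}$, choose $q\in T^2_C(g(\xb);w)$ with $\|a-q\|=r$ and a realizing sequence $g(\xb)+t_kw+\tfrac12t_k^2q_k\in C$, $q_k\to q$; then $x_k:=\xb+t_ku+\tfrac12t_k^2p$ obeys $\dist{g(x_k),C}\le\tfrac12t_k^2r+o(t_k^2)$, so subregularity produces $\hat x_k=\xb+t_ku+\tfrac12t_k^2p_k\in S$ with $\|p_k-p\|\le\kappa r+o(1)$, and a cluster point $\tilde p\in T^2_S(\xb;u)$ satisfies $\|\tilde p-p\|\le\kappa r$. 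This proves both the formula (take $r=0$) and the distance estimate at once.

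For the normal cones, the directional limiting inclusion $N_S(\xb;u)\subset A^TN_C(g(\xb);w)$ is \cite[Lemma~3]{GfrYeZh19}; the sharper $N_{T_S(\xb)}(u)\subset A^TN_{T_C(g(\xb))}(w)$ follows by applying the limiting-normal chain rule under subregularity to the linearized system $K$ at $u$, again using locality of the limiting normal cone and the MSCQ transfer above. The regular normal equality is the most delicate point. The inclusion ``$\supseteq$'' is free, since $A^Ty^*\in\widehat N_{A^{-1}(Q)}(u)$ whenever $y^*\in\widehat N_Q(Au)$ (here $Q=T_{T_C(g(\xb))}(w)$ and $Au=w\in Q$). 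For ``$\subseteq$'' I exploit the subspace $L$: since $L\subset\lin{T_{T_C(g(\xb))}(w)}$, the cone $T_{T_C(g(\xb))}(w)$ is $L$-invariant, hence the preimage of its image under the quotient $\pi:\R^d\to\R^d/L$, while $\nabla g(\xb)\R^n+L=\R^d$ makes $\pi\circ A$ surjective; applying the \emph{exact} (surjective) regular normal cone chain rule in the quotient and identifying regular normals of the $L$-invariant set with those of its projection, I recover $\widehat N_{T_S(\xb)}(u)=\nabla g(\xb)^T\widehat N_{T_C(g(\xb))}(w)$ with no convexity of $C$ and no closure to remove.

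Finally, under \eqref{EqDirNonDegen00} the implication ``$\nabla g(\xb)^Ty^*=0$, $y^*\in N_C(g(\xb);\nabla g(\xb)\bar u)\Rightarrow y^*=0$'' is weaker than nondegeneracy, so \eqref{FOSCMS} yields MSCQ in direction $\bar u$; and choosing $L:=({\rm span\,}N_C(g(\xb);\nabla g(\xb)\bar u))^\perp$, Proposition~\ref{lem3.1} gives $L=({\rm span\,}N_C(g(\xb);\nabla g(\xb)\bar u))^\circ=\lin{\widehat T_C(g(\xb);\nabla g(\xb)\bar u)}\subset\lin{T_{T_C(g(\xb))}(\nabla g(\xb)\bar u)}$, the first condition in \eqref{eq : Lsubspace}, while $(\nabla g(\xb)\R^n+L)^\perp=\ker\nabla g(\xb)^T\cap{\rm span\,}N_C(g(\xb);\nabla g(\xb)\bar u)=\{0\}$ by \eqref{EqDirNonDegen00} gives the second. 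The main obstacle I anticipate is the transfer of directional metric subregularity to the linearized conic system, since it drives the $T_{T_S(\xb)}(u)$, $T^2_S(\xb;u)$ and $N_{T_S(\xb)}(u)$ formulas at directions $u$ merely \emph{near} $\bar u$; a secondary crux is obtaining the regular normal equality without convexity, which I handle by the reduction modulo $L$ and the surjective chain rule rather than by a polarity formula, the latter failing for the nonconvex cone $T_C(g(\xb))$.
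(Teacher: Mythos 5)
Your proposal follows the same architecture as the paper's proof: uniform subregularity in all directions $u$ near $\bar u$, scaling arguments for the tangent and second-order tangent statements (which the paper instead cites from \cite[Proposition 5]{GfrYeZh19}), passage to the linearized conic system $u'\tto \nabla g(\xb)u'-T_C(g(\xb))$ to handle the objects attached to $T_S(\xb)$, and an identical treatment of the nondegeneracy part via Proposition \ref{lem3.1} and \eqref{FOSCMS}. Your first- and second-order scaling arguments are correct (the second one nicely yields the formula and the distance estimate simultaneously), and your quotient-space/surjectivity proof of the regular-normal equality is a sound, self-contained replacement for the paper's citation of \cite[Theorem 4]{GfrOut16a}.

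The genuine gap is the step you yourself flag as the main obstacle: the transfer of metric subregularity to the linearized system. Neither tool you invoke delivers it. Proposition \ref{closedcone} \emph{presupposes} MSCQ at $0$ for the conic system and only upgrades it to a global estimate; it cannot produce subregularity of the linearization from subregularity of the nonlinear system. And \cite[Theorem 2.6]{Gfr14a} goes the wrong way: it derives directional subregularity from the nondegeneracy-type condition \eqref{FOSCMS}, and \eqref{FOSCMS} for the linearized system is not a consequence of MSCQ of the nonlinear one (redundant linear constraints are subregular yet degenerate). Without the transfer, the second formula in \eqref{eqn : T and T_T}, the second inclusion in \eqref{eqn : DirLimNc}, and---since your regular-normal argument polarizes $T_{T_S(\xb)}(u)=\nabla g(\xb)^{-1}\big(T_{T_C(g(\xb))}(\nabla g(\xb)u)\big)$---the equality $\widehat N_{T_S(\xb)}(u)=\nabla g(\xb)^T\widehat N_{T_C(g(\xb))}(\nabla g(\xb)u)$ all remain unsupported. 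The paper closes exactly this step by citing \cite[Lemma 1]{GfrYeZh19}. Alternatively, you can close it by upgrading your own first-order scaling argument from membership to distances: for $u'$ in a ball around $\bar u$, set $r:=\dist{\nabla g(\xb)u',T_C(g(\xb))}$, take a nearest point $w'$ with a realizing sequence $g(\xb)+t_kw_k\in C$, $w_k\to w'$; then $\dist{g(\xb+t_ku'),C}\le t_kr+o(t_k)$, the nonlinear estimate yields $\xb+t_k\hat u_k\in S$ with $\norm{\hat u_k-u'}\le\kappa r+o(1)$, and any cluster point of $\hat u_k$ lies in $T_S(\xb)\subset\nabla g(\xb)^{-1}\big(T_C(g(\xb))\big)$, giving $\dist{u',\nabla g(\xb)^{-1}(T_C(g(\xb)))}\le\kappa r$, which is precisely subregularity of the linearized system at $(u,0)$ for every $u$ near $\bar u$. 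A minor slip besides this: the inclusion $N_S(\xb;u)\subset\nabla g(\xb)^TN_C(g(\xb);\nabla g(\xb)u)$ is not \cite[Lemma 3]{GfrYeZh19} (that lemma gives $N_{T_S(\xb)}(u)\subset N_S(\xb;u)$); the paper takes it from \cite[Theorem 3.1]{BeGfrOut19}.
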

\begin{proof}
 By Definition \ref{directionMS}, there exists a neighborhood $U$ of $\bar u$ such that $x\tto g(x)-C$ is metrically subregular
 at $\bar x$ in every direction $u \in U$ with the same modulus.
 Thus, the estimate for the directional limiting normal cone comes from \cite[Theorem 3.1]{BeGfrOut19}.
 In \cite[Proposition 5]{GfrYeZh19}, one can find
 all the statements regarding the second-order tangents
 as well as the first formula in \eqref{eqn : T and T_T},
 which means that, locally around any $u \in U$,
 set $T_S(\bar x)$ has the same pre-image structure as $S$.
 By \cite[Lemma 1]{GfrYeZh19}, however, we infer
 that the corresponding constraint mapping $u' \tto \nabla g(\bar x)u' - T_C(g(\bar x))$
 is metrically subregular at $(u,0)$ and so the remaining
 two estimates for $T_{T_S(\bar x)}(u)$ and $N_{T_S(\bar x)}(u)$
 are results of the standard, nondirectional, calculus.
 Moreover, the formula for the regular normal cone
 is from \cite[Theorem 4]{GfrOut16a}.

 Note that the nondegeneracy condition \eqref{EqDirNonDegen00}
 is clearly stronger than FOSCMS, so it obviously implies
 MSCQ at $\xb$ in direction $\bar u$.

 Let us now show that the subspace $\lin{\widehat T_C(g(\bar x);\nabla g(\bar x)\bar u)}$ satisfies \eqref{eq : Lsubspace} with $u = \bar u$.
 The first property follows immediately from Proposition \ref{lem3.1}.
 By the nondegeneracy, we get
 \begin{equation*}
\R^d=\big(\ker \nabla g(\bar x)^T\cap {\rm span\,} {N_C(g(\bar x);\nabla g(\bar x)\bar u)}\big)^\perp= \nabla g(\bar x)\R^n+\lin{\widehat T_C(g(\bar x);\nabla g(\bar x)\bar u)}
 \end{equation*}
 and \eqref{eq : Lsubspace} follows.
\end{proof}

{
For more information about the directional nondegeneracy
\eqref{EqDirNonDegen00} we refer to \cite[Section~2.4]{BeGfrOut20}, where this condition was first introduced
for convex polyhedral set $C$.
Particularly, \cite[Example~2.15]{BeGfrOut20} clarifies
that for a nonzero direction, it is a strictly milder assumption than the standard nondegeneracy \cite[Formula~4.172]{BonSh00},
which corresponds to the case $\bar u = 0$.
We will further utilize directional nondegeneracy in Sections 4 and 5 in the case of polyhedral set $C$,
showing that under \eqref{EqDirNonDegen00} all the four sets in \eqref{eqn : DirLimNc} actually coincide, see Theorem \ref{ThDirNonDegen},
and certain directional multipliers are unique, see Corollary \ref{CorDirNonDegen}.}

\subsection{Generalized support function and second subderivative}

In this final preliminary part, we recall the definitions of the lower generalized support function and state some basic properties.

 \begin{definition}[Lower generalized support function, {\cite{GfrYeZh19}}] \label{def-hat-sigma}
   Given a nonempty closed set $S\subset \R^n$ we define the {\em lower generalized support function to $S$} as the mapping $\hat\sigma_S:\R^n\to \bar \R$ by
   \[
     \hat\sigma_S(z^*):=\liminf_{\tilde z^* \to z^*}\inf_z\{\skalp{\tilde   z^*,z} \mid \tilde z^*\in N_S(z)\}=\liminf_{\tilde z^*\to z^*}\inf_z\{ \skalp{\tilde z^*,z} \mid \tilde z^*\in \widehat N_S(z)\}
     \qquad \forall z^* \in  \mathbb{R}^n.
   \]
   If $S=\emptyset$, then we define $\hat\sigma_S( z^*):=-\infty $ for all $z^*.$
 \end{definition}
 It was shown in \cite{GfrYeZh19} that in general $\hat \sigma_S(z^*)\leq \sigma_S(z^*)$ for all $z^*$ and the equality holds when $S$ is convex.
 {If $S=z+K$ is a translation of a cone $K$, we get the following formula,
 which will come in handy is Section 5.}

  \begin{proposition}\label{PropHatSigma}
    For every nonempty closed cone $K\subset \mathbb{R}^n$ (not necessarily convex) and every $z\in\R^n$ we have
    \[\hat\sigma_{z+K}(z^*)=\begin{cases}\skalp{z^*,z}&\mbox{if $z^*\in N_K(0)$,}\\
    \infty&\mbox{otherwise.}\end{cases}\]
    Particularly, ${\rm dom\,}\hat\sigma_{z+K} 
    = N_K(0)$.
 \end{proposition}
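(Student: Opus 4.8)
The plan is to reduce the computation of $\hat\sigma_{z+K}(z^*)$ to two elementary facts about the cone $K$ and then to evaluate the outer lower limit by hand. Throughout I write a generic point of the translated cone $z+K$ as $x=z+k$ with $k\in K$, and I use $\tilde z^*$ for the perturbation variable in the outer $\liminf$, exactly as in Definition \ref{def-hat-sigma}. By translation invariance of the (limiting) normal cone we have $N_{z+K}(z+k)=N_K(k)$, so the constraint $\tilde z^*\in N_{z+K}(x)$ appearing in the inner infimum becomes $\tilde z^*\in N_K(k)$, while the objective splits as $\skalp{\tilde z^*,x}=\skalp{\tilde z^*,z}+\skalp{\tilde z^*,k}$.

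The two facts I need are: (A) for every $k\in K$ and every $v\in N_K(k)$ one has $\skalp{v,k}=0$; and (B) $\bigcup_{k\in K}N_K(k)=N_K(0)$. For (A), since $K$ is a cone, $(1\pm t)k\in K$ for all small $t>0$, which shows $\pm k\in T_K(k)$; hence for $v\in\widehat N_K(k)=T_K(k)^\circ$ we get $\skalp{v,k}\le 0$ and $\skalp{v,-k}\le 0$, i.e.\ $\skalp{v,k}=0$, and passing to the limit in the defining sequences extends this to $v\in N_K(k)$. For (B), positive homogeneity of the tangent cone gives $T_K(tk)=T_K(k)$, hence $\widehat N_K(tk)=\widehat N_K(k)$ for all $t>0$; letting $t\downarrow 0$, so that $tk\to 0$ in $K$ while the normals stay fixed, yields $\widehat N_K(k)\subset N_K(0)$. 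Taking limits together with the closedness of $N_K(0)$ upgrades this to $N_K(k)\subset N_K(0)$, and the reverse inclusion is trivial because $0\in K$.

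Combining (A) and (B), the inner objective $\skalp{\tilde z^*,x}$ is constantly equal to $\skalp{\tilde z^*,z}$ on the feasible set, and that feasible set is nonempty precisely when $\tilde z^*\in\bigcup_{k}N_K(k)=N_K(0)$. Therefore the inner infimum equals $\skalp{\tilde z^*,z}$ if $\tilde z^*\in N_K(0)$ and $+\infty$ otherwise. It remains to take the outer lower limit $\liminf_{\tilde z^*\to z^*}$. If $z^*\in N_K(0)$, the constant approach $\tilde z^*\equiv z^*$ gives the value $\skalp{z^*,z}$, while continuity of the inner product shows that every finite subsequential limit of the inner infima along $\tilde z^*\to z^*$ equals $\skalp{z^*,z}$ and the $+\infty$ values cannot lower the limit, so $\hat\sigma_{z+K}(z^*)=\skalp{z^*,z}$. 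If $z^*\notin N_K(0)$, then—crucially using that the limiting normal cone $N_K(0)$ is closed—an entire neighborhood of $z^*$ misses $N_K(0)$, forcing the inner infimum to be $+\infty$ on that neighborhood and hence $\hat\sigma_{z+K}(z^*)=+\infty$. The assertion ${\rm dom}\,\hat\sigma_{z+K}=N_K(0)$ is then immediate.

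The main obstacle is fact (B) in the nonconvex case: for convex cones $N_K(k)=K^\circ\cap\{k\}^\perp\subset N_K(0)$ is standard, but for a general closed cone I must lean on the scaling identity $\widehat N_K(tk)=\widehat N_K(k)$ together with the closedness of the limiting normal cone to produce the inclusion $N_K(k)\subset N_K(0)$. The closedness of $N_K(0)$ is also exactly what makes the dichotomy in the outer $\liminf$ clean, ensuring the domain is $N_K(0)$ itself rather than merely its closure.
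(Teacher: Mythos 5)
Your proof is correct, and it reorganizes the argument in a way that differs noticeably from the paper's. The paper works with the regular-normal-cone form of Definition \ref{def-hat-sigma} and manipulates sequences directly: it takes $q_k\in K$, $z_k^*\in\widehat N_{z+K}(z+q_k)=\widehat N_K(q_k)$ realizing the lower limit, kills the $\skalp{z_k^*,q_k}$ term by orthogonality, and then rescales via $z_k^*\in\widehat N_K(\alpha_k q_k)$ with $\alpha_k=1/(k(\Vert q_k\Vert+1))$ so that $\alpha_k q_k\to 0$, which puts $z^*$ in $N_K(0)$ by the very definition of the limiting normal cone. You instead work with the limiting-normal-cone form, compute the inner infimum in closed form, and only then take the outer lower limit; the engine is the same pair of facts (orthogonality and scaling invariance of normals along rays of a cone), but you package the scaling step as the standalone identity $\bigcup_{k\in K}N_K(k)=N_K(0)$, which is a clean, reusable statement the paper never isolates. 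The trade-offs: your fact (B) and your treatment of the case $z^*\notin N_K(0)$ both require invoking closedness of $N_K(0)$ explicitly (legitimate, since limiting normal cones of closed sets are closed), whereas the paper never needs this because the ``otherwise $\infty$'' case falls out of the contrapositive of ``finite value $\Rightarrow z^*\in N_K(0)$'', with the closedness argument absorbed into the outer-limit definition; conversely, your version makes the structure of $\hat\sigma_{z+K}$ completely transparent (the inner infimum is an explicit function of $\tilde z^*$ with an exact domain), at the cost of being slightly longer. One small caveat shared by both proofs: concluding $\hat\sigma_{z+K}(z^*)\le\skalp{z^*,z}$ from the constant approach $\tilde z^*\equiv z^*$ uses the (standard, Rockafellar--Wets) convention that the lower limit at a point includes the point itself; the paper's sequence $z_k^*\to z^*$ has the same implicit reliance, so this is a reading of the definition rather than a gap.
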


 \begin{proof}
 First, note that since $K$ is assumed to be a cone, for all $q\in K$ we have
     \begin{align}\label{EqCone1}\widehat N_{z+K}(z+q)=\widehat N_K(q)= \widehat N_K(\alpha q)=\widehat N_{z+K}(z+\alpha q)\ \  \forall \alpha>0,\\
     \label{EqCone2}\skalp{z^*,q}=0\ \  \forall z^*\in \widehat N_K(q).
     \end{align}
     We shall show that $\hat\sigma_{z+K}(z^*)<\infty$ if and only if $z^*\in N_K(0)$ and in this case we have $\hat\sigma_{z+K}(z^*)=\skalp{z^*,z}$.
      If $\hat\sigma_{z+K}(z^*)<\infty$, then there exist  sequences $z^*_k\to z^*$ and $q_k\in K$ with $z^*_k\in\widehat N_{z+K}(z+q_k)$ for all $k$ such that
    $\hat \sigma_{z+K}(z^*)=\displaystyle \lim_{k\to\infty}\skalp{{z^*_k},z+q_k}.$
    By \eqref{EqCone1} we have $z^*_k\in\widehat N_{z+K}(z+q_k)=\widehat N_{K}(q_k)$ and hence
    $\skalp{z^*_k, q_k}=0$ by
    \eqref{EqCone2}. It follows that
  \[\hat \sigma_{z+K}(z^*)= \lim_{k\to\infty}\skalp{{z^*_k},z}=\skalp{z^*,z}.\]
     Moreover, by \eqref{EqCone1} we have $z^*_k\in \widehat N_K(\alpha_kq_k)$ for $\alpha_k := 1/(k(\Vert q_k\Vert+1))$.
     Taking the limit as $k$ goes to $\infty$ we obtain $z^*\in N_K(0)$.
      Conversely, let $z^*\in N_K(0)$ and consider sequences $q_k\in K$ and $z^*_k\in\widehat N_K(q_k)$ such that $q_k\to 0$ and $z^*_k\to z^*$. Then
      $z^*_k \in \widehat N_K(q_k)= \widehat N_{z+K}(z+q_k)$ by \eqref{EqCone1} and $\skalp{z^*_k,q_k}=0$ by \eqref{EqCone2}. Hence by Definition \ref{def-hat-sigma} we obtain that
     \[\hat\sigma_{z+K}(z^*)\leq \liminf_{k\to\infty}\skalp{z^*_k,z+q_k}=\skalp{z^*,z} <\infty.\]
 \end{proof}
\begin{definition}[Second subderivative, {\cite[Defintion 13.3]{RoWe98}}]
Let $\varphi: \R^n\to \bar \R$, $\varphi(z)$ be finite and $z^*\in  \mathbb{R}^n$. The second subderivative of $\varphi$ at $z$ for $z^*$  is a function defined by
 \begin{eqnarray*}
 {\rm d}^2\varphi(z;z^*)(w):= \liminf\limits_{{t\downarrow 0} \atop {w'\to w}} \frac{\varphi(z+tw')-\varphi(z)-t\langle z^*, w'\rangle}{\frac{1}{2}t^2}
 \qquad \forall w \in  \mathbb{R}^n.
\end{eqnarray*}
 \end{definition}
  According to \cite[Example 13.8]{RoWe98}, if $\varphi$ is twice differentiable at $z$ and $z^*=\nabla \varphi(z)$, one has
 \[{\rm d}^2\varphi(z;z^*)(w)=w^T \nabla \varphi^2(z)w.\]
By definition, the second subderivative of the indicator function $\delta_S$ of a set
$S$ at $z\in S$ for $z^*$ is
\begin{eqnarray}
{\rm d}^2\delta_{S}(z;z^*)(w)
 =\liminf\limits_{{t\downarrow 0} \atop w'\to w}\frac{\delta_{S}(z+tw')-\delta_{S}(z)-t\langle z^*, w' \rangle }{\frac{1}{2}t^2}
=\liminf\limits_{{t\downarrow 0, w'\to w} \atop { z+tw' \in S }}\frac{-2\langle z^*, w' \rangle }{t}. \label{indicatorf}
 \end{eqnarray}
 The second subderivative of the indicator function  is extended-real-valued and, by definition,  a function of the direction $w$. However, when dealing with second-order optimality conditions, it also makes sense to consider its dependence on $z^*$. In the following proposition we will investigate the set of all $(z^*,w)$ such that $ {\rm d}^2\delta_{S}(z;z^*)(w)$ is finite and the relationship between
${\rm d}^2\delta_{S}(z;z^*)(w)$ and the support function of the second-order tangent cone $\sigma_{T^2_S(z;w)}(z^*)$ as well as with the lower generalized support function $\hat{\sigma}_{T^2_S(z;w)}(z^*)$. It turns out that the directional proximal normal cones are useful in characterizing the points where ${\rm d}^2\delta_{S}(z;z^*)(w)$ is finite.
 \begin{proposition}\label{LemBasicSecOrdSubDer}
 Consider a closed set $S\subset\R^n$, $z \in S$ and a pair $(w,z^*)\in\R^n\times\R^n$.
 The following statements hold:
\begin{enumerate}
  \item[\rm (i)] If $w\not\in T_S(z)$ or $\skalp{z^*,w}<0$, then ${\rm d}^2\delta_S(z;z^*)(w)=\infty$.
  \item[\rm (ii)] For $w\in T_S(z)$, we have ${\rm d}^2\delta_S(z;z^*)(w)>-\infty$ if and only if $z^*\in \Np_S(z;w)$.
  \item[\rm (iii)] If ${\rm d}^2\delta_S(z;z^*)(w)$ is finite, then $z^*\in \widehat N_S^p(z;w)$.
  \item[\rm (iv)] We have
  \[
  {\rm d}^2\delta_S(z;z^*)(w)
  \leq
  -\sigma_{T^2_S(z;w)}(z^*)
  \leq
  - \hat{\sigma}_{T^2_S(z;w)}(z^*)
  \]
  if and only if $w\in T_S(z)$ and $\langle z^*, w\rangle \geq 0$ or $T^2_S(z;w)=\emptyset$.
\end{enumerate}
\end{proposition}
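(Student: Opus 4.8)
The plan is to run everything off the explicit formula \eqref{indicatorf}, writing ${\rm d}^2\delta_S(z;z^*)(w)=\liminf_{t\downarrow 0,\,w'\to w,\,z+tw'\in S}\frac{-2\skalp{z^*,w'}}{t}$, and reading each feasible point as $z'=z+tw'\in S$, so that $z'-z=tw'$ and the generic term equals $-2\skalp{z^*,z'-z}/t^2$. With this dictionary, parts (i)--(iii) are entirely about \emph{when this liminf is $+\infty$, $>-\infty$, or finite}, and they will be decided by comparing the quadratic rate $\skalp{z^*,z'-z}\le\gamma\norm{z'-z}^2$ built into the directional proximal (pre)normal cone against this term. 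Part (iv), by contrast, compares the liminf against the two support functions by feeding in the distinguished feasible points produced by the second-order tangent set.

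For (i), if $w\notin T_S(z)$ then (via the contingent-cone characterization $\liminf_{t\downarrow0}\dist{z+tw,S}/t>0$) there is $c>0$ with $z+tw'\notin S$ for all small $t$ and all $\norm{w'-w}<c$, so every term is $+\infty$; and if $\skalp{z^*,w}<0$, then $-2\skalp{z^*,w'}$ stays above a positive constant near $w$ while division by $t\downarrow0$ forces $+\infty$. For (ii), the forward direction is immediate: if $z^*\in\Np_S(z;w)$ with constants $\delta,\rho,\gamma$, then every feasible $z+tw'$ eventually lies in $z+V_{\delta,\rho}(w)$ (checked separately for $w\neq0$ and $w=0$), whence $\frac{-2\skalp{z^*,w'}}{t}\ge-2\gamma\norm{w'}^2\to-2\gamma\norm{w}^2>-\infty$. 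The converse I argue contrapositively: if $z^*\notin\Np_S(z;w)$, pick $z'_k\in S\cap(z+V_{1/k,1/k}(w))$ with $\skalp{z^*,z'_k-z}>k\norm{z'_k-z}^2$ and reparametrize $z'_k-z=t_kw'_k$ with $w'_k\to w$ so that the term $-2\skalp{z^*,z'_k-z}/t_k^2\to-\infty$. Finally (iii) follows by combining the two: finiteness forces $w\in T_S(z)$, $\skalp{z^*,w}\ge0$ by (i) and $z^*\in\Np_S(z;w)$ by (ii); since $\Np_S(z;w)\subset\{\skalp{\cdot,w}\le0\}$ by \eqref{prenormal}, we get $\skalp{z^*,w}=0$, i.e.\ $z^*\in\Np_S(z;w)\cap\{w\}^\perp=\widehat N_S^p(z;w)$.

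For (iv), the second inequality $-\sigma_{T^2_S(z;w)}(z^*)\le-\hat\sigma_{T^2_S(z;w)}(z^*)$ is the already-recorded fact $\hat\sigma_D\le\sigma_D$ (both sides $-\infty$ when $T^2_S(z;w)=\emptyset$), hence holds unconditionally, so the whole chain reduces to the first inequality ${\rm d}^2\delta_S(z;z^*)(w)\le-\sigma_{T^2_S(z;w)}(z^*)$. To prove this under the stated hypothesis I feed in the second-order tangent witnesses: for $s\in T^2_S(z;w)$ with $z+t_kw+\tfrac12 t_k^2 s_k\in S$, set $w'_k:=w+\tfrac12 t_k s_k\to w$, giving $\frac{-2\skalp{z^*,w'_k}}{t_k}=\frac{-2\skalp{z^*,w}}{t_k}-\skalp{z^*,s_k}$. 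When $\skalp{z^*,w}=0$ this tends to $-\skalp{z^*,s}$, and taking the infimum over $s$ yields precisely $-\sigma_{T^2_S(z;w)}(z^*)$; when $\skalp{z^*,w}>0$ the first term forces the liminf to $-\infty$; and when $T^2_S(z;w)=\emptyset$ the right-hand side is $+\infty$ and the inequality is automatic. For the converse, if neither alternative holds then $T^2_S(z;w)\ne\emptyset$ (so $w\in T_S(z)$) and $\skalp{z^*,w}<0$, whence (i) gives ${\rm d}^2\delta_S(z;z^*)(w)=+\infty$ while $\sigma_{T^2_S(z;w)}(z^*)>-\infty$ keeps the right-hand side finite, so the inequality fails.

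The main obstacle is the reparametrization in the converse of (ii), and specifically the case $w=0$, where $z'_k-z=t_kw'_k$ leaves the scale $t_k$ free: one must pick it so that $w'_k=(z'_k-z)/t_k\to0$ and simultaneously $-2\skalp{z^*,z'_k-z}/t_k^2\to-\infty$. Writing $r_k=\norm{z'_k-z}$ and $a_k=\skalp{z^*,z'_k-z}$, the violation $a_k>k r_k^2$ together with $a_k\le\norm{z^*}r_k$ gives $r_k^2/a_k<1/k\to0$, which is exactly what is needed to insert a scale with $r_k\ll t_k\ll\sqrt{a_k}$ (e.g.\ the geometric mean $t_k=r_k^{1/2}a_k^{1/4}$); for $w\ne0$ the natural choice $t_k=\norm{z'_k-z}/\norm{w}$ works directly and yields the clean bound $-2\norm{w}^2 k\to-\infty$. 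Beyond this, the only real care required is bookkeeping of the conventions $\sigma_\emptyset=\hat\sigma_\emptyset=-\infty$ and ``liminf over an empty index set $=+\infty$,'' which is what makes the boundary cases of (i) and (iv) come out correctly.
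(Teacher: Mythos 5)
Your proof is correct, and for parts (i)--(iii) it follows essentially the same route as the paper: (i) straight from the definitions; (ii) the forward direction by noting that feasible points $z+tw'$ with $w'\to w$ eventually lie in the directional neighborhood $z+V_{\delta,\rho}(w)$, and the converse by contraposition with a reparametrization $z'_k-z=t_kw'_k$ split into the cases $w\neq 0$ and $w=0$; (iii) by combining (i) and (ii). The only cosmetic differences are in the $w=0$ case of (ii), where the paper rescales via $\tilde t_k:=k\,t_k\norm{w_k}$ after passing to a subsequence on which the violation constant is $k^3$, while you insert the intermediate scale $t_k=r_k^{1/2}a_k^{1/4}$; both implement the same idea of choosing $t_k$ with $r_k\ll t_k\ll\sqrt{a_k}$, and your verification that $r_k/t_k\to 0$, $a_k/t_k^2\to\infty$ and $t_k\to 0$ is sound. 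Likewise, in (iii) you extract $\skalp{z^*,w}=0$ from the inclusion \eqref{prenormal} rather than from the paper's observation that $\skalp{z^*,w}>0$ would force the value $-\infty$; both are valid.

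The one place where you genuinely diverge is (iv). The paper disposes of the crucial case $w\in T_S(z)$, $\skalp{z^*,w}=0$ by citing \cite[Proposition 3.2]{MoMoSa20}, whereas you prove it from scratch: for $s\in T^2_S(z;w)$ with witnesses $z+t_kw+\tfrac12 t_k^2 s_k\in S$, the admissible pairs $(t_k,w'_k)$ with $w'_k:=w+\tfrac12 t_k s_k\to w$ give $\frac{-2\skalp{z^*,w'_k}}{t_k}=\frac{-2\skalp{z^*,w}}{t_k}-\skalp{z^*,s_k}\to-\skalp{z^*,s}$, and taking the infimum over $s\in T^2_S(z;w)$ yields ${\rm d}^2\delta_S(z;z^*)(w)\leq-\sigma_{T^2_S(z;w)}(z^*)$. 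This is correct and has the advantage of making the proposition self-contained at the cost of a few extra lines. The bookkeeping cases ($T^2_S(z;w)=\emptyset$, where both support functions are $-\infty$ by convention; $\skalp{z^*,w}>0$, where the left-hand side is $-\infty$; and the converse, where nonemptiness of $T^2_S(z;w)$ forces $w\in T_S(z)$, so failure of the condition means $\skalp{z^*,w}<0$, whence (i) gives a left-hand side of $+\infty$ against a right-hand side below $+\infty$) are handled exactly as in the paper.
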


\begin{proof}
(i) The statement follows from the definition of tangent cone and (\ref{indicatorf}).

 (ii) In order to show the if-part of the statement, let $w\in T_S(z)$ and consider $z^*\in \Np_S(z;w)$. Then we can find some 
  $\delta>0, \gamma>0$ such that
  \begin{equation}\label{Eq(11)}\skalp{z^*,z'-z}
{\leq}\gamma\norm{z'-z}^2\quad  \forall z'\in(z+V_{\delta,\delta}(w))\cap S.\end{equation}
   On the other hand, by \eqref{indicatorf}, we can also find sequences $t_k\downarrow 0, w_k\to w$ such that $z+t_kw_k\in S$ and
${\rm d}^2\delta_S(z;z^*)(w)
=\lim_{k\to\infty}\frac{-2\skalp{z^*,w_k}}{ t_k}.$
  Since $w_k\rightarrow w$, for all $k$ sufficiently large we have $t_kw_k\in V_{\delta,\delta}(w)$. By (\ref{Eq(11)}) we have $\skalp{z^*,t_kw_k}
{\leq}\gamma t_k^2\norm{w_k}^2$, from which  we obtain the desired inequality
  \[{\rm d}^2\delta_S(z;z^*)(w)=\lim_{k\to\infty}\frac{-2\skalp{z^*,w_k}}{ t_k}
{\geq} \lim_{k\to\infty}-2\gamma\norm{w_k}^2=-2\gamma\norm{w}^2>-\infty.\]
  In order to show the only if-part, assume on the contrary that $w\in T_S(z)$ and $z^*\not\in \Np_S(z;w)$.
Then there are sequences $t_k\downarrow 0$ and $w_k\to w$ such that $z_k:=z+t_kw_k\in S$ and \[\limsup_{k\to\infty}\frac{\skalp{z^*,z_k-z}}{\norm{z_k-z}^2}=\limsup_{k\to\infty}\frac{\skalp{z^*,w_k}}{t_k\norm{w_k}^2}=\infty.\]
  If $w\not=0$ the contradiction
  \[\infty =\limsup_{k\to\infty}\frac{2\skalp{z^*,w_k}}{t_k}
  \leq \limsup\limits_{{t\downarrow 0, w'\to w} \atop { z+tw'\in {S}}}\frac{2\langle z^*, w' \rangle }{t}=-{\rm d}^2\delta_S(z;z^*)(w)\]
  follows. In case when $w=0$, after possibly passing to a subsequence, we can assume that $\norm{w_k}<\frac 1k$ and $\skalp{z^*,w_k}/(t_k\norm{w_k}^2)>k^3$ holds for all $k$. Defining
  $\tilde t_k:=t_k\norm{w_k}k$, $\tilde w_k:=w_k/(k\norm{w_k})$, we have $z_k=z+\tilde t_k\tilde w_k\in S$, $\tilde t_k\downarrow 0$ and $\tilde w_k\to 0$ and therefore we  obtain once more the contradiction
  \[{\rm d}^2\delta_S(z;z^*)(0)=
  \liminf\limits_{{t\downarrow 0, w'\to 0} \atop { z+tw'\in S }}\frac{-2\langle z^*, w' \rangle }{t}\leq
{\liminf_{k\to\infty}\frac{-2\skalp{z^*,\tilde w_k}}{\tilde t_k}=}\liminf_{k\to\infty}\frac{-2\skalp{z^*,w_k}}{k^2t_k\norm{w_k}^2}\leq \liminf_{k\to\infty}-k=-\infty.\]
  The above arguments show that $z^*\in \Np_S(z;w)$.

  (iii) If ${\rm d}^2\delta_S(z;z^*)(w)$ is finite then by the statement (i), we must have $w\in T_S(z)$ and $\skalp{z^*,w}\geq 0$.
It then follows by the statement (ii) that $z^*\in \Np_S(z;w)$.
Since ${\rm d}^2\delta_S(z;z^*)(w)=-\infty$ as $\langle z^*, w\rangle>0$ by definition, we obtain $\langle z^*, w\rangle=0$.
  Thus $z^*\in \widehat N_S^p(z;w)$ and the third assertion is shown.

  (iv) According to \cite[{Proposition 6}]{GfrYeZh19}, we know $-\sigma_{T^2_S(z;w)}(z^*)\leq -\hat{\sigma}_{T^2_S(z;w)}(z^*)$ for all $z^*$.
Hence it remains to show
\begin{equation}\label{inequ-d2}
{\rm d}^2\delta_S(z;z^*)(w)\leq -\sigma_{T^2_S(z;w)}(z^*)
\end{equation}
  if and only if $w\in T_S(z)$ and $\langle z^*, w\rangle \geq 0$ or $T^2_S(z;w)=\emptyset.$
{For necessity, if $T^2_S(z;w)=\emptyset$, then $-\sigma_{T^2_S(z;w)}(z^*)=\infty$ and hence
(\ref{inequ-d2}) holds.
Let $w\in T_S(z)$.
If $\langle z^*, w\rangle > 0$, then (\ref{inequ-d2}) follows from
${\rm d}^2\delta_S(z;z^*)(w)=-\infty$, while if
$\langle z^*, w\rangle=0$, it holds by \cite[Proposition 3.2]{MoMoSa20}.
We prove the sufficiency by contradiction.
Suppose that (\ref{inequ-d2}) holds but $T^2_S(z;w)\neq \emptyset$ and either
$w\notin T_S(z)$ or $\langle z^*, w\rangle<0$. In this case we must have ${\rm d}^2\delta_S(z;z^*)(w)=\infty$ by statement (i).
On the other hand, since  $T^2_S(z;w)\neq \emptyset$, then $-\sigma_{T^2_S(z;w)}(z^*)<+\infty$.
Hence ${\rm d}^2\delta_S(z;z^*)(w)>-\sigma_{T^2_S(z;w)}(z^*)$, contradicting (\ref{inequ-d2}).}
\end{proof}

\section{Second-order optimality conditions for (GP)}
Recall the general problem
\begin{flalign*}
\begin{split}
\mbox{(GP)} \hspace{49mm} \min & \ \  f(x)\ \ \  {\rm s.t. } \ \ g(x)\in C
\end{split}&
\end{flalign*}
from the introduction. At a feasible point $\xb$ of (GP), the critical cone is defined as
    \[ \mathcal{C}(\xb):=\{u \in \R^n \mv \nabla g(\xb)u\in T_C(g(\xb)), \nabla f(\xb) u \leq 0\}\]
     and the generalized Lagrangian $L^\alpha:\R^n\times\R^m\to \R$ with $\alpha\geq 0$ is given as
     \[
	L^\alpha(x,\lambda):=\alpha f(x)+ g(x)^T \lambda,
     \]
	where for $\alpha = 1$ we get the standard Lagrangian $L:=L^1$.
    To study optimality conditions for (GP), we define various multiplier sets as follows, where $u\in \mathcal{C}(\xb)$ denotes a critical direction:
\begin{eqnarray*}
\Lambda(\xb; u)&:=&\{\lambda \in N_C(g(\xb);\nabla g(\xb) u) \mv \nabla_x L(\xb,\lambda)=0\} \mbox{ (directional M-mulipliers)},\\
\Lambda^s(\xb;  u)&:=&\{\lambda \in \widehat N_{T_C(g(\xb))}(\nabla g(\xb) u) \mv \nabla_x L(\xb,\lambda)=0 \}\mbox{ (directional S-mulipliers)},\\
\Lambda^p(\xb; u)&:=&\{\lambda \in \widehat N_C^p(g(\xb);\nabla g(\xb) u) \mv \nabla_x L(\xb,\lambda)=0\}\mbox{ (directional proximal mulipliers)}.
\end{eqnarray*}
For $u=0$ we speak of just M-, S-, and proximal multipliers, which we denote by $\Lambda(\xb):=\Lambda(\xb;0)$, $\Lambda^s(\xb):=\Lambda^s(\xb;0)$, and $\Lambda^p(\xb):=\Lambda^s(\xb;0)$, respectively.
For every $u\in \mathcal{C}(\xb)$ and every $\lambda\in \Lambda^p(\xb)\subset \widehat N^p_C(g(\xb))\subset \widehat N_C(g(\xb))=(T_C(g(\xb)))^\circ$,  we have
\[0\leq -\nabla f(\xb)u=\lambda^T\nabla g(\xb)u\leq 0\]
implying $\lambda^T\nabla g(\xb) u =0$. Hence, by virtue of Proposition \ref{PropDirProxNormalCone}, the following relations hold
$$ \Lambda^p(\xb) \subset  \Lambda^p(\xb; u) \subset  \Lambda^s(\xb; u) \subset  \Lambda(\xb; u)$$
and the inclusions become equalities provided {$C$ is convex by (\ref{equality-convex})}.
    In general we only have the inclusion $\Lambda^p(\xb) \subset \Lambda^s(\xb)$ but they are equal for many {nonconvex and nonpolyhedral sets important  in applications}; e.g., {the second-order cone complementarity set \cite{YZ17} and the semidefinite complementarity cone \cite{DSY14}.}

Recall first the following second-order necessary optimality condition for (GP).
\begin{theorem}[{\cite[Theorem 2 and Corollary 5]{GfrYeZh19}}]\label{Thm3.1}
 Let $\bar x$ be a local optimal solution for  problem (GP).
Then for every critical direction $u\in \mathcal{C}(\xb)$ the following necessary optimality conditions hold.
 \begin{itemize}
 \item[\rm (i)] Suppose the constraint mapping $x\tto g(x)-C$ is metrically subregular at $(\xb,0)$ in direction $u$. Then there exists a directional M-multiplier $\lambda \in \Lambda(\xb; u)$ such that
\begin{equation} \nabla^2_{xx}L(\bar{x},\lambda)(u,u) -\hat{\sigma}_{T_C^2(g(\xb); \nabla g(\xb)u)}(\lambda)\geq 0.\label{lambda}
\end{equation}
\item[\rm (ii)] Suppose that the directional nondegeneracy condition
$$   \nabla g(\bar x)^Ty^*=0,\ y^* \in {\rm span\,} {N_C(g(\bar x);\nabla g(\bar x)u)}\  \Longrightarrow \ y^*=0 $$
 holds. Then $\Lambda^s(\xb; u)=\Lambda(\xb; u)=\{\lambda_0\}$ is a singleton and the second-order condition
 \begin{equation*}
 \nabla^2_{xx}L(\bar{x},\lambda_0)(u,u)-\sigma_{T_C^2(g(\xb); \nabla g(\xb)u)}(\lambda_0)\geq 0\label{lambda0}
\end{equation*}
holds.
 \end{itemize}
 \end{theorem}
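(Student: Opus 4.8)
The plan is to follow the classical route for second-order necessary conditions: extract the basic inequality along second-order feasible arcs, transport it to the image space via the preimage description of the second-order tangent set in Proposition \ref{Prop2.10}, and then dualize to produce a multiplier together with a curvature term. First I would reduce to the case $\nabla f(\xb)u=0$. Under directional MSCQ at $u$, Proposition \ref{Prop2.10} gives $T_S(\xb)\cap U=\nabla g(\xb)^{-1}(T_C(g(\xb)))\cap U$ near $u$, so criticality forces $u\in T_S(\xb)$; feeding a feasible sequence $\xb+t_ku_k$ with $u_k\to u$ into local optimality and dividing by $t_k$ yields $\nabla f(\xb)u\ge 0$, which with $u\in\mathcal{C}(\xb)$ gives $\nabla f(\xb)u=0$. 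Then for every $s\in T_S^2(\xb;u)$ I pick $t_k\downarrow0$, $s_k\to s$ with $\xb+t_ku+\tfrac12 t_k^2s_k\in S$, expand $f$ to second order, and use $f(\xb+t_ku+\tfrac12 t_k^2 s_k)\ge f(\xb)$ to obtain, after dividing by $\tfrac12 t_k^2$ and passing to the limit,
\[\nabla^2 f(\xb)(u,u)+\nabla f(\xb)s\ge 0\qquad\forall s\in T_S^2(\xb;u).\]
If $\mathcal{T}:=T_C^2(g(\xb);\nabla g(\xb)u)=\emptyset$, then $T_S^2(\xb;u)=\emptyset$ by the subregularity estimate and $\hat\sigma_{\mathcal{T}}\equiv-\infty$, so the conclusion holds for any directional M-multiplier; assume henceforth $\mathcal{T}\neq\emptyset$.

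The second step transports this to the image space. Writing $a:=\nabla^2 g(\xb)(u,u)$, Proposition \ref{Prop2.10} identifies $T_S^2(\xb;u)$ with $\{p\mid \nabla g(\xb)p+a\in\mathcal{T}\}$, so the inequality reads $\nabla f(\xb)p\ge-\nabla^2 f(\xb)(u,u)$ whenever $\nabla g(\xb)p+a\in\mathcal{T}$. For any directional M-multiplier $\lambda\in\Lambda(\xb;u)$ (such $\lambda$ exist by the first-order directional theory under MSCQ), the stationarity $\nabla g(\xb)^T\lambda=-\nabla f(\xb)^T$ gives $\nabla f(\xb)p=\skalp{\lambda,a}-\skalp{\lambda,q}$ with $q=\nabla g(\xb)p+a\in\mathcal{T}$, whence
\[\sup_{q\in Q}\skalp{\lambda,q}\ \le\ \nabla^2 f(\xb)(u,u)+\skalp{\lambda,a}=\nabla^2_{xx}L(\xb,\lambda)(u,u),\]
where $Q\subseteq\mathcal{T}$ is the \emph{reachable} part $\{\nabla g(\xb)p+a\in\mathcal{T}\}$. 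The whole issue is thus how much of the support function of $\mathcal{T}$ the reachable set $Q$ recovers.

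For part (ii) nondegeneracy settles this. Nondegeneracy at $u$ makes $\nabla g(\xb)^T$ injective on ${\rm span}\,N_C(g(\xb);\nabla g(\xb)u)$, so $\Lambda^s(\xb;u)=\Lambda(\xb;u)=\{\lambda_0\}$ is the claimed singleton; it also furnishes the subspace $L=\linb{\widehat T_C(g(\xb);\nabla g(\xb)u)}$ with $\nabla g(\xb)\R^n+L=\R^d$ as in \eqref{eq : Lsubspace}, while $\mathcal{T}+L=\mathcal{T}$ by Proposition \ref{PropRegTanCone} and $\lambda_0\perp L$ by Propositions \ref{lem3.1} and \ref{polarity}. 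Hence every $q\in\mathcal{T}$ can be shifted by an element of $L$ into $Q$ without changing $\skalp{\lambda_0,q}$, giving $\sup_{q\in Q}\skalp{\lambda_0,q}=\sigma_{\mathcal{T}}(\lambda_0)$ and the displayed bound becomes $\nabla^2_{xx}L(\xb,\lambda_0)(u,u)-\sigma_{\mathcal{T}}(\lambda_0)\ge0$.

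For part (i) only directional MSCQ is available, and here lies the main obstacle: $Q$ is in general a proper subset of $\mathcal{T}$, the equality $\sup_Q\skalp{\lambda,\cdot}=\sigma_{\mathcal{T}}(\lambda)$ fails, and the support function must be replaced by the lower generalized support function $\hat\sigma_{\mathcal{T}}$. I would produce the multiplier directly from the subregularity estimate $\dist{p,T_S^2(\xb;u)}\le\kappa\,\dist{\nabla g(\xb)p+a,\mathcal{T}}$ of Proposition \ref{Prop2.10}, which turns the constrained inequality into an unconstrained exact-penalty inequality $\nabla f(\xb)p+M\,\dist{\nabla g(\xb)p+a,\mathcal{T}}\ge-\nabla^2 f(\xb)(u,u)$ for all $p$; its stationarity, read through the proximal/limiting normals of $\mathcal{T}$ that enter the subdifferential of the distance function, generates perturbed normals $\lambda_k\to\lambda$ at points near the reachable part. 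The directional normal cone estimate \eqref{eqn : DirLimNc} places the limit in $N_C(g(\xb);\nabla g(\xb)u)$, i.e. $\lambda\in\Lambda(\xb;u)$, while the liminf over these perturbed normals is precisely the defining liminf of $\hat\sigma_{\mathcal{T}}(\lambda)$, yielding $\hat\sigma_{\mathcal{T}}(\lambda)\le\sup_{q\in Q}\skalp{\lambda,q}\le\nabla^2_{xx}L(\xb,\lambda)(u,u)$. Controlling this limiting, directional extraction so that the produced $\lambda$ is simultaneously a directional M-multiplier and attains the $\hat\sigma$-bound is the delicate point; Proposition \ref{LemBasicSecOrdSubDer} and the ordering $\hat\sigma_{\mathcal{T}}\le\sigma_{\mathcal{T}}$ confirm that this is the weakest of the curvature bounds and hence the one compatible with the weakest qualification MSCQ.
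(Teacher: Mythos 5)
First, a point of reference: the paper does not prove Theorem \ref{Thm3.1} at all --- it is quoted from \cite[Theorem 2 and Corollary 5]{GfrYeZh19} --- so your attempt can only be judged on internal correctness.

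Your part (ii) is essentially sound: the primal inequality $\nabla^2 f(\xb)(u,u)+\nabla f(\xb)s\ge 0$ on $T_S^2(\xb;u)$, its transport via Proposition \ref{Prop2.10}, and the shifting of an arbitrary $q\in\mathcal{T}$ into the reachable set $Q$ by an element of $L=\lin{\widehat T_C(g(\xb);\nabla g(\xb)u)}$ (using $\mathcal{T}+L=\mathcal{T}$, $\nabla g(\xb)\R^n+L=\R^d$, $\lambda_0\perp L$) correctly recovers the full support function. One step is under-argued: injectivity of $\nabla g(\xb)^T$ on ${\rm span\,}N_C(g(\xb);\nabla g(\xb)u)$ gives only \emph{uniqueness} in $\Lambda(\xb;u)$, not the equality $\Lambda^s(\xb;u)=\Lambda(\xb;u)$, which requires showing the M-multiplier is an S-multiplier; this can be repaired by noting $-\nabla f(\xb)^T\in(T_{T_S(\xb)}(u))^\circ=\widehat N_{T_S(\xb)}(u)$ (from first-order optimality and $\nabla f(\xb)u=0$) and invoking the nondegeneracy formula $\widehat N_{T_S(\xb)}(u)=\nabla g(\xb)^T\widehat N_{T_C(g(\xb))}(\nabla g(\xb)u)$ of Proposition \ref{Prop2.10}.

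The genuine gap is in part (i), precisely at the point you yourself flag as delicate. Your penalty/Ekeland construction runs entirely at the level of the second-order tangent set: stationarity of $p\mapsto\nabla f(\xb)p+M\,\dist{\nabla g(\xb)p+a,\mathcal{T}}$ produces (limits of) Fr\'echet normals to $\mathcal{T}$ at points $q_k$, hence a vector $\lambda$ that is a limiting normal to $\mathcal{T}$. Nothing places such a $\lambda$ in $N_C(g(\xb);\nabla g(\xb)u)$: the estimate \eqref{eqn : DirLimNc} you invoke concerns $N_S(\xb;u)$, i.e.\ directional normals to the feasible set at $\xb$, and your construction never produces an element of that cone, so the appeal to it is a non sequitur. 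For a general closed nonconvex $C$ there is no inclusion of the type $N_{\mathcal{T}}(q)\subset N_C(g(\xb);\nabla g(\xb)u)$; identifications of normals of (second-order) tangent approximations with directional normals of the set itself are special to the polyhedral case (Proposition \ref{Prop2.13}), and establishing them is exactly what Sections 4--5 of the paper labor to do when $D$ is polyhedral. To get a multiplier that is \emph{simultaneously} a directional M-multiplier and satisfies the $\hat\sigma$-bound, one must generate it from normals to $C$ itself at points approaching $g(\xb)$ from direction $\nabla g(\xb)u$ (e.g.\ by penalizing the original constraint $g(x)\in C$ along a feasible sequence $\xb+t_ku+\tfrac12 t_k^2s_k$), so that membership in the directional limiting normal cone holds by its very definition; this mechanism is absent from your argument, and without it part (i) does not go through.
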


We now derive second-order sufficient optimality conditions for (GP).
We state our result in terms of the following notion introduced by Penot \cite{Penot}.
 \begin{definition}[Essential local minimizer of second order]
 A point $\bar{x}$ is said to {be} an essential local minimizer of second order for problem (GP) if $\bar x$ is feasible and there exist $\varepsilon>0$ and $\delta>0$ such that
 \[
 \max\{f(x)-f(\bar{x}), {\rm dist}(g(x),C) \}\geq \varepsilon \|x-\bar{x}\|^2, \ \ \forall x\in \mathbb{B}(\bar{x},\delta).
 \]
 \end{definition}
\begin{theorem}\label{Theorem5.9}
 Let $\bar x$ be a feasible point of problem (GP).
 Suppose that for every $u\in \mathcal{C}(\bar x ) \backslash \{0\}$ there is $\alpha\geq 0$ and $\lambda\in \R^m$ such that
 \begin{equation}\label{Alpha}
 \nabla_x L^\alpha (\bar x,\lambda)=0\end{equation}
  and
  \begin{equation}\label{alpha-sufficient}
   \nabla^2_{xx}L^\alpha(\bar{x},\lambda)(u,u)+{\rm d}^2\delta_{C}(g(\bar{x});\lambda)(\nabla g(\bar{x})u)>0.
  \end{equation}
  Then $\bar{x}$ is an essential local minimizer of second order.
 \end{theorem}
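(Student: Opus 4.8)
The plan is to argue by contradiction and reduce everything to the second-order Taylor expansion of the generalized Lagrangian $L^\alpha$. Suppose $\xb$ is \emph{not} an essential local minimizer of second order. Applying the definition with $\varepsilon=\delta=1/k$, for each $k$ there is a point $x_k\in\mathbb{B}(\xb,1/k)$ with
\[
\max\{f(x_k)-f(\xb),\ {\rm dist}(g(x_k),C)\}<\tfrac1k\norm{x_k-\xb}^2 .
\]
Since $\xb$ is feasible, the right-hand side would force a negative maximum if $x_k=\xb$, so $x_k\neq\xb$; hence I may set $t_k:=\norm{x_k-\xb}\downarrow 0$ and $u_k:=(x_k-\xb)/t_k$, and after passing to a subsequence assume $u_k\to u$ with $\norm{u}=1$, so $u\neq 0$. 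As $C$ is closed I pick projections $c_k\in C$ with $\norm{g(x_k)-c_k}={\rm dist}(g(x_k),C)<\tfrac1k t_k^2$.

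First I would check that $u$ is a critical direction. From $f(x_k)-f(\xb)<\tfrac1k t_k^2$ and the expansion $f(x_k)-f(\xb)=t_k\nabla f(\xb)u_k+o(t_k)$, dividing by $t_k$ and letting $k\to\infty$ yields $\nabla f(\xb)u\le 0$. Similarly, writing $c_k-g(\xb)=(g(x_k)-g(\xb))-(g(x_k)-c_k)=t_k\nabla g(\xb)u_k+o(t_k)$ (the projection residual being $o(t_k)$), division by $t_k$ gives $(c_k-g(\xb))/t_k\to\nabla g(\xb)u$; since $c_k\in C$ and $t_k\downarrow 0$, the definition of the tangent cone forces $\nabla g(\xb)u\in T_C(g(\xb))$. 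Thus $u\in\mathcal{C}(\xb)\setminus\{0\}$ and I may invoke the hypothesis to obtain $\alpha\ge 0$ and $\lambda$ satisfying \eqref{Alpha} and the strict inequality \eqref{alpha-sufficient}.

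The heart of the argument is the second-order expansion. Using $\nabla_x L^\alpha(\xb,\lambda)=0$,
\[
L^\alpha(x_k,\lambda)-L^\alpha(\xb,\lambda)=\tfrac12 t_k^2\,\nabla^2_{xx}L^\alpha(\xb,\lambda)(u_k,u_k)+o(t_k^2).
\]
On the other hand $L^\alpha(x_k,\lambda)-L^\alpha(\xb,\lambda)=\alpha(f(x_k)-f(\xb))+\skalp{\lambda,g(x_k)-g(\xb)}$, and I bound the right-hand side using $\alpha\ge 0$, the defining inequalities, and $\skalp{\lambda,g(x_k)-c_k}\le\norm{\lambda}\tfrac1k t_k^2$; every resulting term except $\skalp{\lambda,c_k-g(\xb)}$ is $o(t_k^2)$. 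Combining the two displays leaves
\[
\nabla^2_{xx}L^\alpha(\xb,\lambda)(u_k,u_k)+\frac{-2\skalp{\lambda,c_k-g(\xb)}}{t_k^2}\le o(1).
\]
Here I recognize $w_k':=(c_k-g(\xb))/t_k\to\nabla g(\xb)u$ with $g(\xb)+t_k w_k'=c_k\in C$, so the second summand equals $-2\skalp{\lambda,w_k'}/t_k$, precisely a term in the liminf defining ${\rm d}^2\delta_C(g(\xb);\lambda)(\nabla g(\xb)u)$ in \eqref{indicatorf}. Passing to the liminf, using $\nabla^2_{xx}L^\alpha(\xb,\lambda)(u_k,u_k)\to\nabla^2_{xx}L^\alpha(\xb,\lambda)(u,u)$ and that the liminf along this one admissible sequence dominates the second subderivative, I get
\[
\nabla^2_{xx}L^\alpha(\xb,\lambda)(u,u)+{\rm d}^2\delta_C(g(\xb);\lambda)(\nabla g(\xb)u)\le 0,
\]
contradicting \eqref{alpha-sufficient}.

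The main obstacle is the bookkeeping around the distance term: one must replace the implicit nearest feasible point by an explicit projection $c_k$, verify the projection residual is genuinely of order $o(t_k^2)$ so that it disturbs neither the criticality of $u$ nor the second-order estimate, and then identify the resulting difference quotient as an admissible sequence for the liminf defining the second subderivative. The inequality ${\rm d}^2\delta_C(g(\xb);\lambda)(\nabla g(\xb)u)\le\liminf_k(-2\skalp{\lambda,w_k'}/t_k)$ for this specific sequence is exactly what lets the (possibly nonconvex, non-parabolically-derivable) curvature of $C$ enter, and is the reason the second subderivative is the correct object here, in place of a sigma-term requiring second-order regularity.
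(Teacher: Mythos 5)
Your proof is correct and follows essentially the same argument as the paper's: a contradiction sequence $x_k$ violating the essential-local-minimizer inequality, extraction of a critical direction $u\in\mathcal{C}(\xb)\setminus\{0\}$ via points of $C$ at distance $o(t_k^2)$ from $g(x_k)$ (your projections $c_k$ play exactly the role of the paper's perturbations $g(x_k)+t_k^2 r_k$), and a comparison of the Taylor expansion of $L^\alpha$ at $\xb$ with the liminf defining ${\rm d}^2\delta_C(g(\xb);\lambda)(\nabla g(\xb)u)$ along the admissible sequence $w_k'=(c_k-g(\xb))/t_k$. The only difference is cosmetic: you rearrange the algebra before passing to the liminf, whereas the paper chains the liminf inequalities starting from the second subderivative; the quantities and estimates are the same.
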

 \begin{proof}
 By contradiction, if $\bar{x}$ is not an essential local minimizer of second order, then there exists a sequence $x_k$ converging to $\xb$ such that
\begin{eqnarray}
&&  f(x_k)-f(\bar{x})\leq o(\|x_k-\bar{x}\|^2), \label{cqg}\\
&&  {\rm dist}(g(x_k),C)\leq o(\|x_k-\bar{x}\|^2).\label{distance}
 \end{eqnarray}
 Let $t_k:=\|x_k-\bar{x}\|$ and $u_k:=(x_k-\bar{x})/t_k$.
 We assume without loss of generality that $u_k$ is converging to $u$.
 From equation \eqref{cqg} it readily follows that $\nabla f(\xb)u\leq 0$ and
  \begin{equation}\label{f-inequality}
 \liminf\limits_{k}-\frac{f(x_k)-f(\bar{x})}{\frac{1}{2}t_k^2}\geq 0.
 \end{equation}
 By (\ref{distance}), there exists {$r_k\in \R^d$} such that $\|r_k\| \to 0$ and
 $g(x_k)+t^2_k r_k\in C$.
 By Taylor's expansion, since $x_k=\bar x+t_ku_k$, we have
 \[
v_k:= \frac{g(x_k)+ t^2_kr_k - g(\xb)}{t_k} = \frac{\nabla g(\bar{x})t_ku_k +o(t_k) +  t^2_kr_k}{t_k} = \nabla g(\bar{x})u_k +\frac{o(t_k)}{t_k}{+t_kr_k} \ \to \ \nabla g(\bar{x})u.\]
Moreover, $g(\bar{x})+t_kv_k=g(x_k)+ t^2_kr_k\in C$ and so $\nabla g(\bar{x})u\in T_C(g(\xb))$ follows.
Thus $u\in \mathcal{C}(\xb)\setminus\{0\}$ and the assumption of the theorem yields the existence of $\alpha\geq 0$ and $\lambda\in\R^m$ satisfying \eqref{Alpha} and \eqref{alpha-sufficient}.
Using \eqref{indicatorf}, \eqref{f-inequality}, $\|r_k\| \to 0$, and \eqref{Alpha}, however, we obtain
 \begin{eqnarray*} 
	{\rm d}^2\delta_{C}(g(\bar{x});\lambda)(\nabla g(\bar{x})u)
	&=&
	\liminf\limits_{{t\downarrow 0, v'\to \nabla g(\bar{x})u} \atop { g(\bar{x})+tv'\in C }}\frac{-\langle \lambda, v' \rangle }{\frac{1}{2}t}
	 \leq
	\liminf\limits_{k}\frac{-t_k\langle \lambda, v_k \rangle }{\frac{1}{2}t_k^2} \\
	&=&
	\liminf\limits_{k}\frac{-\langle \lambda, g(x_k)+ t^2_kr_k-g(\bar{x})  \rangle }{\frac{1}{2}t_k^2} \\
	&\leq&
	\liminf\limits_{k}-\alpha\frac{f(x_k)-f(\bar{x})}{\frac{1}{2}t^2_k}+ \liminf\limits_{k}-\frac{\langle \lambda, g(x_k)-g(\bar{x}) \rangle }{\frac{1}{2}t^2_k}\\
	&\leq&
	\liminf\limits_{k}-\frac{L^\alpha (x_k,\lambda)-L^\alpha(\bar{x},\lambda)}{\frac{1}{2}t_k^2}\\
	&=&
	\liminf\limits_{k}-\frac{\nabla_x L^\alpha(\bar{x},\lambda)(t_ku_k)+\frac 12 \nabla^2_{xx} L^\alpha(\bar{x},\lambda)(t_ku_k,t_ku_k)+\oo(t_k^2)}{\frac{1}{2}t_k^2}\\
	&=&
	\liminf\limits_{k}-\nabla^2_{xx} L^\alpha(\bar{x},\lambda)(u_k,u_k)=-\nabla^2_{xx}L^\alpha(\bar{x},\lambda)(u,u),
 \end{eqnarray*}
which contradicts \eqref{alpha-sufficient}. This completes the proof.
\end{proof}
Additional requirements on $\lambda$ are hidden in conditions \eqref{Alpha} and \eqref{alpha-sufficient}.
\begin{proposition}
Let $\bar x$ be a feasible point of problem (GP), let $u\in \mathcal{C}(\xb)$ be a critical direction and let $\alpha\geq 0$, $\lambda\in\R^m$ satisfy conditions \eqref{Alpha} and \eqref{alpha-sufficient}.
Then $\alpha$ and $\lambda$ are not both zero and $\lambda\in \widehat{N}_C^p(g(\bar x); \nabla g(\bar x) u)$.
Particularly, if $\alpha \neq 0$, then $\tilde\lambda:=\lambda/\alpha \in\Lambda^p(\xb; u)$ and conditions \eqref{Alpha}-\eqref{alpha-sufficient} hold with $\tilde\alpha :=1$ and $\tilde\lambda$.
\end{proposition}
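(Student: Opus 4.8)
Set $w:=\nabla g(\xb)u$ and recall that membership $u\in\mathcal{C}(\xb)$ gives both $w\in T_C(g(\xb))$ and $\nabla f(\xb)u\le 0$. The plan is to read all the claimed information off the two hypotheses \eqref{Alpha} and \eqref{alpha-sufficient} via Proposition \ref{LemBasicSecOrdSubDer}. The first and key observation is that the Hessian term $\nabla^2_{xx}L^\alpha(\xb,\lambda)(u,u)$ is a finite real number, so the \emph{strict} inequality \eqref{alpha-sufficient} forces ${\rm d}^2\delta_C(g(\xb);\lambda)(w)>-\infty$; otherwise the whole left-hand side would be $-\infty$. Since $w\in T_C(g(\xb))$, Proposition \ref{LemBasicSecOrdSubDer}(ii) then yields $\lambda\in\Np_C(g(\xb);w)$.

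Next I would establish the orthogonality $\skalp{\lambda,w}=0$ needed to upgrade this to $\lambda\in\widehat N^p_C(g(\xb);w)=\Np_C(g(\xb);w)\cap\{w\}^\perp$. Contracting the first-order condition \eqref{Alpha}, i.e.\ $\alpha\nabla f(\xb)+\nabla g(\xb)^T\lambda=0$, with $u$ gives $\alpha\nabla f(\xb)u+\skalp{\lambda,w}=0$, whence $\skalp{\lambda,w}=-\alpha\nabla f(\xb)u\ge 0$ by $\alpha\ge 0$ and $\nabla f(\xb)u\le 0$. For the reverse inequality I use the sign dichotomy implicit in Proposition \ref{LemBasicSecOrdSubDer}: if $\skalp{\lambda,w}>0$, then along any sequence realizing $w\in T_C(g(\xb))$ the quotient $-2\skalp{\lambda,w'}/t$ in \eqref{indicatorf} tends to $-\infty$, forcing ${\rm d}^2\delta_C(g(\xb);\lambda)(w)=-\infty$ and contradicting the previous paragraph. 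Hence $\skalp{\lambda,w}=0$ and $\lambda\in\widehat N^p_C(g(\xb);\nabla g(\xb)u)$.

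For the statement that $\alpha$ and $\lambda$ are not both zero I argue by contradiction: if $\alpha=0$ and $\lambda=0$, then $L^0(\cdot,0)\equiv 0$ makes the Hessian term vanish, while ${\rm d}^2\delta_C(g(\xb);0)(w)=0$ because $w\in T_C(g(\xb))$ keeps the defining index set in \eqref{indicatorf} nonempty with identically zero numerator; thus \eqref{alpha-sufficient} collapses to $0>0$. For the normalization when $\alpha\ne 0$ (hence $\alpha>0$), set $\tilde\lambda:=\lambda/\alpha$ and exploit positive homogeneity in the multiplier. The identity $L^\alpha(\cdot,\lambda)=\alpha L^1(\cdot,\tilde\lambda)$ gives $\nabla_x L(\xb,\tilde\lambda)=\tfrac1\alpha\nabla_x L^\alpha(\xb,\lambda)=0$ and $\nabla^2_{xx}L^1(\xb,\tilde\lambda)(u,u)=\tfrac1\alpha\nabla^2_{xx}L^\alpha(\xb,\lambda)(u,u)$, while \eqref{indicatorf} shows ${\rm d}^2\delta_C(g(\xb);\lambda)(w)=\alpha\,{\rm d}^2\delta_C(g(\xb);\tilde\lambda)(w)$; dividing \eqref{alpha-sufficient} by $\alpha>0$ reproduces \eqref{Alpha}–\eqref{alpha-sufficient} with $\tilde\alpha=1$ and $\tilde\lambda$. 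Since $\widehat N^p_C(g(\xb);w)$ is a cone by Proposition \ref{PropDirProxNormalCone}, we get $\tilde\lambda\in\widehat N^p_C(g(\xb);w)$, and together with $\nabla_x L(\xb,\tilde\lambda)=0$ this means $\tilde\lambda\in\Lambda^p(\xb;u)$ by definition.

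I expect the only genuine subtlety to be the orthogonality step. It is tempting to invoke Proposition \ref{LemBasicSecOrdSubDer}(iii) directly, but that would require \emph{finiteness} of the second subderivative, whereas \eqref{alpha-sufficient} only guarantees it is bounded below. Bridging this gap is exactly what the sign argument combining \eqref{Alpha} with criticality accomplishes; everything else is routine positive-homogeneity bookkeeping.
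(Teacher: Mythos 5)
Your proof is correct and follows essentially the same route as the paper: finiteness of the Hessian term forces ${\rm d}^2\delta_C(g(\xb);\lambda)(\nabla g(\xb)u)>-\infty$, Proposition \ref{LemBasicSecOrdSubDer}(ii) gives $\lambda\in\Np_C(g(\xb);\nabla g(\xb)u)$, the first-order condition \eqref{Alpha} combined with criticality yields $\skalp{\lambda,\nabla g(\xb)u}=0$, and positive homogeneity handles the normalization $\tilde\lambda=\lambda/\alpha$. The only cosmetic difference is that you re-derive the inequality $\skalp{\lambda,\nabla g(\xb)u}\le 0$ directly from the definition of the second subderivative, whereas the paper simply cites the inclusion \eqref{prenormal}; these are the same underlying observation.
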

\begin{proof}
Note that $\alpha$ and $\lambda$ cannot be simultaneously zero because otherwise
  \[\nabla^2_{xx}L^\alpha(\bar{x},\lambda)(u, u)= {\rm d}^2\delta_{C}(g(\bar{x}); \lambda)(\nabla g(\bar{x})u)=0,\]
  contradicting \eqref{alpha-sufficient}. Since
   ${\rm d}^2\delta_{C}(g(\bar{x});\lambda)(\nabla g(\bar{x})u)>-\infty,$
     we conclude $\lambda\in \Np_C(g(\bar x); \nabla g(\bar x)u)$ by Proposition \ref{LemBasicSecOrdSubDer}(ii) and $\langle \lambda, \nabla g(\bar x)u \rangle\leq 0$ by (\ref{prenormal}).
Meanwhile, $\lambda$ satisfies $\nabla_xL^\alpha(\bar x,\lambda)=0$, i.e., $\alpha\nabla f(\bar x)+\nabla g(\bar x)^T{\lambda}=0$, implying
$\langle \lambda, \nabla g(\bar x)u\rangle = - \alpha\nabla f(\bar x)u \geq 0$ due to $u\in \mathcal{C}(\bar x)$.
Thus $\langle \lambda, \nabla g(\bar x)u\rangle=0$ and we get
$$\lambda \in \Np_C(g(\bar x); \nabla g(\bar x)u)\cap \{\nabla g(\bar x)u\}^\perp=\widehat{N}_C^p(g(\bar x); \nabla g(\bar x)u).$$
Since $\widehat{N}_C^p(g(\bar x); \nabla g(\bar x)u)$ is a cone, it also contains $\lambda/\alpha$ if $\alpha \neq 0$.
Thus, dividing \eqref{Alpha}-\eqref{alpha-sufficient} by $\alpha$ yields the last claim, taking into account \eqref{indicatorf}.
\end{proof}

Let us now compare the concept of essential local minimizers with the more common notion that the quadratic growth condition for (GP)  holds at $\xb$, i.e.,  there exist $\varepsilon>0$ and $\delta>0$ such that  \begin{equation} f(x)\geq  f(\bar x)+\varepsilon\|x-\bar x\|^2 \qquad \forall x\in  \mathbb{B}(\bar x,  \delta) \ \  s.t.  \ \ g(x)\in C.\label{grownew}
   \end{equation}
\begin{lemma}
  Consider the following statements:
  \begin{enumerate}
    \item[\rm (i)]$\xb$ is an essential local minimizer of second-order.
    \item[\rm (ii)]The quadratic growth condition holds at $\xb$.
  \end{enumerate}
  Then the implication (i) $\Rightarrow$ (ii) always hold. Conversely, if the constraint mapping $x\tto g(x)-C$ is metrically subregular at $(\xb,0)$ in every critical direction $u\in\mathcal{C}(\xb)\setminus\{0\}$ then the reverse implication (ii) $\Rightarrow$ (i) is also valid.
\end{lemma}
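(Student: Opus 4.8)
The plan is to prove the two implications separately, exploiting the fact that the essential local minimizer condition measures infeasibility via $\mathrm{dist}(g(x),C)$ while quadratic growth is stated only over feasible points.

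For the direction (i) $\Rightarrow$ (ii), this is the easy half: I would simply restrict the essential-minimizer inequality to feasible $x$. Indeed, if $\xb$ is an essential local minimizer of second order, then there are $\varepsilon,\delta>0$ with
\[
\max\{f(x)-f(\bar x),\ \mathrm{dist}(g(x),C)\}\geq \varepsilon\|x-\bar x\|^2
\qquad \forall x\in\mathbb{B}(\bar x,\delta).
\]
For any $x$ in this ball satisfying $g(x)\in C$ we have $\mathrm{dist}(g(x),C)=0$, so the maximum is attained by the first term, giving $f(x)-f(\bar x)\geq \varepsilon\|x-\bar x\|^2$, which is exactly \eqref{grownew}. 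No constraint qualification is needed here.

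For the converse (ii) $\Rightarrow$ (i) under directional MSCQ, the strategy is a contradiction argument parallel to the proof of Theorem \ref{Theorem5.9}. Suppose (i) fails: then there is a sequence $x_k\to\xb$ with $f(x_k)-f(\bar x)\le o(\|x_k-\bar x\|^2)$ and $\mathrm{dist}(g(x_k),C)\le o(\|x_k-\bar x\|^2)$. Setting $t_k:=\|x_k-\bar x\|$ and $u_k:=(x_k-\bar x)/t_k\to u$ (after passing to a subsequence), one shows exactly as in Theorem \ref{Theorem5.9} that $u\in\mathcal{C}(\bar x)\setminus\{0\}$ and that $\nabla g(\bar x)u\in T_C(g(\xb))$. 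The idea is now to correct $x_k$ to a genuinely feasible point using metric subregularity. Since MSCQ holds at $(\xb,0)$ in the direction $u$, and $x_k\in\bar x+V_{\delta,\rho}(u)$ for large $k$ (because $u_k\to u$ puts $x_k$ eventually inside the directional neighborhood), Definition \ref{directionMS} yields a constant $\kappa>0$ with
\[
\mathrm{dist}(x_k,S)\leq \kappa\,\mathrm{dist}(g(x_k),C)\leq \kappa\,o(\|x_k-\bar x\|^2)=o(t_k^2).
\]
Hence there are feasible points $\tilde x_k\in S=g^{-1}(C)$ with $\|\tilde x_k-x_k\|\le o(t_k^2)$, so in particular $\tilde x_k\to\bar x$ and $\|\tilde x_k-\bar x\|=t_k(1+o(1))$.

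Finally I would plug these corrected feasible points into the quadratic growth estimate. Using the Lipschitz continuity of $f$ near $\bar x$ (a consequence of $f\in C^2$) together with $\|\tilde x_k-x_k\|=o(t_k^2)$, we get
\[
f(\tilde x_k)-f(\bar x)=\bigl(f(x_k)-f(\bar x)\bigr)+\bigl(f(\tilde x_k)-f(x_k)\bigr)\leq o(t_k^2)+O(\|\tilde x_k-x_k\|)=o(t_k^2).
\]
Since $\tilde x_k$ is feasible and $\tilde x_k\to\bar x$, quadratic growth \eqref{grownew} forces $f(\tilde x_k)-f(\bar x)\ge \varepsilon\|\tilde x_k-\bar x\|^2=\varepsilon t_k^2(1+o(1))$, which contradicts $f(\tilde x_k)-f(\bar x)\le o(t_k^2)$. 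This contradiction establishes (i). The main obstacle is the bookkeeping around the directional neighborhood: one must verify that $x_k$ eventually lies in $V_{\delta,\rho}(u)$ so that the directional subregularity estimate applies. This is where the hypothesis ``in \emph{every} critical direction'' is used — each convergent subsequence $u_k\to u$ produces some critical $u$, and one invokes subregularity in that particular direction $u$; the Lipschitz correction of $f$ then transfers the smallness of the objective from $x_k$ to the feasible proxy $\tilde x_k$ without loss.
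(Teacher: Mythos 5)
Your proposal is correct and follows essentially the same route as the paper: the easy implication by restricting to feasible points, and the converse by contradiction, extracting a critical direction $u\in\mathcal{C}(\xb)\setminus\{0\}$ as in Theorem \ref{Theorem5.9}, using directional metric subregularity to replace $x_k$ by feasible points $\tilde x_k$ with $\|\tilde x_k-x_k\|=o(\|x_k-\xb\|^2)$, and then contradicting the quadratic growth condition via the local Lipschitz continuity of $f$. Your explicit verification that $x_k$ eventually lies in the directional neighborhood $V_{\delta,\rho}(u)$ is a detail the paper leaves implicit, but otherwise the two arguments coincide.
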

\begin{proof}
The validity of the implication (i) $\Rightarrow$ (ii) follows immediately from the definitions. We show the second assertion by contraposition. Assume that the quadratic growth condition and the stated constraint qualification hold and assume on the contrary that there is a sequence $x_k\to\xb$ with $\max\{f(x_k)-f(\bar{x}), {\rm dist}(g(x_k),C) \}/\norm{x_k-\xb}^2\to 0$. By passing to a subsequence we may assume that $(x_k-\xb)/\norm{x_k-\xb}$ converges to some $u$ and the arguments already employed in the proof of  Theorem \ref{Theorem5.9} show that $u\in {\mathcal C}(\xb) \setminus\{0\}$. By the assumed directional metric subregularity, there is some $\kappa>0$ such that for all $k$ sufficiently large we can find some $\tilde x_k$ with $g(\tilde x_k)\in C$ and $\norm{\tilde x_k-x_k}\leq \kappa\dist{g(x_k),C}=\oo(\norm{x_k-\xb}^2)$. Since $f$ is Lipschitz continuous in a neighborhood of $\xb$ with some constant $l$, we obtain from \eqref{grownew}
the contradiction
\begin{align*}
0&<\varepsilon\leq\liminf_{k\to\infty}\frac{f(\tilde x_k)-f(\xb)}{\norm{\tilde x_k-\xb}^2}\leq \liminf_{k\to\infty}\frac{f( x_k)-f(\xb)+l\norm{\tilde x_k-x_k}}{(\norm{ x_k-\xb}-\norm{\tilde x_k-x_k})^2}\\
&=\liminf_{k\to\infty}\frac{f( x_k)-f(\xb)+\oo(\norm{x_k-\xb}^2)}{\norm{ x_k-\xb}^2-\oo(\norm{x_k-\xb}^2)}
=\liminf_{k\to\infty}\frac{f( x_k)-f(\xb)}{\norm{ x_k-\xb}^2}\leq 0.
\end{align*}
\end{proof}
Note that Theorem \ref{Theorem5.9} improves Mohammadi et al. \cite[Proposition 7.3]{MoMoSa20} in that the set $C$ is not required to be convex and parabolically derivable, $\alpha$ can be zero, we can choose different multipliers for different critical directions  and the concept of local minimizer is stronger. For the sake of completeness we also state the following corollary.
\begin{corollary} Let $\bar x$ be a feasible point of problem (GP).
 Suppose that  for every $u\in \mathcal{C}(\bar x ) \backslash \{0\}$  there is a directional proximal multiplier $\lambda\in \Lambda^p(\xb; u)$ such that 
{\[\nabla^2_{xx}L(\xb,\lambda){(u,u)}+{\rm d}^2\delta_{C}(g(\bar{x});\lambda)(\nabla g(\bar{x})u)>0.\]}
  Then the quadratic growth condition (\ref{grownew}) holds for problem (GP).
\end{corollary}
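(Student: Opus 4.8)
The plan is to recognize this corollary as the special case $\alpha=1$ of Theorem \ref{Theorem5.9}, combined with the unconditional implication (i) $\Rightarrow$ (ii) of the Lemma preceding it; so the whole argument is a matter of matching the hypotheses rather than any genuine new estimate.

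First I would fix an arbitrary critical direction $u\in\mathcal{C}(\xb)\setminus\{0\}$ and pick the directional proximal multiplier $\lambda\in\Lambda^p(\xb;u)$ supplied by the assumption. Unwinding the definition of $\Lambda^p(\xb;u)$, namely $\Lambda^p(\xb;u)=\{\lambda\in\widehat N_C^p(g(\xb);\nabla g(\xb)u)\mid \nabla_xL(\xb,\lambda)=0\}$, this $\lambda$ automatically satisfies $\nabla_xL(\xb,\lambda)=0$. I would then set $\alpha:=1$ and exploit the identity $L^1=L$: on the one hand $\nabla_xL^1(\xb,\lambda)=\nabla_xL(\xb,\lambda)=0$, which is exactly condition \eqref{Alpha}; on the other hand $\nabla^2_{xx}L^1(\xb,\lambda)(u,u)=\nabla^2_{xx}L(\xb,\lambda)(u,u)$, so the second-order inequality \eqref{alpha-sufficient} collapses verbatim to the hypothesis $\nabla^2_{xx}L(\xb,\lambda)(u,u)+{\rm d}^2\delta_C(g(\xb);\lambda)(\nabla g(\xb)u)>0$. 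Hence, for every nonzero critical direction, the pair $(\alpha,\lambda)=(1,\lambda)$ verifies both \eqref{Alpha} and \eqref{alpha-sufficient}.

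Since $u$ was arbitrary, the full hypothesis of Theorem \ref{Theorem5.9} is met, and the theorem yields that $\xb$ is an essential local minimizer of second order. Finally I would invoke the implication (i) $\Rightarrow$ (ii) of the preceding Lemma — which holds with no constraint qualification — to upgrade this to the quadratic growth condition \eqref{grownew}, completing the proof.

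There is essentially no hard step here: the only thing to check is the bookkeeping that a directional proximal multiplier is precisely the $\alpha=1$ instance of the more general multipliers allowed in Theorem \ref{Theorem5.9}. It is worth remarking that this corollary is strictly weaker than Theorem \ref{Theorem5.9} in two respects — it restricts attention to proximal multipliers (forcing $\alpha=1$), and it concludes only quadratic growth rather than the stronger essential-local-minimizer property — so it is included chiefly to make contact with the more classical formulations of second-order sufficient conditions in the literature.
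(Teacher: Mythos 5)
Your proposal is correct and is precisely the argument the paper intends: the corollary is stated without proof as an immediate consequence of Theorem \ref{Theorem5.9} (taking $\alpha=1$, so that a directional proximal multiplier $\lambda\in\Lambda^p(\xb;u)$ supplies both \eqref{Alpha} and \eqref{alpha-sufficient}) combined with the unconditional implication (i) $\Rightarrow$ (ii) of the preceding Lemma. Your bookkeeping of the definitions and your closing remark on why the corollary is weaker than the theorem are both accurate.
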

\section{First-order variational analysis of disjunctive systems}

In this section, we begin with first-order variational analysis of the disjunctive system $\Gamma:=\{x \in \R^n \mv {G}(x)\in {D}\}$,
where ${G}:\R^n \to \R^d$ is continuously differentiable and ${D} \subset \R^d$ is polyhedral.
Note that many first-order results are valid for any closed set $D$ and were already stated in Proposition \ref{Prop2.10}.
In the following theorem, we show that  since $D$ is polyhedral,
some results from Proposition \ref{Prop2.10} can be improved.
Namely, under the directional nondegeneracy \eqref{EqDirNonDegen00}
the inclusions in \eqref{eqn : DirLimNc} become equalities,
making all four sets equal.
Note also that \cite[Theorem 6.14]{RoWe98} cannot be applied directly since ${D}$ is not regular in the sense of Clarke (cf. \cite[Definition 6.4]{RoWe98}).

\begin{theorem}\label{ThDirNonDegen}
   Consider a feasible point $\bar x\in \Gamma$ and a direction $u \in \R^n$
    and assume that {the directional nondegeneracy condition}
    \begin{equation}\label{EqDirNonDegen1}
    \nabla {G}(\bar x)^Ty^*=0,\ y^*\in {\rm span\,} {N_{D}({G}(\bar x);\nabla {G}(\bar x)u)}\ \Longrightarrow\ y^*=0
    \end{equation}
    is fulfilled. Then
    \begin{equation}\label{Eq(13-14)}
     N_\Gamma(\bar x;u)
     =
     \nabla {G}(\bar x)^TN_{D}({G}(\bar x);\nabla {G}(\bar x)u)
     =
     \nabla {G}(\bar x)^TN_{T_{D}({G}(\bar x))}(\nabla {G}(\bar x)u)
     =
     N_{T_\Gamma(\bar x)}(u).
    \end{equation}
    \if{
    \begin{align}
    \widehat N_{T_\Gamma(\bar x)}(u)&= \nabla {G}(\bar x)^T\widehat N_{T_{D}({G}(\bar x))}(\nabla {G}(\bar x)u),\label{Eq(13)} \\
    N_\Gamma(\bar x;u)&=\nabla {G}(\bar x)^TN_{D}({G}(\bar x);\nabla {G}(\bar x)u).\label{Eq(14)}
    \end{align}
    }\fi
\end{theorem}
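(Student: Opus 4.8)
The goal is to establish the chain of four equalities in \eqref{Eq(13-14)}. Looking at the structure, two of these relations are essentially known: the middle equality $\nabla G(\bar x)^T N_D(G(\bar x); \nabla G(\bar x)u) = \nabla G(\bar x)^T N_{T_D(G(\bar x))}(\nabla G(\bar x)u)$ follows immediately from Proposition \ref{Prop2.13} (specifically from \eqref{eqn9}, which gives $N_D(z;w) = N_{T_D(z)}(w)$ for polyhedral $D$). So the real work is to upgrade the two inclusions appearing in \eqref{eqn : DirLimNc} of Proposition \ref{Prop2.10} to equalities. That is, I need to prove the reverse inclusions

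$$\nabla G(\bar x)^T N_D(G(\bar x); \nabla G(\bar x)u) \subset N_\Gamma(\bar x; u), \qquad \nabla G(\bar x)^T N_{T_D(G(\bar x))}(\nabla G(\bar x)u) \subset N_{T_\Gamma(\bar x)}(u),$$

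since Proposition \ref{Prop2.10} already supplies the forward inclusions (note that nondegeneracy \eqref{EqDirNonDegen1} is stronger than FOSCMS, so MSCQ holds at $\bar x$ in direction $u$, and the proposition applies).

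**The main plan.**

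My plan is to prove this via the regular-normal-cone formula at the end of Proposition \ref{Prop2.10}. Under nondegeneracy \eqref{EqDirNonDegen1}, that proposition already guarantees the exact formula $\widehat N_{T_\Gamma(\bar x)}(u) = \nabla G(\bar x)^T \widehat N_{T_D(G(\bar x))}(\nabla G(\bar x)u)$ (the subspace $L := \lin{\widehat T_D(G(\bar x); \nabla G(\bar x)u)}$ verifies \eqref{eq : Lsubspace}, as shown in that proof). The strategy is then to lift this exact \emph{regular} normal cone equality to the \emph{limiting} directional normal cone by a limiting argument. Concretely, I would take $y^* \in N_D(G(\bar x); \nabla G(\bar x)u)$ and, using the polyhedrality via \eqref{EqDirLimNormalP} in Proposition \ref{Prop2.13}, realize $y^*$ through sequences $t_k \downarrow 0$, $w_k \to \nabla G(\bar x)u$ with $y_k^* \in \widehat N_D(G(\bar x) + t_k w_k) = \widehat N_{T_D(G(\bar x))}(w_k)$ and $y_k^* \to y^*$. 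The aim is to produce matching directions $u_k \to u$ in $\R^n$ with $\bar x + t_k u_k \in \Gamma$ and regular normals $\nabla G(\bar x)^T y_k^* \in \widehat N_\Gamma(\bar x + t_k u_k)$, so that passing to the limit yields $\nabla G(\bar x)^T y^* \in N_\Gamma(\bar x; u)$ by definition of the directional limiting normal cone.

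**The crux and how I would handle it.**

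The main obstacle is constructing the feasible sequence $\bar x + t_k u_k \in \Gamma$ together with the regular normal certification at those perturbed points — this is exactly where metric subregularity and polyhedrality must interact. Here is where I would lean on the pre-image structure of the tangent cone from \eqref{eqn : T and T_T}: since $D$ is polyhedral, locally around $G(\bar x)$ the set $D$ coincides with $G(\bar x) + T_D(G(\bar x))$ by \eqref{EqRedPoly}, and correspondingly $\Gamma$ near $\bar x$ has the tangent-cone pre-image structure. I would use the metric subregularity (with uniform modulus $\kappa$ on a directional neighborhood of $u$) to project the points $G(\bar x) + t_k w_k$ back to feasibility in $\Gamma$, controlling the error so that the projected directions still converge to $u$. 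An alternative, and perhaps cleaner, route is to exploit that the graph of the relevant cone constraint mapping $u' \tto \nabla G(\bar x)u' - T_D(G(\bar x))$ is itself a closed cone and is metrically subregular (as invoked in the proof of Proposition \ref{Prop2.10} via \cite[Lemma 1]{GfrYeZh19}); combined with \eqref{eqn9} identifying $N_{T_D(G(\bar x))}(w) = N_{T^2_D(G(\bar x); w)}(0)$, this reduces the directional statement at $\bar x$ for $\Gamma$ to a purely conical, nondirectional normal-cone computation for $T_\Gamma(\bar x) = \nabla G(\bar x)^{-1}(T_D(G(\bar x)))$, where the reverse inclusion $\nabla G(\bar x)^T N_{T_D(G(\bar x))}(w) \subset N_{T_\Gamma(\bar x)}(w)$ can be obtained from the exact regular-normal formula together with the fact that, under nondegeneracy and polyhedrality, the limiting and regular directional normal cones to the polyhedral tangent set $T_D(G(\bar x))$ coincide on the relevant faces. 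I expect the delicate bookkeeping to be verifying that the chosen sequences stay inside the directional neighborhood $V_{\delta,\rho}(u)$ so that \eqref{eq:MSCQ} may be applied, and that all four sets then close up into the single common value asserted in \eqref{Eq(13-14)}.
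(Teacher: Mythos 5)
Your reduction is the same as the paper's and it is correct as far as it goes: the middle equality in \eqref{Eq(13-14)} is \eqref{eqn9} of Proposition \ref{Prop2.13}, the forward inclusions come from Proposition \ref{Prop2.10} (nondegeneracy implies FOSCMS, hence directional MSCQ), and everything reduces to reverse inclusions; in fact the single inclusion $\nabla G(\bar x)^T N_{T_D(G(\bar x))}(\nabla G(\bar x)u)\subset N_{T_\Gamma(\bar x)}(u)$ already closes the whole chain, since $N_{T_\Gamma(\bar x)}(u)\subset N_\Gamma(\bar x;u)$ holds by \cite[Lemma 3]{GfrYeZh19}. Your overall limiting strategy --- realize a limiting normal $y^*$ through regular normals $y_k^*\in\widehat N_{T_D(G(\bar x))}(v_k)$ at points $v_k\to\nabla G(\bar x)u$, certify $\nabla G(\bar x)^Ty_k^*$ as a regular normal to the constraint set at nearby points $u_k\to u$, and pass to the limit --- is also the paper's. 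But the step you yourself flag as ``the main obstacle'' is exactly the heart of the paper's proof, and neither of your two proposed mechanisms resolves it. The difficulty is that $v_k$ need not lie in $\nabla G(\bar x)u+\nabla G(\bar x)\R^n$, so a priori there is no $u_k$ whose image $\nabla G(\bar x)u_k$ carries the normal $y_k^*$. The paper handles this algebraically: nondegeneracy \eqref{EqDirNonDegen1} together with Proposition \ref{lem3.1} gives $\R^d=\nabla G(\bar x)\R^n+\lin{T_{T_D(G(\bar x))}(\nabla G(\bar x)u)}$, so one can write $v_k-\nabla G(\bar x)u=\nabla G(\bar x)d_k+r_k$ with $r_k$ in the lineality space and $\norm{d_k}+\norm{r_k}\le\beta\norm{v_k-\nabla G(\bar x)u}\to 0$; then polyhedrality enters twice, through the exactness property \eqref{EqRedPoly} and the lineality invariance \eqref{EqConsequLin}, to show that subtracting $r_k$ does not change the regular normal cone, i.e.\ $\widehat N_{T_D(G(\bar x))}(v_k)=\widehat N_{T_D(G(\bar x))}(\nabla G(\bar x)u_k)$ for $u_k:=u+d_k$; finally \cite[Theorem 6.14]{RoWe98} applied at $u_k$ (a point of $T_\Gamma(\bar x)$ by \eqref{eqn : T and T_T}) and a passage to the limit finish the proof.

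Concretely, here is why your two substitutes fail. The metric-subregularity projection produces feasible points $x_k\in\Gamma$ near $\bar x+t_ku$, but $G(x_k)$ is then merely some point of $D$ close to $G(\bar x)+t_kw_k$, generally lying in a different (typically larger-dimensional) face; hence $y_k^*$ need not belong to $\widehat N_D(G(x_k))$, and the normal vector does not survive the projection --- you get a feasible sequence with no attached normals. Your alternative claim that ``the limiting and regular directional normal cones to the polyhedral tangent set coincide on the relevant faces'' is not the actual issue and is false in the needed generality for nonconvex polyhedral sets (where several convex pieces meet, the limiting normal cone is strictly larger than the regular one); what must be proved is that the base point $v_k$ can be moved into the affine set $\nabla G(\bar x)u+\nabla G(\bar x)\R^n$ without changing the regular normal cone, and that is precisely the decomposition-plus-lineality argument missing from your proposal. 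Note also that the paper never constructs feasible points of $\Gamma$ at all: it works entirely with the cones $T_D(G(\bar x))$ and $T_\Gamma(\bar x)$ and the linear map $\nabla G(\bar x)$, which sidesteps all Taylor-expansion bookkeeping in your primary plan.
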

\begin{proof}
{First note that condition (\ref{EqDirNonDegen1}) implies FOSCMS (\ref{FOSCMS}) and hence $x \tto G(x)-D$ is metrically subregular at $(\bar{x},0)$ in direction $u$.}  From \cite[Lemma 3]{GfrYeZh19}, Proposition \ref{Prop2.10}, and  Proposition \ref{Prop2.13}, respectively, we know
\[
N_{T_\Gamma(\bar x)}(u)
\subset
N_\Gamma(\bar x;u)
\subset
\nabla {G}(\bar x)^TN_{D}({G}(\bar x);\nabla {G}(\bar x)u)
=
\nabla {G}(\bar x)^TN_{T_{D}({G}(\bar x))}(\nabla {G}(\bar x)u).
\]
Thus, it remains to show
$\nabla {G}(\bar x)^TN_{T_{D}({G}(\bar x))}(\nabla {G}(\bar x)u)
\subset
N_{T_\Gamma(\bar x)}(u)$.
Consider $y^*\in N_{T_{D}({G}(\bar x))}(\nabla {G}(\bar x)u)$
together with sequences
$v_k \in T_{D}({G}(\bar x))$ and $y^*_k \in \widehat N_{T_{D}({G}(\bar x))}(v_k)$
with $v_k \to \nabla {G}(\bar x)u$ and $y^*_k \to y^*$.
The nondegeneracy condition and Proposition \ref{lem3.1} yield
\begin{equation*}
  \R^d
  =
  \big(\ker \nabla {G}(\bar x)^T\cap {\rm span\,}{ N_{D}({G}(\bar x); \nabla {G}(\bar x)u)
}\big)^\perp
=
\nabla {G}(\bar x)\R^n+ \lin{T_{T_{D}({G}(\bar x))}(\nabla {G}(\bar x)u)}.
 \end{equation*}
{Consider the $d\times(n+d)$ matrix $A:=\big(\nabla G(\xb)\,\vdots\,P\big)$, where $P$ is the symmetric $d\times d$ matrix representing the orthogonal projection onto $\lin{T_{T_{D}({G}(\bar x))}(\nabla {G}(\bar x)u)}$. By the equation above, $A$ has full row rank $d$ and therefore the pseudo-inverse of $A$ is given by $A^\dag:=A^T(AA^T)^{-1}$. It follows that 
\[\myvec{d_k\\r_k}:=A^\dag(v_k - \nabla {G}(\bar x)u)\to\myvec{0\\0}\]
and $v_k- \nabla {G}(\bar x)u = \nabla {G}(\bar x)d_k + Pr_k$ for all $k$. Since the pseudo-inverse computes the minimum norm solution to a linear system and $\norm{Pr_k}\leq\norm{r_k}$, we conclude $r_k=Pr_k\in \lin{T_{T_{D}({G}(\bar x))}(\nabla {G}(\bar x)u)}$. Hence, $u_k:=u +d_k \to u$ and $\nabla {G}(\bar x)u_k = v_k -r_k\to \nabla {G}(\bar x)u$ follows.
}

\if{ {\color{red} Matus: If we do it this way, we may erase the above equalities, right? But maybe we can rather refer to the Hofmann lemma (or explain the linear algebra argument) and keep the original proof.}
 {\color{blue}Notice that the assumption (\ref{EqDirNonDegen1}) ensures the metric regularity of the mapping $\widetilde{M}(\xi):=\nabla G(\bar x)\xi-({\rm span}N_D(G(\bar x); \nabla G(\bar x)u))^\perp$ at $(u,\nabla G(\bar x)u)$ by Mordukhovich criterion (cf. e.g., \cite[Example 9.44]{RoWe98}). Hence there exists $\beta>0$ such that
 \begin{equation}\label{add-beta}
 {\rm dist}(u, \widetilde{M}^{-1}(v_k))\leq \beta {\rm dist} (v_k, \widetilde{M}(u))\leq \beta \|v_k-\nabla G(\bar x)u\|.
 \end{equation}
 Let $u_k$ be the projection of $u$ to $\widetilde{M}^{-1}(v_k)$. Then there exists $r_k\in ({\rm span}N_D(G(\bar x); \nabla G(\bar x)u))^\perp$ satisfying
 $v_k=\nabla G(\bar x)u_k-r_k$. According to Proposition \ref{lem3.1}, $r_k\in \lin{T_{T_D(G(\bar x))}(\nabla G(\bar x)u)}$. So (\ref{add-beta}) takes the form
 $\|u-u_k\|\leq \beta \|v_k-\nabla G(\bar x)u\|$, implying $u_k\to u$.}
}\fi

We obtain
 \begin{eqnarray}
 y^*_k & \in & \widehat N_{T_{D}({G}(\bar x))}(v_k) =\widehat N_{T_{T_{D}({G}(\bar x))}(\nabla {G}(\bar x)u)}(v_k-\nabla {G}(\bar x)u) \nonumber\\
 &=& \widehat N_{T_{T_{D}({G}(\bar x))}(\nabla {G}(\bar x)u)}(v_k-\nabla {G}(\bar x)u {- r_k})=\widehat N_{T_{D}({G}(\bar x))}(v_k {-r_k}) = \widehat N_{T_{D}({G}(\bar x))}(\nabla {G}(\bar x)u_k), \label{add-beta-1}
\end{eqnarray}
 where {the first and the third equalities follow from \eqref{EqRedPoly} and the second equality comes from (\ref{EqConsequLin}).}
Moreover, since $\widehat N_{T_{D}({G}(\bar x))}(\nabla {G}(\bar x)u_k) \neq \emptyset$, we must have $\nabla {G}(\bar x)u_k \in T_{D}({G}(\bar x))$.
Thus, taking into account $u_k \to u$, from \eqref{eqn : T and T_T} we get $u_k \in T_{\Gamma}(\bar x)$ for sufficiently large $k$ and $\nabla {G}(\bar x)^Ty^*_k \in \widehat N_{T_{\Gamma}(\bar x)}(u_k)$ follows {by (\ref{add-beta-1})} and \cite[Theorem 6.14]{RoWe98}.
Taking limits as $k\rightarrow \infty$ we conclude $\nabla {G}(\bar x)^Ty^*\in N_{T_{\Gamma}(\bar x)}(u)$, proving $\nabla {G}(\bar x)^TN_{T_{D}({G}(\bar x))}(\nabla {G}(\bar x)u) \subset N_{T_\Gamma(\bar x)}(u)$.
\end{proof}

\section{Second-order variational analysis of disjunctive systems}

In this section, we continue with second-order variational analysis
of the disjunctive system
$\Gamma:=\{x \in \R^n \mv {G}(x)\in {D}\},$
where ${G}$ is now twice continuously differentiable and ${D}$ is polyhedral.
{We will study the domain of the support functions and the connection between the second-order objects $ {\rm d}^2\delta_\Gamma(\bar x;\cdot)(u),\sigma_{T^2_\Gamma(\bar x;u)}(\cdot),\hat\sigma_{T^2_\Gamma(\bar x;u)}(\cdot)$ and the second derivative $\nabla^2 {G}(\bar x)(u,u)$.}
{In the first part, we provide very general results, assuming only that the constraint mapping $x\tto {G}(x)-{D}$ is metrically subregular at $(\xb,0)$ in direction $u$ (MSCQ holds at $\xb$ in direction $u$), and in the second part, we show how everything gets simpler under the directional nondegeneracy
\eqref{EqDirNonDegen00} or its relaxation \eqref{EqSecOrdCRCQ}.}

Given a feasible point $\bar x\in\Gamma$ and a pair $(u,x^*) \in \R^n \times \R^n$,
we denote the set of S- and M- multipliers associated with $(\bar x, x^*)$ in direction $u$, respectively, by
\begin{align*}
\Lambda^s_{x^*} (\bar x;u):=\{y^* \in \widehat{N}_{T_{D}({G}(\xb))}( \nabla {G}(\xb)u)\mv x^*=\nabla {G}(\bar x)^Ty^*\},\\
\Lambda_{x^*} (\bar x;u):=\{y^*\in N_{T_{D}({G}(\bar x))}(\nabla {G}(\bar x)u) \mv x^*=\nabla {G}(\bar x)^Ty^*\}.
 \end{align*}

Consider a direction $u$ belonging to the linearization cone $L_\Gamma(\xb)$ defined by
\[ L_\Gamma(\bar x):=\{u \in \R^n \mv\nabla G(\bar x)u\in T_D(G(\bar x))\}.\]
If {MSCQ holds at $\xb$ in direction $u$, then $u\in T_\Gamma(\xb)$ by Proposition \ref{Prop2.10} and we also get
\begin{equation} \label{EqSecOrdTanGamma2}
T^2_\Gamma(\bar x;u)
=
\{p \in \R^n \mv\nabla {G}(\bar x)p+\nabla^2{G}(\bar x)(u,u)\in T_{T_{D}({G}(\bar x))}(\nabla {G}(\bar x)u)\}
\end{equation}
from Propositions \ref{Prop2.13} and \ref{Prop2.10} (see also \cite{Meh20,MoMoSa20}).}
{\subsection{Subregular systems}}
{We begin with the main result of this section.}

\begin{theorem}\label{ThSOVAGamma}
Let $\bar x\in \Gamma$ and $u \in L_\Gamma(\bar x)$
and suppose that the MSCQ holds at $\xb$ in direction $u$
with the subregularity modulus $\kappa$.
Then the following statements hold:
\begin{enumerate}
\item[\rm (i)] For every $x^*\in \{u\}^\perp$ we have
  \begin{equation}\label{EqSecOrdSubDerGamma}
    {\rm d}^2\delta_\Gamma(\bar x;x^*)(u)=-\sigma_{T^2_\Gamma(\bar x;u)}(x^*).
  \end{equation}
\item[\rm (ii)] We have $\dom \sigma_{T^2_\Gamma(\bar x;u)}=\widehat N^p_\Gamma(\bar x;u) = \widehat N_{T_\Gamma(\bar x)}(u)$. For every
$x^* \in \dom \sigma_{T^2_\Gamma(\bar x;u)}$ {we have $x^*\in \{u\}^\perp$,}  the equality
\eqref{EqSecOrdSubDerGamma} holds and
\begin{equation}\label{EqBndSecSubDeriv1}
  \inf_{y^*\in \Lambda_{x^*} (\bar x;u)\cap \kappa \|x^*\|{\rm cl\,}\mathbb{B}}\skalp{y^*,\nabla^2 {G}(\bar x)(u,u)}\leq {\rm d}^2\delta_\Gamma(\bar x;x^*)(u)\leq
  \sup_{y^*\in  \Lambda_{x^*} (\bar x;u)\cap \kappa \|x^*\|{\rm cl\,}\mathbb{B}}\skalp{y^*,\nabla^2 {G}(\bar x)(u,u)},
  \end{equation}
  \begin{equation}\label{EqBndSecSubDeriv2}
{\rm d}^2\delta_\Gamma(\bar x;x^*)(u) \geq \sup_{y^*\in \Lambda^s_{x^*} (\bar x;u)}\skalp{y^*,\nabla^2 {G}(\bar x)(u,u)}
,
  \end{equation}
  and, moreover, there exists $y^*\in \Lambda_{x^*} (\bar x;u)$ such that
${\rm d}^2\delta_\Gamma(\bar x;x^*)(u)=\skalp{y^*,\nabla^2 {G}(\bar x)(u,u)}$.
\item[\rm (iii)] We have
$\dom \hat\sigma_{T^2_\Gamma(\bar x;u)}\subset \{x^*\,|\,  \Lambda_{x^*} (\bar x;u)\not=\emptyset\}$ and for every
$x^* \in \dom \hat\sigma_{T^2_\Gamma(\bar x;u)}$ it holds
  \begin{equation}\label{EqLowerSuppGamma}
   \inf_{y^*\in \Lambda_{x^*} (\bar x;u)\cap \kappa \|x^*\|{\rm cl\,}\mathbb{B}}\skalp{y^*,\nabla^2 {G}(\bar x)(u,u)}
   \leq
   -\hat \sigma_{T^2_\Gamma(\bar x;u)}(x^*)
   \leq
   \sup_{y^*\in \Lambda_{x^*} (\bar x;u)\cap \kappa \|x^*\|{\rm cl\,}\mathbb{B}}\skalp{y^*,\nabla^2 {G}(\bar x)(u,u)}
  \end{equation}
  and there exists $y^*\in \Lambda_{x^*} (\bar x;u)$ such that
  $ -\hat \sigma_{T^2_\Gamma(\bar x;u)}(x^*)=\skalp{y^*,\nabla^2 {G}(\bar x)(u,u)}$.
 Moreover, the upper bound in \eqref{EqLowerSuppGamma} is valid for all $x^* \in \R^n$.
 \end{enumerate}
\end{theorem}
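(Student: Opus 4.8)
The plan is to exploit the exact preimage description \eqref{EqSecOrdTanGamma2}. Throughout write $A:=\nabla G(\bar x)$, $\bar w:=\nabla G(\bar x)u$, $Q:=\nabla^2 G(\bar x)(u,u)$ and $K:=T_{T_D(G(\bar x))}(\bar w)$, so that $T^2_\Gamma(\bar x;u)=\{p\mv Ap+Q\in K\}$ and, by Proposition \ref{Prop2.10}, $u\in T_\Gamma(\bar x)$ together with $\dist{p,T^2_\Gamma(\bar x;u)}\le\kappa\,\dist{Ap+Q,K}$ for all $p$. This estimate already forces $T^2_\Gamma(\bar x;u)\ne\emptyset$, since otherwise its left-hand side is $+\infty$ while the right-hand side stays finite. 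As $D$ is polyhedral, $K$ decomposes into a finite union $K=\bigcup_iK_i$ of convex polyhedral cones, each $K_i=T_{T_i}(\bar w)$ for a polyhedral piece $T_i\ni\bar w$ of $T_D(G(\bar x))$; hence $\pm\bar w\in K_i$, while the limiting normal cone of the polyhedral set $T_D(G(\bar x))$ at $\bar w$ is the union $\bigcup_iK_i^\circ$, which is exactly the cone $N_{T_D(G(\bar x))}(\bar w)$ appearing in $\Lambda_{x^*}(\bar x;u)$.

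For part (i) the inequality ${\rm d}^2\delta_\Gamma(\bar x;x^*)(u)\le-\sigma_{T^2_\Gamma(\bar x;u)}(x^*)$ is immediate from Proposition \ref{LemBasicSecOrdSubDer}(iv), since $\skalp{x^*,u}=0$. For the converse I would take a sequence $t_k\downarrow0$, $u_k'\to u$ with $\bar x+t_ku_k'\in\Gamma$ realising the liminf in \eqref{indicatorf} and set $p_k:=2(u_k'-u)/t_k$, so that $\bar x+t_ku+\tfrac12t_k^2p_k\in\Gamma$ and $-2\skalp{x^*,u_k'}/t_k=-\skalp{x^*,p_k}$. Although $p_k$ need not be bounded, one always has $t_k\norm{p_k}\to0$ because $u_k'\to u$; a Taylor expansion then gives $G(\bar x+t_ku+\tfrac12t_k^2p_k)=G(\bar x)+t_k\bar w+\tfrac12t_k^2(Ap_k+Q)+o(t_k^2)$, and feeding this into the inclusion $G(\bar x+t_ku+\tfrac12t_k^2p_k)\in D$, together with Proposition \ref{lemexact} and the cone property of $T_D(G(\bar x))$ and of $K$, yields $\dist{Ap_k+Q,K}\to0$. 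Projecting $p_k$ onto the closed set $T^2_\Gamma(\bar x;u)$ and using the subregularity estimate, $-\skalp{x^*,p_k}\ge-\sigma_{T^2_\Gamma(\bar x;u)}(x^*)-\norm{x^*}\kappa\,\dist{Ap_k+Q,K}$, and passing to the limit gives the reverse inequality. I expect the honest control of the remainder for unbounded $p_k$ to be the one genuinely technical point here, resolved precisely by $t_k\norm{p_k}\to0$.

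Part (ii) I would obtain by a sandwich. The polar calculus $(A^{-1}K)^\circ=\bigcap_iA^TK_i^\circ$ (each $A^TK_i^\circ$ closed as it is polyhedral), together with $T_{T_\Gamma(\bar x)}(u)=A^{-1}K$ from \eqref{eqn : T and T_T}, shows $\widehat N_{T_\Gamma(\bar x)}(u)=\bigcap_iA^TK_i^\circ\subset\dom\sigma_{T^2_\Gamma(\bar x;u)}$, while $\pm\bar w\in K_i$ forces $\dom\sigma_{T^2_\Gamma(\bar x;u)}\subset\{u\}^\perp$. On $\{u\}^\perp$ part (i) turns finiteness of $\sigma$ into finiteness of ${\rm d}^2\delta_\Gamma$, so Proposition \ref{LemBasicSecOrdSubDer}(iii) places the domain inside $\widehat N^p_\Gamma(\bar x;u)$, and Proposition \ref{PropDirProxNormalCone} closes the loop via $\widehat N^p_\Gamma(\bar x;u)\subset\widehat N_{T_\Gamma(\bar x)}(u)$; the three sets coincide and \eqref{EqSecOrdSubDerGamma} holds on the domain by part (i). For the multiplier bounds I would dualise piecewise: linear programming duality on each nonempty piece gives $-\sigma_{T^2_\Gamma(\bar x;u)}(x^*)=\min_i\sup\{\skalp{y^*,Q}\mv y^*\in K_i^\circ,\ A^Ty^*=x^*\}$, the outer minimum is attained at a piece whose inner feasible, bounded linear program is solved by some $y^*\in\Lambda_{x^*}(\bar x;u)$ with $\skalp{y^*,Q}={\rm d}^2\delta_\Gamma(\bar x;x^*)(u)$, which yields \eqref{EqBndSecSubDeriv1} once $\norm{y^*}\le\kappa\norm{x^*}$. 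The lower bound \eqref{EqBndSecSubDeriv2} is direct: for $y^*\in\Lambda^s_{x^*}(\bar x;u)\subset K^\circ\subset K_i^\circ$ one has $\skalp{x^*,p}=\skalp{y^*,Ap+Q}-\skalp{y^*,Q}\le-\skalp{y^*,Q}$ for every $p\in T^2_\Gamma(\bar x;u)$.

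For part (iii) the guiding observation is a min/max duality between the two support functions: whereas $-\sigma$ is the minimum over the convex pieces of their support values, the lower generalized support function localises to the individual pieces and returns their maximum, $-\hat\sigma_{T^2_\Gamma(\bar x;u)}(x^*)=\max_i\sup\{\skalp{y^*,Q}\mv y^*\in K_i^\circ,\ A^Ty^*=x^*\}=\sup_{y^*\in\Lambda_{x^*}(\bar x;u)}\skalp{y^*,Q}$, which is consistent with $\hat\sigma\le\sigma$ and with part (i). Attainment of this supremum at a multiplier in $\Lambda_{x^*}(\bar x;u)$ and the bounds \eqref{EqLowerSuppGamma} then follow exactly as in part (ii), the inclusion $\dom\hat\sigma_{T^2_\Gamma(\bar x;u)}\subset\{x^*\mv\Lambda_{x^*}(\bar x;u)\ne\emptyset\}$ being read off from this representation (with the convention $\sup\emptyset=-\infty$ delivering the upper bound for all $x^*$). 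The single hard point common to parts (ii) and (iii) is the quantitative selection of an optimal multiplier inside $\kappa\norm{x^*}\,\mathrm{cl}\,\mathbb{B}$: I would derive it from the subregularity estimate of Proposition \ref{Prop2.10} by a Hoffman-type duality argument, reading $\kappa$ as the dual bound on the norm of the multiplier realising the optimal value of the piecewise linear program.
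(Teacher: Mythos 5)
Your part (i) is essentially the paper's own proof: the same defining sequences for the second subderivative, the same Taylor expansion combined with Proposition \ref{lemexact}, and the same use of the uniform subregularity estimate for the mapping $p\tto \nabla G(\bar x)p+\nabla^2G(\bar x)(u,u)-K$ to project the quotients $p_k=2(u_k'-u)/t_k$ onto $T^2_\Gamma(\bar x;u)$. Two pieces of your part (ii) are also correct and constitute a genuinely alternative route: the inclusion $\widehat N_{T_\Gamma(\bar x)}(u)\subset\dom\sigma_{T^2_\Gamma(\bar x;u)}$ via the polar calculus $(A^{-1}K)^\circ=\bigcap_i A^TK_i^\circ$ and piecewise boundedness (the paper argues instead by contradiction, using Robinson's subregularity of polyhedral mappings and Proposition \ref{closedcone}), the derivation of $\dom\sigma_{T^2_\Gamma(\bar x;u)}\subset\{u\}^\perp$ from $\pm\nabla G(\bar x)u\in K_i$, and the proof of \eqref{EqBndSecSubDeriv2}. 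The problems are in the multiplier existence claims, which is where the actual content of (ii) and (iii) lies.

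The first gap is your identification $N_{T_{D}({G}(\bar x))}(\nabla {G}(\bar x)u)=\bigcup_iK_i^\circ$, which is false, and with it the assertion that a piecewise LP dual solution lies in $\Lambda_{x^*}(\bar x;u)$. The union of the polars of the convex pieces depends on the chosen decomposition and can strictly contain the limiting normal cone. For instance, for $K=\mathbb{R}_+(1,1)\cup\mathbb{R}_+(-1,1)$ one has $N_K(0)=\{(1,1)\}^\perp\cup\{(-1,1)\}^\perp\cup K^\circ$ (two lines and a pointed cone), whereas $K_1^\circ\cup K_2^\circ$ is a union of two half-planes; the vector $(-1,0)$ belongs to $K_1^\circ$ but is not a limiting normal. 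Consequently, the $y^*$ produced by strong duality on the minimizing piece satisfies $A^Ty^*=x^*$ and $y^*\in K_{i^*}^\circ$, but it need not belong to $N_{T_{D}({G}(\bar x))}(\nabla {G}(\bar x)u)$, so the attainment statement and \eqref{EqBndSecSubDeriv1} in (ii), the inclusion $\dom\hat\sigma_{T^2_\Gamma(\bar x;u)}\subset\{x^*\mv\Lambda_{x^*}(\bar x;u)\ne\emptyset\}$, and \eqref{EqLowerSuppGamma} in (iii) are not established. The paper avoids this trap: it uses the piecewise structure only to show that the program \eqref{maxprog} has an optimal solution $\bar p$, and then invokes first-order optimality conditions under metric subregularity (\cite[Theorem 3]{GfrYe}) at $\bar p$, which produce a multiplier lying directly in $N_{K}(\nabla G(\bar x)\bar p+\nabla^2G(\bar x)(u,u))\subset N_{T_{D}({G}(\bar x))}(\nabla {G}(\bar x)u)$ \emph{together with} the bound $\norm{y^*}\le\kappa\norm{x^*}$; for (iii) it runs the analogous argument along sequences $x_k^*\in\widehat N_{T^2_\Gamma(\bar x;u)}(p_k)$ realizing the liminf in Definition \ref{def-hat-sigma}, via \cite[Proposition 4.1]{GfrOut16}.

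The second gap is the norm bound, which you yourself flag as the hard point: LP duality gives no control on $\norm{y^*}$, and the proposed repair---reading $\kappa$ as a Hoffman constant for the minimizing piece---does not work as stated, because $\kappa$ is the subregularity modulus of the \emph{union} constraint $p\tto Ap+Q-K$; the modulus of a single piece $p\tto Ap+Q-K_{i^*}$ can be strictly larger, since the distance to one piece exceeds the distance to the union. In the paper the membership $y^*\in\Lambda_{x^*}(\bar x;u)$ and the bound $\norm{y^*}\le\kappa\norm{x^*}$ come from one and the same variational-analytic step, which is exactly what the piecewise-duality scheme is missing. Relatedly, your guiding formula in (iii), $-\hat\sigma_{T^2_\Gamma(\bar x;u)}(x^*)=\max_i\sup\{\skalp{y^*,Q}\mv y^*\in K_i^\circ,\ A^Ty^*=x^*\}=\sup_{y^*\in\Lambda_{x^*}(\bar x;u)}\skalp{y^*,Q}$, is asserted rather than proved: it is stronger than the theorem itself (which only sandwiches $-\hat\sigma$ between the infimum and supremum over $\Lambda_{x^*}(\bar x;u)\cap\kappa\norm{x^*}{\rm cl\,}\mathbb{B}$ and asserts attainment by some multiplier), its second equality rests on the false identity above, and its first equality would require showing that the liminf defining $\hat\sigma$ decomposes over pieces---delicate precisely because regular normals to a union at points lying on several pieces are contained in the intersection, not the union, of the pieces' normal cones.
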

\begin{proof}
(i)  Consider $x^*\in \{u\}^\perp$. Taking into account \eqref{indicatorf},
consider sequences $t_k\downarrow 0$ and ${u}_k\to u$ such that $\bar x+t_k{u}_k\in\Gamma$ and
  \begin{equation} {\rm d}^2\delta_\Gamma(\bar x;x^*)(u)=\lim_{k\to\infty}-\frac{2\skalp{x^*,{u}_k}}{t_k}=\lim_{k\to\infty}-\frac{2\skalp{x^*,{u}_k-u}}{t_k}.\label{eqn32}
  \end{equation}
  Since ${G}(\bar x+t_k{u}_k)= {G}(\bar x)+t_k\nabla {G}(\bar x){u}_k+\frac 12 t_k^2(\nabla^2 {G}(\bar x)(u,u)+r_k)\in {D}$ with $r_k \to 0$, we obtain
  \[\nabla {G}(\bar x){u}_k+\frac 12 t_k(\nabla^2 {G}(\bar x)(u,u)+r_k)\in T_{D}({G}(\bar x)).\] Consequently, Proposition \ref{lemexact} yields 
  \begin{equation}\label{rk}
  \nabla {G}(\bar x)\frac{2({u}_k-u)}{t_k}+\nabla^2 {G}(\bar x)(u,u)+r_k\in T_{T_{D}({G}(\bar x))}(\nabla {G}(\bar x)u).
  \end{equation}
From Proposition \ref{Prop2.10} {and (\ref{EqSecOrdTanGamma2})} we get that the mapping
  \begin{equation}\label{EqAuxM}
  \Phi(p):=\nabla {G}(\bar x)p+\nabla^2 {G}(\bar x)(u,u)- T_{T_{D}({G}(\bar x))}(\nabla {G}(\bar x)u)
  \end{equation}
  satisfies
  \begin{equation}\label{EqAuxUnifSubrM}
  \dist{p,\Phi^{-1}(0)}\leq \kappa\dist{0,\Phi(p)}\ \ \forall p\in\R^n.
  \end{equation}
By (\ref{rk}) we have $-r_k\in \Phi(2({u}_k-u)/t_k)$.
Hence, for every $k$ we can find some $p_k\in \Phi^{-1}(0)$ satisfying $\norm{\frac{2({u}_k-u)}{t_k}-p_k}\leq\kappa \norm{r_k}$
and
  \[\nabla {G}(\bar x)p_k+\nabla^2 {G}(\bar x)(u,u)\in T_{T_{D}({G}(\bar x))}(\nabla {G}(\bar x)u)
  \]
  and $p_k \in T^2_\Gamma(\bar x;u)$ follows from (\ref{EqSecOrdTanGamma2}).
  Thus, by definition of the support function, we have $\skalp{x^*,p_k}\leq \sigma_{T^2_\Gamma(\bar x;u)}(x^*)$. Moreover, by (\ref{eqn32}) we have
  \[{\rm d}^2\delta_\Gamma(\bar x;x^*)(u)=\lim_{k\to\infty}-\frac{2\skalp{x^*,u_k-u}}{t_k}=\lim_{k\to\infty}-\skalp{x^*,p_k}\geq -\sigma_{T^2_\Gamma(\bar x;u)}(x^*).\]
  Since the opposite inequality holds by
  Proposition \ref{LemBasicSecOrdSubDer}(iv),  
   \eqref{EqSecOrdSubDerGamma} is established.

(ii) We have already shown in Proposition \ref{PropDirProxNormalCone} the inclusion $\widehat N^p_\Gamma(\bar x;u)\subset \widehat N_{T_\Gamma(\bar x)}(u)$.
 Further, since $u \in T_\Gamma(\bar x)$, by Proposition \ref{Prop2.10} we get
 $\nabla {G}(\bar x)u \in T_{D}({G}(\bar x))$ and
 $T_{T_{D}({G}(\bar x))}(\nabla {G}(\bar x)u) \neq \emptyset$ follows.
 This, however, ensures $T^2_\Gamma(\bar x;u)\neq \emptyset$ {due to (\ref{EqAuxUnifSubrM}) and $\Phi^{-1}(0)=T^2_\Gamma(\bar x; u)$ by   (\ref{EqSecOrdTanGamma2})}.  Hence
 $\sigma_{T^2_\Gamma(\bar x;u)}(x^*) > -\infty$ holds for all $x^* \in \R^n$.
Now let $x^*\in \widehat N_{T_\Gamma(\bar x)}(u)$
and we first show that
$x^* \in \dom \sigma_{T^2_\Gamma(\bar x;u)}$.
Assume on the contrary that $\sigma_{T^2_\Gamma(\bar x;u)}(x^*)=\infty$ and consider a sequence $p_k\in T^2_\Gamma(\bar x;u)$ with $\skalp{x^*,p_k}\to\infty$ as $k\to\infty$.
  By \eqref{EqSecOrdTanGamma2} we get
  \begin{equation}
  \nabla {G}(\bar x)p_k+\nabla^2{G}(\bar x)(u,u)\in T_{T_{D}({G}(\bar x))}(\nabla {G}(\bar x)u).  \label{eqn37}
  \end{equation}
  The mapping
  $$p\tto \nabla {G}(\bar x)p-T_{T_{D}({G}(\bar x))}(\nabla {G}(\bar x)u)$$
  is polyhedral, i.e., its graph is a polyhedral set, and is therefore metrically subregular at $(0,0)$ by Robinsons's result \cite{Rob81}.
  Since its graph is also a closed cone, Proposition \ref{closedcone} yields the existence of $\kappa'>0$
  such that for every $k$ we can find some $\tilde p_k$ with $\nabla {G}(\bar x)\tilde p_k\in T_{T_{D}({G}(\bar x))}(\nabla {G}(\bar x)u)$ and
  \[\norm{\tilde p_k-p_k}\leq \kappa'\dist{\nabla {G}(\bar x)p_k,T_{T_{D}({G}(\bar x))}(\nabla {G}(\bar x)u)}\leq \kappa'\norm{\nabla^2{G}(\bar x)(u,u)},\]
  where the second inequality follows from (\ref{eqn37}).
 Since $\nabla {G}(\bar x)\tilde p_k\in T_{T_{D}({G}(\bar x))}(\nabla {G}(\bar x)u)$,
 by Proposition \ref{Prop2.10} we obtain $\tilde p_k\in T_{T_\Gamma(\bar x)}(u)$,
 implying $\skalp{x^*,\tilde p_k}\leq 0$ due to $x^*\in \widehat N_{T_\Gamma(\bar x)}(u)=(T_{T_\Gamma(\bar x)}(u))^\circ$.
 This, however, contradicts the assumption that
 $\skalp{x^*, p_k}\to\infty$ as $k\to \infty$ since the sequence $\{\tilde p_k-p_k\}$ is bounded, showing $x^* \in \dom \sigma_{T^2_\Gamma(\bar x;u)}$.

 Consider now $x^* \in \dom \sigma_{T^2_\Gamma(\bar x;u)}$.
 Note that in order to show that $x^* \in \widehat N^p_\Gamma(\bar x;u)$,
 it suffices to prove $x^*\in \{u\}^\perp$, since then we get
 \eqref{EqSecOrdSubDerGamma} and
Proposition \ref{LemBasicSecOrdSubDer}(iii) gives the claim.
 As we will see, however, $x^*\in \{u\}^\perp$ comes as a by-product
 of the following arguments.

 Since $T_{T_{D}({G}(\bar x))}(\nabla {G}(\bar x)u)$ is a polyhedral cone, it can be written as the union of finitely many convex polyhedral cones, say $K_i$, $i=1,\ldots,s$, and therefore
  \[\sigma_{T^2_\Gamma(\bar x;u)}(x^*)=\max_{i=1,\ldots,s}\sup_{p}\{\skalp{x^*,p}\mv \nabla {G}(\bar x)p+\nabla^2 {G}(\bar x)(u,u)\in K_i\}.\]
  Taking into account that $\sigma_{T^2_\Gamma(\bar x;u)}(x^*)$ is finite,
  every linear program $\sup_{p}\{\skalp{x^*,p}\mv \nabla {G}(\bar x)p+\nabla^2 {G}(\bar x)(u,u)\in K_i\}$ is either infeasible, resulting in the optimal value $-\infty$ or has a finite optimal value and this optimal value is attained, see, e.g., \cite[Theorem 2.198]{BonSh00}.
  Hence, the program
  \begin{equation}\label{maxprog}
  \max \skalp{x^*,p}\mbox{ subject to }\nabla {G}(\bar x)p+\nabla^2{G}(\bar x)(u,u)\in T_{T_{D}({G}(\bar x))}(\nabla {G}(\bar x)u)
  \end{equation}
  has an optimal solution $\bar p$ and $\sigma_{T^2_\Gamma(\bar x;u)}(x^*)=\skalp{x^*,\bar p}$ follows.
  The corresponding constraint mapping is precisely $\Phi$ from \eqref{EqAuxM}
  and it is metrically subregular at $(\bar p,0)$ by \eqref{EqAuxUnifSubrM}.
  Thus, by \cite[Theorem 3]{GfrYe}, there exists a multiplier $y^*$ fulfilling the first-order optimality conditions
 \begin{eqnarray}
 && -x^*+\nabla {G}(\bar x)^Ty^*=0, \ {\norm{y^*}\leq \kappa \norm{x^*}}\label{eqn1},\\
 &&  y^*\in N_{T_{T_{D}({G}(\bar x))}(\nabla {G}(\bar x)u)}(\nabla {G}(\bar x)\bar p+\nabla^2{G}(\bar x)(u,u)) \subset N_{T_{D}({G}(\bar x))}(\nabla {G}(\bar x)u),\label{eqn2}
 \end{eqnarray}
 where in (\ref{eqn2}) we used \cite[Proposition 6.27(a)]{RoWe98}.
Particularly, since $T_{D}({G}(\bar x))$ and
$T_{T_{D}({G}(\bar x))}(\nabla {G}(\bar x)u)$ are cones, we conclude
 \[
 \skalp{y^*, \nabla {G}(\bar x)u}=0
 \quad \textrm{and} \quad
 \skalp{y^*,\nabla {G}(\bar x)\bar p+\nabla^2 {G}(\bar x)(u,u)}=0.
   \]
This means, however $ \skalp{x^*,u} = \skalp{\nabla {G}(\bar x)^Ty^*, u}=\skalp{y^*, \nabla {G}(\bar x)u}=0$ and
\begin{eqnarray}
&&  \sigma_{T^2_\Gamma(\bar x;u)}(x^*) = \skalp{x^*,\bar p}=-\skalp{y^*,\nabla^2 {G}(\bar x)(u,u)}\label{equation}
\end{eqnarray}
and we indeed get \eqref{EqSecOrdSubDerGamma} as claimed.
Moreover, conditions (\ref{eqn1}) and (\ref{eqn2}) ensure that $y^*\in \Lambda_{x^*}(\bar x;u)$ and so \eqref{EqBndSecSubDeriv1}
follows from \eqref{EqSecOrdSubDerGamma}.

  In order to show \eqref{EqBndSecSubDeriv2}, consider $y^*\in \Lambda^s_{x^*} (\bar x;u)$, i.e.,
  $$y^*\in \widehat N_{T_{D}({G}(\bar x))}(\nabla {G}(\bar x)u)
  =[T_{T_{D}({G}(\bar x))}(\nabla {G}(\bar x)u)]^\circ, \quad x^*=\nabla {G}(\bar x)^T y^*.$$
  Since $\bar p$ is an optimal solution of program (\ref{maxprog}), it is feasible, i.e.,
  $$\nabla {G}(\bar x)\bar p+\nabla^2{G}(\bar x)(u,u)\in T_{T_{D}({G}(\bar x))}(\nabla {G}(\bar x)u).$$
  It follows that $ \skalp{y^*,\nabla {G}(\bar x)\bar p+\nabla^2 {G}(\bar x)(u,u)}\leq 0$, showing $\sigma_{T^2_\Gamma(\bar x;u)}(x^*)=\skalp{x^*,\bar p}\leq -\skalp{y^*,\nabla^2 {G}(\bar x)(u,u)}$.
  Again, \eqref{EqBndSecSubDeriv2} follows from \eqref{EqSecOrdSubDerGamma}. {Finally by (\ref{EqSecOrdSubDerGamma}) and (\ref{equation}), we obtain the last conclusion of (ii).}

  (iii) Let $x^*$ satisfy
  $$\hat\sigma_{T^2_\Gamma(\bar x;u)}(x^*):=\liminf_{\tilde x^*\to x^*}\inf_{p'}\{ \skalp{\tilde x^*,p'} \mid \tilde x^*\in \widehat N_{T^2_\Gamma(\bar x;u)}(p')\} <\infty.$$
  Consider the sequences $x_k^*\to x^*$ and $p_k$ such that $x_k^*\in \widehat N_{T_\Gamma^2(\bar x;u)}(p_k)$ and $\hat\sigma_{T^2_\Gamma(\bar x;u)}(x^*)=\lim_{k\to\infty}\skalp{x_k^*,p_k}$.
  By (\ref{EqSecOrdTanGamma2}), we have $T^2_\Gamma(\bar x;u)=\Phi^{-1}(0)$ where the mapping $\Phi$ is given by \eqref{EqAuxM}.
  Since $\Phi$ is metrically subregular with modulus $\kappa$ at $(p_k,0)$ and $x_k^*\in \widehat N_{T_\Gamma^2(\bar x;u)}(p_k)\subset N_{T_\Gamma^2(\bar x;u)}(p_k)=N_{\Phi^{-1}(0)}(p_k)$,
  \cite[Proposition 4.1]{GfrOut16} yields the existence of some $y^*_k$ satisfying $\norm{y^*_k}\leq\kappa\norm{x_k^*}$ and  $(x_k^*,-y^*_k)\in N_{\gph \Phi}(p_k,0)$.
 Applying the change of coordinates formula (see e.g. \cite[Exercise 6.7]{RoWe98}) to $N_{\gph \Phi}(p_k,0)$, we obtain
  \begin{eqnarray}
&&  x_k^*=\nabla {G}(\bar x)^Ty^*_k, \label{eqn41}\\
&& y^*_k\in N_{T_{T_{D}({G}(\bar x))}(\nabla {G}(\bar x)u)}(\nabla {G}(\bar x)p_k+\nabla^2{G}(\bar x)(u,u)) \subset N_{T_{D}({G}(\bar x))}(\nabla {G}(\bar x)u),\label{eqn42}
\end{eqnarray}
   taking into account \cite[Proposition 6.27(a)]{RoWe98} as before. 
   Since $T_{T_{D}({G}(\bar x))}(\nabla {G}(\bar x)u)$ is a cone, by (\ref{eqn42}) we have $\langle y^*_k,  \nabla {G}(\bar x)p_k+\nabla^2{G}(\bar x)(u,u)\rangle =0$, which together with (\ref{eqn41}) implies that
    \begin{equation}
    \skalp{x_k^*,p_k}=-\skalp{y^*_k,\nabla^2 {G}(\bar x)(u,u)}.\label{eqn44}
    \end{equation}
    Since the sequence
    $y^*_k$ is bounded due to $\norm{y^*_k}\leq\kappa\norm{x_k^*}$, we can assume that it converges to some $y^*$ with
    $\norm{y^*}\leq \kappa \norm{x^*}$.
    Taking limits in (\ref{eqn41}), (\ref{eqn42})
    and (\ref{eqn44}), we obtain $y^*\in \Lambda_{x^*} (\bar x;u)$ and $\hat\sigma_{T^2_\Gamma(\bar x;u)}(x^*)=-\skalp{y^*,\nabla^2 {G}(\bar x)(u,u)}$, proving \eqref{EqLowerSuppGamma}.
    The upper bound in \eqref{EqLowerSuppGamma} is obviously valid if $\hat\sigma_{T^2_\Gamma(\bar x;u)}(x^*) = \infty$.
\end{proof}

\begin{remark}
   Recall that by Proposition \ref{Prop2.13} we have
   \[N_{T_{D}(z)}(w)={N}_{D}(z; w),\ {\forall} z\in D,\ w\in T_D(z).\]
   Applying Theorem \ref{ThSOVAGamma}(ii) with $G$ being the identity mapping and $\Gamma=D$ yields the counterpart
   \[\widehat N_{T_{D}(z)}(w)=\widehat {N}_{D}^p(z; w),\ 
   {\forall} z\in D,\ w\in T_D(z).\]
\end{remark}

Note that the bounds for the second subderivative and the lower generalized support function have the same structure, the only difference being the range of validity.
Further note that although the inclusion
$\dom \hat\sigma_{T^2_\Gamma(\bar x;u)}\subset \{x^*\mv \Lambda_{x^*} (\bar x;u)\not=\emptyset\}$ holds, it might be strict in general.
{However, the equality can be obtained under directional nondegeneracy condition, as shown in Corollary
\ref{CorDirNonDegen}(i) below.}

\begin{remark}
Inspired by \cite[Proposition 5.4]{MoMoSa20}, we further show that in (\ref{EqBndSecSubDeriv2}) the {supremum} over $\Lambda^s_{x^*} (\bar x;u)$ provides a tight lower 
{bound} for ${\rm d}^2\delta_\Gamma(\bar x; x^*)(u)$ from the point of view of {weak duality.}

In fact
\begin{eqnarray*}
{\rm d}^2\delta_\Gamma(\bar x; x^*)(u)&=&-\sigma_{T^2_\Gamma(\bar x;u)}(x^*)\\
&=& \min\limits_{p}\{ \langle p, -x^* \rangle| \nabla {G}(\bar x)p+\nabla^2 {G}(\bar x)(u, u)\in T^2_{D}({G}(\bar x); \nabla {G}(\bar x)u)\}\\
&=& \min\limits_{p} \langle p, -x^* \rangle+\delta_{T^2_{D}({G}(\bar x); \nabla {G}(\bar x)u)} \big(\nabla {G}(\bar x)p+\nabla^2 {G}(\bar x)(u, u)\big).
\end{eqnarray*}
The {conjugate dual} problem of the above minimization problem  takes the form
\[
\max\limits_{y^*}\big\{\min \limits_{p} \langle p, -x^* \rangle+\langle y^*, \nabla {G}(\bar x)p
+\nabla^2 {G}(\bar x)(u, u) \rangle
-\sigma_{T^2_{D}({G}(\bar x);\nabla {G}(\bar x)u)}(y^*)    \big\};
\] see e.g., \cite[(2.298)]{BonSh00}.
Note that by Proposition \ref{Prop2.13},
\[
\sigma_{T^2_{D}({G}(\bar x); \nabla {G}(\bar x)u)}(y^*) =\sigma_{T_{T_{D}({G}(\bar x))}(\nabla {G}(\bar x)u)}(y^*)
=\left\{\begin{array}{ll}
0  \ \ &\ y^*\in \widehat N_{T_{D}({G}(\bar x))}(\nabla {G}(\bar x)u)\\
+\infty \ \ & otherwise.
\end{array}  \right.
\]
Hence, the dual problem can be rewritten equivalently as
\[
\begin{array}{ll}
 \sup_{y^*}   & \langle y^*, \nabla^2 {G}(\bar x)(u, u)\rangle\\
  {\rm s.t.}  & x^*=\nabla {G}(\bar x)^Ty^*,\ y^*\in \widehat N_{T_{D}({G}(\bar x))}(\nabla {G}(\bar x)u)
\end{array}
\Longleftrightarrow \sup_{y^*\in \Lambda^s_{x^*}(\bar x;u)}\skalp{y^*,\nabla^2 {G}(\bar x)(u,u)}.
\]
Consequently, by the weak duality {\cite{BonSh00}}, we have
$${\rm d}^2\delta_\Gamma(\bar x; x^*)(u)\geq  \sup_{y^*\in \Lambda^s_{x^*} (\bar x;u)}\skalp{y^*,\nabla^2 {G}(\bar x)(u,u)}.$$
\end{remark}
{According to Theorem \ref{ThSOVAGamma}(ii), 
we observe that when the directional S- and M- multipliers coincide, the following equality holds:
\begin{equation*} {\rm d}^2\delta_\Gamma(\bar x;x^*)(u)=
  \max_{y^*\in  \Lambda_{x^*} (\bar x;u)}\skalp{y^*,\nabla^2 {G}(\bar x)(u,u)}. \label{eqn58}\end{equation*}
Now  we further require that ${D}$ is convex polyhedral. Consider
the critical cone defined by $K(\bar x; x^*):=\{u \in \{x^*\}^\perp \mv \nabla {G}(\bar x)u\in T_{D}({G}(\bar x))\}$.   Then for any critical direction $u\in K(\bar x; x^*)$, it turns out that all S-and M-multipliers coincide with the set of (nondirectional) multipliers
$\Lambda_{x^*}(\bar x):=\{y^*\in N_{D}({G}(\bar x)) \mv x^*=\nabla {G}(\bar x)^T y^*\}$. Consequently in the following corollary by using  Theorem \ref{ThSOVAGamma}(ii)
 we can recover the result  \cite[Exercise 13.17]{RoWe98} under a weaker condition with the metric regularity replaced by the metric subregularity.
It should be noted that the following result can also be obtained from \cite[Example 3.4 and Theorem 5.6]{MoMoSa20}
by specifying the general convex set considered therein to be convex polyhedral.}

\begin{corollary} \label{Cor:convex_polyhedral}
Assume that ${D}$ is convex polyhedral and that the MSCQ holds at $\xb \in \Gamma={G}^{-1}(D)$.
Then for any $x^*\in N_\Gamma(\bar x)$ and any $u \in \R^n$ one has
\begin{equation}\label{D-convex-1}
{\rm d}^2\delta_{\Gamma}(\bar x; x^*)(u)=\delta_{K(\bar x; x^*)}(u)+\max\limits_{y^*\in {\Lambda_{x^*}(\bar x)}}\skalp{y^*,\nabla^2{G}(\bar x)(u,u)}.
\end{equation}
\end{corollary}

\begin{proof}
 According to Proposition \ref{Prop2.10} together with \cite[Theorem 6.14]{RoWe98} we have
\begin{equation*}\label{D-convex-2}
 N_\Gamma (\bar x) \subset \nabla {G}(\bar x)^T N_{D}({G}(\bar x))=\nabla {G}(\bar x)^T \widehat N_{D}({G}(\bar x))
 \subset \widehat N_\Gamma(\bar x),
\end{equation*}
showing $N_\Gamma (\bar x)=\widehat N_\Gamma(\bar x)$
as well as ${\Lambda_{x^*}(\bar x)}\neq \emptyset$
for $x^*\in N_\Gamma(\bar x)$.
Further, by {Theorem \ref{ThSOVAGamma}(ii)} we know
\[
\widehat N^p_\Gamma(\bar x)=\widehat N^p_\Gamma(\bar x;0)= \widehat N_{T_\Gamma(\bar x)}(0)=\widehat N_\Gamma(\bar x) = N_\Gamma(\bar x).
\]

If $u\notin K(\bar x; x^*)$, we claim that ${\rm d}^2\delta_\Gamma(\bar x; x^*)(u)=\infty$.
Indeed, we either have $u\notin T_\Gamma(\bar x)$ or
$u\in T_\Gamma(\bar x)$ but $\skalp{x^*,u}\neq 0$,
in which case we must have $\skalp{x^*,u}<0$ since
 $\skalp{x^*,u}\leq 0$ due to $x^*\in N_\Gamma(\bar x)=\widehat N_\Gamma(\bar x)=(T_\Gamma(\bar x))^\circ$.
 In either case, however, Proposition \ref{LemBasicSecOrdSubDer}(i) yields
 ${\rm d}^2\delta_\Gamma(\bar x; x^*)(u)=\infty$.
  Since ${\Lambda_{x^*}(\bar x)}\neq \emptyset$ as shown above, we get $\sup_{y^*\in {\Lambda_{x^*}(\bar x)}}\skalp{y^*,\nabla^2{G}(\bar x)(u,u)}>-\infty$ and thus (\ref{D-convex-1}) holds.

 If $u\in K(\bar x; x^*)$, then
 \begin{eqnarray*}
  \Lambda_{x^*}(\bar x;u)=\Lambda^s_{x^*}(\bar x; u)
  &=&\{y^*|\, y^*\in \widehat N_{T_{D}({G}(\bar x))}(\nabla {G}(\bar x)u), \ x^*=\nabla {G}(\bar x)^Ty^*\} \nonumber \\
 &=&  \{y^*|\, y^*\in N_{D}({G}(\bar x)),\ \skalp{y^*,\nabla {G}(\bar x)u}=0,\  x^*=\nabla {G}(\bar x)^Ty^*\} \nonumber \\
 &=&  \{y^*|\, y^*\in N_{D}({G}(\bar x)),\ \skalp{x^*,u}=0,\  x^*=\nabla {G}(\bar x)^Ty^*\} \nonumber \\
&=&  \{y^*|\, y^*\in N_{D}({G}(\bar x)),\ x^*=\nabla {G}(\bar x)^Ty^*\} \nonumber \\
&=& {\Lambda_{x^*}(\bar x)}, \label{D-convex-3}
 \end{eqnarray*}
 where the first equality is due to the convexity of ${D}$ (see (\ref{convexcase})) and { the fifth} equality comes from the fact that $\skalp{x^*,u}=0$ holds automatically as $u\in K(\bar x; x^*)$.
{Since $x^*\in N_\Gamma(\bar x)=\widehat N^p_\Gamma(\bar x)\subset \Np_\Gamma(\bar x; u)$ and $\skalp{x^*,u}=0$, we have
  $x^*\in \widehat N_\Gamma^p(\bar x; u)$.} It then follows from (\ref{EqBndSecSubDeriv1}) and (\ref{EqBndSecSubDeriv2}) that
  \[
  {\rm d}^2\delta_\Gamma(\bar x;x^*)(u)=\sup_{y^*\in {\Lambda_{x^*}(\bar x)}}\skalp{y^*,\nabla^2 {G}(\bar x)(u,u)}=
  \sup_{y^*\in  {\Lambda_{x^*}(\bar x)} \cap \kappa \norm{x^*} {\rm cl\,} \mathbb{B}}\skalp{y^*,\nabla^2 {G}(\bar x)(u,u)}.
  \]
  Since ${\Lambda_{x^*}(\bar x)} \cap \kappa \norm{x^*}{\rm cl\,} \mathbb{B}$ is a compact set, the supremum can be attained
  and so it can be {replaced} by the maximum.
  This completes the proof.
  \end{proof}
{\subsection{Nondegenerate systems}}
{As we have seen in Corollary \ref{Cor:convex_polyhedral}, the results from Theorem \ref{ThSOVAGamma} get considerably simpler
if set ${D}$ is convex polyhedral.
Here we continue with simplifications, but we keep ${D}$ arbitrary polyhedral and strengthen the assumptions on constraints instead.
We start with the condition \eqref{EqSecOrdCRCQ}, which does not yield uniqueness of $y^*\in \Lambda_{x^*} (\bar x;u)$,
but it implies that the value $\skalp{y^*,\nabla^2 {G}(\bar x)(u,u)}$ is the same for all the multipliers, making
the lower and upper bounds in \eqref{EqBndSecSubDeriv1} and \eqref{EqLowerSuppGamma} equal.
Since (\ref{EqSecOrdCRCQ}) is in general weaker than the directional nondegeneracy condition (\ref{EqDirNonDegen1}),
we refer to it as the generalized directional nondegeneracy condition.
Moreover, we present the second-order tangent cone $T_\Gamma^2(\bar x;u)$ as a translation of a cone in the form (\ref{form}).}


\begin{proposition}\label{lem4.2}
Let $\bar x\in \Gamma$ and $u \in L_\Gamma(\xb)$ 
and suppose that the MSCQ holds at $\xb$ in direction $u$.
  Assume that the generalized directional nondegeneracy condition
  \begin{equation}
    \label{EqSecOrdCRCQ}\nabla {G}(\bar x)^Ty^*=0,\ y^*\in { {\rm span\,}{N_{D}({G}(\bar x); \nabla {G}(\bar x)u)} }\ \Longrightarrow\ \skalp{y^*,\nabla^2 {G}(\bar x)(u,u)}=0
  \end{equation} holds.
  Then {for any $x^*$ the quantity  $\skalp{y^*,\nabla^2 {G}(\bar x)(u,u)}$ is the same for all $y^*\in \Lambda_{x^*} (\bar x;u)$, i.e.,}
  \begin{equation}\label{EqSup=Inf}\sup_{y^*\in \Lambda_{x^*} (\bar x;u)}\skalp{y^*,\nabla^2 {G}(\bar x)(u,u)}=\inf_{y^*\in \Lambda_{x^*} (\bar x;u)}\skalp{y^*,\nabla^2 {G}(\bar x)(u,u)}\ \ \forall x^*:\Lambda_{x^*} (\bar x;u)\not=\emptyset,
  \end{equation}
  and there is some $p_0$ satisfying
  \begin{equation}\label{EqGammaV0}
    \nabla {G}(\bar x)p_0+\nabla^2 {G}(\bar x)(u,u)\in \lin{T_{T_{D}({G}(\bar x))}(\nabla {G}(\bar x)u)}.
  \end{equation}
  For every such $p_0$ we have the representation
  \begin{equation}
  T_\Gamma^2(\bar x;u)=p_0+ K_{\bar x;u}, \label{form}
  \end{equation}
  where $K_{\bar x;u}:=\{p\mv \nabla {G}(\bar x)p\in T_{T_{D}({G}(\bar x))}(\nabla {G}(\bar x)u)\}$ is a cone.
\end{proposition}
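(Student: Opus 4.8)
The plan is to dispatch the three assertions in order, the common engine being that the generalized nondegeneracy condition \eqref{EqSecOrdCRCQ} pins down the value of the linear functional $y^*\mapsto\langle y^*,\nabla^2 G(\bar x)(u,u)\rangle$ on the relevant multiplier sets. I first establish \eqref{EqSup=Inf}. Fix $x^*$ with $\Lambda_{x^*}(\bar x;u)\neq\emptyset$ and pick $y_1^*,y_2^*\in\Lambda_{x^*}(\bar x;u)$. Then $\nabla G(\bar x)^T(y_1^*-y_2^*)=x^*-x^*=0$, and since $y_1^*,y_2^*\in N_{T_D(G(\bar x))}(\nabla G(\bar x)u)=N_D(G(\bar x);\nabla G(\bar x)u)$ by \eqref{eqn9}, their difference belongs to ${\rm span\,}N_D(G(\bar x);\nabla G(\bar x)u)$. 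Hence \eqref{EqSecOrdCRCQ} forces $\langle y_1^*-y_2^*,\nabla^2 G(\bar x)(u,u)\rangle=0$, so the functional is constant on $\Lambda_{x^*}(\bar x;u)$ and its supremum equals its infimum.

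Next I prove the existence of $p_0$ satisfying \eqref{EqGammaV0}. Abbreviating $\mathcal{T}:=T_{T_D(G(\bar x))}(\nabla G(\bar x)u)$, finding such a $p_0$ is equivalent to the membership $\nabla^2 G(\bar x)(u,u)\in\range\nabla G(\bar x)+\lin{\mathcal{T}}$. As the right-hand side is a subspace, it suffices to show that $\nabla^2 G(\bar x)(u,u)$ is orthogonal to $(\range\nabla G(\bar x))^\perp\cap(\lin{\mathcal{T}})^\perp=\ker\nabla G(\bar x)^T\cap(\lin{\mathcal{T}})^\perp$. The key step is to control $(\lin{\mathcal{T}})^\perp$: Proposition \ref{lem3.1} supplies $\lin{\widehat T_D(G(\bar x);\nabla G(\bar x)u)}=({\rm span\,}N_D(G(\bar x);\nabla G(\bar x)u))^\perp$ together with the inclusion $\lin{\widehat T_D(G(\bar x);\nabla G(\bar x)u)}\subseteq\lin{\mathcal{T}}$, so taking orthogonal complements yields $(\lin{\mathcal{T}})^\perp\subseteq{\rm span\,}N_D(G(\bar x);\nabla G(\bar x)u)$. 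Therefore any $y^*\in\ker\nabla G(\bar x)^T\cap(\lin{\mathcal{T}})^\perp$ satisfies simultaneously $\nabla G(\bar x)^Ty^*=0$ and $y^*\in{\rm span\,}N_D(G(\bar x);\nabla G(\bar x)u)$, whence \eqref{EqSecOrdCRCQ} gives $\langle y^*,\nabla^2 G(\bar x)(u,u)\rangle=0$. This is exactly the required orthogonality, and passing to double orthogonal complements gives $\nabla^2 G(\bar x)(u,u)=\nabla G(\bar x)q+\ell$ with $\ell\in\lin{\mathcal{T}}$, so $p_0:=-q$ works. I expect this polarity bookkeeping, matching $(\lin{\mathcal{T}})^\perp$ against ${\rm span\,}N_D$ through Proposition \ref{lem3.1}, to be the main obstacle; the rest is formal.

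Finally I derive the representation \eqref{form}. By the MSCQ assumption and \eqref{EqSecOrdTanGamma2} one has $T^2_\Gamma(\bar x;u)=\{p\mv\nabla G(\bar x)p+\nabla^2 G(\bar x)(u,u)\in\mathcal{T}\}$, and I exploit the identity $\mathcal{T}+\lin{\mathcal{T}}=\mathcal{T}$ characterizing the lineality space. For the inclusion $\subseteq$, given $p\in T^2_\Gamma(\bar x;u)$ I write $\nabla G(\bar x)(p-p_0)=(\nabla G(\bar x)p+\nabla^2 G(\bar x)(u,u))-(\nabla G(\bar x)p_0+\nabla^2 G(\bar x)(u,u))$, which lies in $\mathcal{T}-\lin{\mathcal{T}}=\mathcal{T}$ (the first membership because $p\in T^2_\Gamma(\bar x;u)$, the second by \eqref{EqGammaV0}), so $p-p_0\in K_{\bar x;u}$. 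For $\supseteq$, given $p=p_0+q$ with $\nabla G(\bar x)q\in\mathcal{T}$ I write $\nabla G(\bar x)p+\nabla^2 G(\bar x)(u,u)=(\nabla G(\bar x)p_0+\nabla^2 G(\bar x)(u,u))+\nabla G(\bar x)q\in\lin{\mathcal{T}}+\mathcal{T}=\mathcal{T}$, so $p\in T^2_\Gamma(\bar x;u)$. The two inclusions give \eqref{form}, and $K_{\bar x;u}$ is a cone because $\mathcal{T}$ is a cone and $\nabla G(\bar x)$ is linear.
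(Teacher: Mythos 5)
Your proposal is correct and follows essentially the same route as the paper's proof: part one is the identical difference-of-multipliers argument (with $\Lambda_{x^*}(\bar x;u)\subset N_D(G(\bar x);\nabla G(\bar x)u)$ via \eqref{eqn9}), part two is the same duality argument combining \eqref{EqSecOrdCRCQ} with Proposition \ref{lem3.1} (the paper writes $\nabla^2 G(\bar x)(u,u)\in\big(\ker\nabla G(\bar x)^T\cap{\rm span\,}N_D(G(\bar x);\nabla G(\bar x)u)\big)^\perp\subset\nabla G(\bar x)\R^n+\lin{T_{T_D(G(\bar x))}(\nabla G(\bar x)u)}$, which is your orthogonality step with the complements taken in the opposite order), and part three is the same lineality-space shift using $\mathcal{T}\pm\lin{\mathcal{T}}=\mathcal{T}$ together with \eqref{EqSecOrdTanGamma2}.
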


\begin{proof}
Consider $y^*_1,y^*_2\in \Lambda_{x^*} (\bar x;u)$. By definition we have $\nabla {G}(\bar x)^T(y^*_1-y^*_2)=x^*-x^*=0$ and {$y^*_1-y^*_2\in N_{D}({G}(\bar x); \nabla {G}(\bar x)u)-N_{D}({G}(\bar x); \nabla {G}(\bar x)u)\subset  {\rm span\,}{N_{D}({G}(\bar x); \nabla {G}(\bar x)u)}$.} It follows by (\ref{EqSecOrdCRCQ}) that
$$\skalp{y^*_1-y^*_2,\nabla^2{G}(\bar x)(u,u)}=0.$$
Consequently, \eqref{EqSup=Inf} holds.
From \eqref{EqSecOrdCRCQ} and Proposition \ref{lem3.1} we infer
 \begin{equation*}
  \nabla^2 {G}(\bar x)(u,u)
  \in
  \big(\ker \nabla {G}(\bar x)^T\cap {\rm span\,}{ N_{D}({G}(\bar x); \nabla {G}(\bar x)u)
}\big)^\perp
\subset
\nabla {G}(\bar x)\R^n+ \lin{T_{T_{D}({G}(\bar x))}(\nabla {G}(\bar x)u)},
 \end{equation*}
 which yields the existence of $p_0$ satisfying \eqref{EqGammaV0}.
  In fact, by (\ref{EqSecOrdTanGamma2}) we know $p_0\in T^2_\Gamma(\bar x; u)$.

 For every $p\in K_{\bar x:u}$ we have
 \[\nabla {G}(\bar x)(p_0+p)+\nabla^2{G}(\bar x)(u,u)\in T_{T_{D}({G}(\bar x))}(\nabla {G}(\bar x)u) + \lin{T_{T_{D}({G}(\bar x))}(\nabla {G}(\bar x)u)}= T_{T_{D}({G}(\bar x))}(\nabla {G}(\bar x)u),\]
 showing $p_0+p\in T^2_\Gamma(\bar x;u)$ by \eqref{EqSecOrdTanGamma2}.
 On the other hand, if $p\in T^2_\Gamma(\bar x;u)$, then by (\ref{EqSecOrdTanGamma2}) we obtain
 \[\nabla {G}(\bar x)(p-p_0)\in  T_{T_{D}({G}(\bar x))}(\nabla {G}(\bar x)u) - \lin{T_{T_{D}({G}(\bar x))}(\nabla {G}(\bar x)u)}=T_{T_{D}({G}(\bar x))}(\nabla {G}(\bar x)u),\]
 i.e., $p-p_0\in K_{\bar x;u}$.
 This verifies $T_\Gamma^2(\bar x;u)=p_0+ K_{\bar x;u}$ and the proof is complete.
\end{proof}

{Combining Proposition \ref{lem4.2} and Proposition \ref{PropHatSigma} together yields the following result.}

\begin{corollary}\label{Cor4.3}
Let $\bar x\in \Gamma$ and $u \in L_\Gamma(\xb)$
and suppose that the MSCQ holds at $\xb$ in direction $u$.
If {the generalized directional nondegeneracy condition} \eqref{EqSecOrdCRCQ} is fulfilled, then
  \begin{align}
    &{\rm d}^2\delta_\Gamma(\bar x;x^*)(u)=-\sigma_{T^2_\Gamma(\bar x;u)}(x^*)=\skalp{y^*,\nabla^2 {G}(\bar x)(u,u)}\ \ \forall x^*\in \dom \sigma_{T^2_\Gamma(\bar x;u)}=\widehat N^p_\Gamma(\bar x;u), \label{eqn51}\\
    &-\hat\sigma_{T^2_\Gamma(\bar x;u)}(x^*)=\skalp{y^*,\nabla^2 {G}(\bar x)(u,u)}=-{\langle  x^*, p_0 \rangle} \ \ \forall x^*\in \dom \hat\sigma_{T^2_\Gamma(\bar x;u)}= N_{K_{\bar x;u}}(0),\label{eqn52}
  \end{align}
{where $y^*$ is an arbitrary element from $\Lambda_{x^*} (\bar x;u)$ and $p_0$ is an arbitrary vector satisfying $(\ref{EqGammaV0})$, respectively.}
\end{corollary}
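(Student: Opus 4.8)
The plan is to obtain \eqref{eqn51} and \eqref{eqn52} by combining the sandwich estimates of Theorem~\ref{ThSOVAGamma} with the two consequences of the generalized directional nondegeneracy \eqref{EqSecOrdCRCQ} recorded in Proposition~\ref{lem4.2}: namely that the linear functional $y^*\mapsto\skalp{y^*,\nabla^2 G(\bar x)(u,u)}$ is constant on $\Lambda_{x^*}(\bar x;u)$, cf. \eqref{EqSup=Inf}, together with the representation $T^2_\Gamma(\bar x;u)=p_0+K_{\bar x;u}$ of the second-order tangent set as the translate of a cone. For \eqref{eqn52} this cone representation will then be fed into Proposition~\ref{PropHatSigma}. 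No new estimates are needed; everything reduces to substituting earlier results.

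For the first line \eqref{eqn51}, the opening equality ${\rm d}^2\delta_\Gamma(\bar x;x^*)(u)=-\sigma_{T^2_\Gamma(\bar x;u)}(x^*)$ together with the identification $\dom\sigma_{T^2_\Gamma(\bar x;u)}=\widehat N^p_\Gamma(\bar x;u)$ is exactly Theorem~\ref{ThSOVAGamma}(ii). It remains to evaluate this common value. Fixing $x^*\in\dom\sigma_{T^2_\Gamma(\bar x;u)}$, Theorem~\ref{ThSOVAGamma}(ii) also furnishes a multiplier lying in $\Lambda_{x^*}(\bar x;u)\cap\kappa\norm{x^*}{\rm cl\,}\mathbb{B}$, so this restricted set is nonempty, and \eqref{EqBndSecSubDeriv1} sandwiches ${\rm d}^2\delta_\Gamma(\bar x;x^*)(u)$ between the infimum and the supremum of $\skalp{y^*,\nabla^2 G(\bar x)(u,u)}$ over it. By \eqref{EqSup=Inf} this functional is constant on all of $\Lambda_{x^*}(\bar x;u)$, hence a fortiori on the nonempty subset, so infimum and supremum coincide and equal $\skalp{y^*,\nabla^2 G(\bar x)(u,u)}$ for an arbitrary $y^*\in\Lambda_{x^*}(\bar x;u)$. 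The sandwich then collapses to an equality, giving \eqref{eqn51}.

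For the second line \eqref{eqn52}, I would first compute the lower generalized support function directly. By Proposition~\ref{lem4.2} some $p_0$ satisfies \eqref{EqGammaV0} and $T^2_\Gamma(\bar x;u)=p_0+K_{\bar x;u}$ with $K_{\bar x;u}$ a closed cone; applying Proposition~\ref{PropHatSigma} with $z=p_0$ and $K=K_{\bar x;u}$ yields $\dom\hat\sigma_{T^2_\Gamma(\bar x;u)}=N_{K_{\bar x;u}}(0)$ and $\hat\sigma_{T^2_\Gamma(\bar x;u)}(x^*)=\skalp{x^*,p_0}$ for every $x^*\in N_{K_{\bar x;u}}(0)$, which is the rightmost equality $-\hat\sigma_{T^2_\Gamma(\bar x;u)}(x^*)=-\skalp{x^*,p_0}$. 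For the middle equality I repeat the squeeze: for $x^*\in\dom\hat\sigma_{T^2_\Gamma(\bar x;u)}$ one has $\Lambda_{x^*}(\bar x;u)\neq\emptyset$ by Theorem~\ref{ThSOVAGamma}(iii), the bounds \eqref{EqLowerSuppGamma} sandwich $-\hat\sigma_{T^2_\Gamma(\bar x;u)}(x^*)$ between the infimum and supremum of $\skalp{y^*,\nabla^2 G(\bar x)(u,u)}$ over the nonempty set $\Lambda_{x^*}(\bar x;u)\cap\kappa\norm{x^*}{\rm cl\,}\mathbb{B}$, and \eqref{EqSup=Inf} again forces these to agree and to equal $\skalp{y^*,\nabla^2 G(\bar x)(u,u)}$ for arbitrary $y^*\in\Lambda_{x^*}(\bar x;u)$.

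The only genuinely delicate points are bookkeeping ones, so I do not expect a substantial obstacle. One must check that the ball-intersected multiplier sets are nonempty, so that the bounds in \eqref{EqBndSecSubDeriv1} and \eqref{EqLowerSuppGamma} are not vacuous (both are guaranteed by Theorem~\ref{ThSOVAGamma}), and one must invoke the independence of $\skalp{y^*,\nabla^2 G(\bar x)(u,u)}$ of the choice of $y^*$ — the content of \eqref{EqSup=Inf} — as the precise fact that licenses the phrase ``arbitrary $y^*$'' in the statement.
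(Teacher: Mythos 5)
Your proposal is correct and follows essentially the same route as the paper: equation \eqref{eqn51} from Theorem \ref{ThSOVAGamma}(i)(ii) combined with the multiplier-independence \eqref{EqSup=Inf} of Proposition \ref{lem4.2}, and equation \eqref{eqn52} from Theorem \ref{ThSOVAGamma}(iii), the cone representation $T^2_\Gamma(\bar x;u)=p_0+K_{\bar x;u}$, and Proposition \ref{PropHatSigma}. Your use of the sandwich bounds \eqref{EqBndSecSubDeriv1} and \eqref{EqLowerSuppGamma} in place of the theorem's attainment statements is an immaterial variation, and your bookkeeping remarks on nonemptiness are handled implicitly in the paper's argument as well.
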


\begin{proof}
Note that (\ref{eqn51}) holds by Theorem \ref{ThSOVAGamma}(i)(ii)  and Proposition \ref{lem4.2}.
Let $x^*\in \dom \hat\sigma_{T^2_\Gamma(\bar x;u)}$.
Then, by Theorem \ref{ThSOVAGamma}(iii) and Proposition \ref{lem4.2}, for any $ y^*\in   \Lambda_{x^*} (\bar x;u)$
we have
$$-\hat\sigma_{T^2_\Gamma(\bar x;u)}(x^*)=\skalp{y^*,\nabla^2 {G}(\bar x)(u,u)}.$$
Moreover, by Proposition \ref{lem4.2} there exists $p_0$ satisfying $(\ref{EqGammaV0})$ such that $T_\Gamma^2(\bar x;u)=p_0+ K_{\bar x;u}.$ Hence, (\ref{eqn52}) follows from Proposition \ref{PropHatSigma}.
\end{proof}

\begin{remark}
  Under the assumptions of Corollary \ref{Cor4.3}, we have
  \[{\rm d}^2\delta_\Gamma(\bar x;x^*)(u)=-\sigma_{T^2_\Gamma(\bar x;u)}(x^*)=-\hat\sigma_{T^2_\Gamma(\bar x;u)}(x^*)
  \quad
  \forall x^*\in \dom \sigma_{T^2_\Gamma(\bar x;u)}\subset \dom \hat\sigma_{T^2_\Gamma(\bar x;u)}.\]
  Thus, whenever ${\rm d}^2\delta_\Gamma(\bar x;x^*)(u)$ differs from $-\hat\sigma_{T^2_\Gamma(\bar x;u)}(x^*)$ there holds $\sigma_{T^2_\Gamma(\bar x;u)}(x^*)=\infty$.
\end{remark}

In general we only know an inclusion
\[N_{K_{\bar x;u}}(0)\subset \nabla {G}(\bar x)^TN_{T_{T_{D}({G}(\bar x))}(\nabla {G}(\bar x)u)}(0)=\nabla {G}(\bar x)^TN_{T_{D}({G}(\bar x))}(\nabla {G}(\bar x)u){=\{x^*\,|\, \Lambda_{x^*} (\bar x;u)\not=\emptyset\}}.\]
If we strengthen \eqref{EqSecOrdCRCQ} to (\ref{EqDirNonDegen1}), however,
this inclusion holds with equality, {the multipliers become unique,} and we are also able to give an alternative representation of the set $\widehat N^p_\Gamma(\bar x;u)$.

\begin{corollary}\label{CorDirNonDegen}
Let $\bar x\in \Gamma$ and $u \in L_\Gamma(\bar x)$.
  Under the directional nondegeneracy condition (\ref{EqDirNonDegen1}),
 the following statements hold:
  \begin{enumerate}
  \item[\rm (i)]
  We have
  $\dom \sigma_{T^2_\Gamma(\bar x;u)}=\nabla {G}(\bar x)^T\widehat N_{T_{D}({G}(\bar x))}(\nabla {G}(\bar x)u)=\widehat N^p_\Gamma(\bar x;u)$
  and for every
$x^* \in \dom \sigma_{T^2_\Gamma(\bar x;u)}$ the
set $\Lambda^s_{x^*} (\bar x;u)$ is a singleton $\{y^*_0\}$
and ${\rm d}^2\delta_\Gamma(\bar x;x^*)(u)=-\sigma_{T^2_\Gamma(\bar x;u)}(x^*)=\skalp{y^*_0,\nabla^2 {G}(\bar x)(u,u)}$.
\item[\rm (ii)]
We have
$\dom \hat\sigma_{T^2_\Gamma(\bar x;u)}=\nabla {G}(\bar x)^TN_{T_{D}({G}(\bar x))}(\nabla {G}(\bar x)u)=N_\Gamma(\bar x;u)$
and for every
$x^* \in \dom \hat\sigma_{T^2_\Gamma(\bar x;u)}$ the
set $\Lambda_{x^*} (\bar x;u)$ is a singleton $\{y^*_0\}$
and $-\hat\sigma_{T^2_\Gamma(\bar x;u)}(x^*)=\skalp{y^*_0,\nabla^2 {G}(\bar x)(u,u)}$.
  \end{enumerate}
 \end{corollary}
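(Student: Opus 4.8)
The plan is to obtain both statements by upgrading, under the stronger hypothesis \eqref{EqDirNonDegen1}, the results already available under the generalized nondegeneracy \eqref{EqSecOrdCRCQ}, and by feeding in the first-order identities of Theorem \ref{ThDirNonDegen}. First I would note that \eqref{EqDirNonDegen1} implies the FOSCMS-type condition \eqref{FOSCMS} (as ${\rm span\,}N_D \supset N_D$), hence MSCQ holds at $\bar x$ in direction $u$; consequently $u \in T_\Gamma(\bar x)$ and Theorem \ref{ThSOVAGamma}, Proposition \ref{lem4.2} and Corollary \ref{Cor4.3} are all applicable, since \eqref{EqDirNonDegen1} also trivially implies \eqref{EqSecOrdCRCQ}. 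For part (i), Theorem \ref{ThSOVAGamma}(ii) gives $\dom \sigma_{T^2_\Gamma(\bar x;u)} = \widehat N^p_\Gamma(\bar x;u) = \widehat N_{T_\Gamma(\bar x)}(u)$, while Proposition \ref{Prop2.10}, whose subspace hypothesis \eqref{eq : Lsubspace} is guaranteed by nondegeneracy, yields $\widehat N_{T_\Gamma(\bar x)}(u) = \nabla G(\bar x)^T \widehat N_{T_D(G(\bar x))}(\nabla G(\bar x)u)$; chaining these produces the required equalities for $\dom \sigma_{T^2_\Gamma(\bar x;u)}$. The value formula ${\rm d}^2\delta_\Gamma(\bar x;x^*)(u) = -\sigma_{T^2_\Gamma(\bar x;u)}(x^*) = \langle y^*_0, \nabla^2 G(\bar x)(u,u)\rangle$ is then read off from \eqref{eqn51}, once $\Lambda^s_{x^*}(\bar x;u)$ is shown to be the singleton $\{y^*_0\}$ and we recall $\Lambda^s_{x^*}(\bar x;u) \subset \Lambda_{x^*}(\bar x;u)$.

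For the singleton assertions (in both parts) I would argue uniqueness directly from \eqref{EqDirNonDegen1}. Nonemptiness over the respective domains is immediate, since each domain is by construction of the form $\nabla G(\bar x)^T(\cdot)$ of the relevant normal cone of $D$. For uniqueness, if $y^*_1,y^*_2 \in \Lambda_{x^*}(\bar x;u)$ then $\nabla G(\bar x)^T(y^*_1 - y^*_2)=0$ and, using $N_{T_D(z)}(w) = N_D(z;w)$ from Proposition \ref{Prop2.13}, the difference satisfies $y^*_1 - y^*_2 \in N_{T_D(G(\bar x))}(\nabla G(\bar x)u) - N_{T_D(G(\bar x))}(\nabla G(\bar x)u) \subset {\rm span\,}N_D(G(\bar x);\nabla G(\bar x)u)$; hence $y^*_1 = y^*_2$ by \eqref{EqDirNonDegen1}. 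The identical computation with $\widehat N_{T_D}$ in place of $N_{T_D}$ handles $\Lambda^s_{x^*}(\bar x;u)$. The corresponding values of the two support functions at the unique multiplier then come from \eqref{eqn51}--\eqref{eqn52} of Corollary \ref{Cor4.3}.

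The genuinely new point, and the main obstacle, is the domain equality $\dom \hat\sigma_{T^2_\Gamma(\bar x;u)} = N_\Gamma(\bar x;u)$ in part (ii). The inclusion ``$\subset$'' is routine: Theorem \ref{ThSOVAGamma}(iii) gives $\dom \hat\sigma_{T^2_\Gamma(\bar x;u)} \subset \{x^* \mv \Lambda_{x^*}(\bar x;u) \ne \emptyset\}$, and by the definition of $\Lambda_{x^*}$ together with Theorem \ref{ThDirNonDegen} this target set equals $\nabla G(\bar x)^T N_{T_D(G(\bar x))}(\nabla G(\bar x)u) = N_\Gamma(\bar x;u)$. For the reverse inclusion I would invoke Corollary \ref{Cor4.3}, which identifies $\dom \hat\sigma_{T^2_\Gamma(\bar x;u)} = N_{K_{\bar x;u}}(0)$ with $K_{\bar x;u} = \{p \mv \nabla G(\bar x)p \in T_{T_D(G(\bar x))}(\nabla G(\bar x)u)\}$, and then upgrade the generally valid inclusion $N_{K_{\bar x;u}}(0) \subset \nabla G(\bar x)^T N_{T_D(G(\bar x))}(\nabla G(\bar x)u)$ to an equality. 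The device I expect to need is to apply Theorem \ref{ThDirNonDegen} \emph{to the reduced disjunctive system} $K_{\bar x;u}$ itself, defined by the linear map $\tilde G(p) := \nabla G(\bar x)p$ and the polyhedral cone $\tilde D := T_{T_D(G(\bar x))}(\nabla G(\bar x)u) = T^2_D(G(\bar x);\nabla G(\bar x)u)$, at the point $0$ in direction $0$ (the directional limiting normal cone in the zero direction being the ordinary one). Using $N_{T^2_D(z;w)}(0) = N_D(z;w)$ from Proposition \ref{Prop2.13} one checks that the nondegeneracy hypothesis of Theorem \ref{ThDirNonDegen} for this reduced system reduces exactly to \eqref{EqDirNonDegen1}; the theorem then delivers
\[
  N_{K_{\bar x;u}}(0) = \nabla G(\bar x)^T N_{\tilde D}(0) = \nabla G(\bar x)^T N_{T_D(G(\bar x))}(\nabla G(\bar x)u) = N_\Gamma(\bar x;u).
\]
With this equality established, the singleton property of $\Lambda_{x^*}(\bar x;u)$ and the evaluation $-\hat\sigma_{T^2_\Gamma(\bar x;u)}(x^*) = \langle y^*_0, \nabla^2 G(\bar x)(u,u)\rangle$ follow exactly as in part (i), completing the argument.
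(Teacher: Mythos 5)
Your proposal is correct and follows essentially the same route as the paper: deduce MSCQ from \eqref{EqDirNonDegen1}, get part (i) by chaining Theorem \ref{ThSOVAGamma}(ii) with the regular normal cone formula of Proposition \ref{Prop2.10}, prove uniqueness of multipliers via the span argument with \eqref{EqDirNonDegen1}, and for part (ii) identify $\dom \hat\sigma_{T^2_\Gamma(\bar x;u)}=N_{K_{\bar x;u}}(0)$ via Corollary \ref{Cor4.3} and then apply Theorem \ref{ThDirNonDegen} to the reduced system $K_{\bar x;u}$ at $0$, using Proposition \ref{Prop2.13} to see that its nondegeneracy hypothesis is exactly \eqref{EqDirNonDegen1}. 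This is precisely the paper's argument (your separate treatment of the ``$\subset$'' inclusion in (ii) and of uniqueness for $\Lambda^s_{x^*}(\bar x;u)$ is redundant but harmless).
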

\begin{proof}
Recall that \eqref{EqDirNonDegen1} implies (\ref{FOSCMS}) which further ensures   the MSCQ at $\xb$ in direction $u$.

(i) It follows from Theorem \ref{ThSOVAGamma} and
Proposition \ref{Prop2.10} that
 \begin{equation*}\label{dom-sigma}
 \dom \sigma_{T^2_\Gamma(\bar x;u)}=\widehat N^p_\Gamma(\bar x;u)=\widehat N_{T_\Gamma(\bar x)}(u)=\nabla {G}(\bar x)^T\widehat N_{T_{D}({G}(\bar x))}(\nabla {G}(\bar x)u).
 \end{equation*}
 Particularly, this shows that $\Lambda^s_{x^*} (\bar x;u)\neq \emptyset$.
 Taking into account $\Lambda^s_{x^*} (\bar x;u) \subset \Lambda_{x^*} (\bar x;u)$, the remaining claims
 follow from Corollary \ref{Cor4.3} once we prove
 that $\Lambda_{x^*} (\bar x;u)$ is a singleton in the next step.

 (ii) Note that
 $N_{T_{T_{D}({G}(\bar x))}(\nabla {G}(\bar x)u)}(0)=N_{T_{D}({G}(\bar x))}(\nabla {G}(\bar x)u)=N_{D}({G}(\bar x);\nabla {G}(\bar x)u)$ by Proposition \ref{Prop2.13}.
 Hence, applying Theorem \ref{ThDirNonDegen} to the set $K_{\bar x;u}:=\{p\mv \nabla {G}(\bar x)p\in T_{T_{D}({G}(\bar x))}(\nabla {G}(\bar x)u)\}$ at $p=0$ yields
 \[\dom \hat\sigma_{T^2_\Gamma(\bar x;u)}=N_{K_{\bar x;u}}(0)=\nabla {G}(\xb)^TN_{T_{T_{D}({G}(\bar x))}(\nabla {G}(\bar x)u)}(0)=\nabla {G}(\bar x)^TN_{T_{D}({G}(\bar x))}(\nabla {G}(\bar x)u)=N_\Gamma(\bar x;u),\]
where the first equality comes from (\ref{eqn52}) and the last equality follows from \eqref{Eq(13-14)}.
Let $x^*\in \dom \hat\sigma_{T^2_\Gamma(\bar x;u)}=N_\Gamma(\bar x;u)$.  Suppose that $y^*_1,y^*_2\in \Lambda_{x^*} (\bar x;u).$
Then
$\nabla {G}(\bar x)^T(y^*_1-y^*_2)=0$ and
$$
y^*_1-y^*_2\in N_{D}({G}(\bar x); \nabla {G}(\bar x)u)-N_{D}({G}(\bar x); \nabla {G}(\bar x)u)\subset {\rm span\,}N_{D}({G}(\bar x); \nabla {G}(\bar x)u).
$$
The nondegeneracy condition \eqref{EqDirNonDegen1} yields $y^*_1=y^*_2$, which implies that the set $\Lambda_{x^*} (\bar x;u)$ contains a unique element, say $y^*_0$.
Corollary \ref{Cor4.3} now completes the proof.
\end{proof}
\section{Application: Second-order conditions for disjunctive programs}
{
Let us first reiterate that our ultimate goal is to investigate problems (GP) with set $C$ having the complex structure \eqref{eq:StructureOfC}.
As explained, this has to be postponed until we complete the full analysis in the forthcoming paper.}
In this section, we {only provide a simple} application of our results to disjunctive program defined as
\begin{flalign*} \label{P}
\begin{split}
\mbox{(DP)} \hspace{55mm} \min & \ \  f(x)\\
{\rm s.t.}
& \ \   g(x)\in D,
\end{split}&
\end{flalign*}
where $g$ is twice continuously differentiable and $D$ is polyhedral.
Several classes of interesting mathematical programs of practical interests can be reformulated as a (DP),
including the mathematical program with equilibrium constraints {(MPEC)}  (cf. \cite{GLY13,lpr,Outra,SS00}),  the mathematical program with vanishing constraints (cf. \cite{ak,HK07,hk08}),
the mathematical program with switching constraints (cf. \cite{kms,ly}) and the mathematical program with cardinality constraints (cf. \cite{BS18,cks}). {The discussion on constraint qualifications of disjunctive programming can be found in \cite{Gfr14a,Meh20} and references therein.}

Using the second-order variational analysis of the disjunctive system, we can  now recover the second-order optimality conditions for the (DP) derived by Gfrerer in \cite{Gfr14a}. {Moreover using the calculations for directional S- and M- multiplier sets for MPECs in \cite{Gfr14a}, we can easily obtain the corresponding second-order optimality conditions for MPECs from Theorem \ref{Thm5.6}.}

\begin{theorem}[{\cite[Theorems 3.3 and 3.17]{Gfr14a}}] \label{Thm5.6}
Let $\xb$ be a local optimal solution of the disjunctive program (DP).
Then the following necessary optimality conditions hold:
\begin{itemize}
\item[\rm (i)] For $u \in \mathcal{C}(\xb)$, suppose that $x \tto g(x)-D$ is metrically subregular in direction $u$ at $(\xb,0)$. Then there exists $\lambda \in \Lambda(\xb;u)$ such that
$$\nabla^2_{xx}L(\bar x, \lambda)(u,u) \geq 0.$$
\item[\rm (ii)]
  For $u \in \mathcal{C}(\xb)$, assume that the nondegeneracy condition in direction $u$
  $$\nabla g(\bar x)^Ty^*=0,\ y^*\in {\rm span\,} {N_D(g(\bar x);\nabla g(\bar x)u)}\  \Longrightarrow \ y^*=0 $$
    is fulfilled.  Then $\nabla^2_{xx}L(\bar x, \lambda)(u,u) \geq 0$
    holds with the unique directional S-multiplier $\lambda \in \Lambda^s(\xb;u)$.
\end{itemize}
  Conversely, suppose that $\xb$ is a feasible solution of the disjunctive program (DP). Suppose that for
  each nonzero $u\in \mathcal{C}(\xb)$, there are {$\alpha$ and $\lambda$, not both equal to zero, with $\alpha\geq 0, \lambda \in \widehat{N}_D^p (g(\bar x);\nabla g(\bar x)u)=\widehat{N}_{T_D (g(\bar x))}(\nabla g(\bar x)u)$}, such that
  \[
   \nabla^2_{xx}L^\alpha(\bar x, \lambda)(u,u)>0,
   \]
   then $\xb$ is an essential local minimizer of second order.
\end{theorem}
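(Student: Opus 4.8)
The plan is to derive all three assertions from the general second-order conditions established earlier, namely Theorem~\ref{Thm3.1} for the two necessary parts and Theorem~\ref{Theorem5.9} for the sufficient converse, and then to exploit the polyhedrality of $D$ to show that the respective curvature terms $\hat\sigma$, $\sigma$ and ${\rm d}^2\delta_D$ all vanish. Throughout I would write $z:=g(\bar x)$, $w:=\nabla g(\bar x)u$ and $K:=T^2_D(z;w)$. The decisive structural fact is Proposition~\ref{Prop2.13}: since $D$ is polyhedral, $K=T_{T_D(z)}(w)$ is a \emph{cone}, and moreover $N_D(z;w)=N_K(0)$ by \eqref{eqn9} while $\widehat N_{T_D(z)}(w)=K^\circ$ by \eqref{eqn8}. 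The whole argument then consists of matching the multiplier of each condition to the correct membership and reading off that the curvature term is zero.

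For part (i) I would invoke Theorem~\ref{Thm3.1}(i), which under directional metric subregularity supplies a directional M-multiplier $\lambda\in\Lambda(\bar x;u)$ with $\nabla^2_{xx}L(\bar x,\lambda)(u,u)-\hat\sigma_{K}(\lambda)\geq0$. Since $\lambda\in\Lambda(\bar x;u)\subset N_D(z;w)=N_K(0)$, Proposition~\ref{PropHatSigma} applied to the cone $K$ (translation by $0$) gives $\hat\sigma_K(\lambda)=\langle\lambda,0\rangle=0$, whence $\nabla^2_{xx}L(\bar x,\lambda)(u,u)\geq0$. For part (ii) the directional nondegeneracy lets me apply Theorem~\ref{Thm3.1}(ii): the S- and M-multipliers coincide in a singleton $\{\lambda_0\}$ and $\nabla^2_{xx}L(\bar x,\lambda_0)(u,u)-\sigma_K(\lambda_0)\geq0$. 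Here $\lambda_0\in\Lambda^s(\bar x;u)\subset\widehat N_{T_D(z)}(w)=K^\circ$, so the support function of the cone $K$ satisfies $\sigma_K(\lambda_0)=0$, again yielding $\nabla^2_{xx}L(\bar x,\lambda_0)(u,u)\geq0$.

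For the converse the plan is to verify the hypotheses of Theorem~\ref{Theorem5.9} for each nonzero critical direction $u$. The multiplier hypothesis furnishes $\alpha\geq0$ and $\lambda\in\widehat N^p_D(z;w)$, not both zero, with the stationarity $\nabla_xL^\alpha(\bar x,\lambda)=0$ (so that \eqref{Alpha} holds) and $\nabla^2_{xx}L^\alpha(\bar x,\lambda)(u,u)>0$. It remains to check that the second subderivative term in \eqref{alpha-sufficient} vanishes. Since $\lambda\in\widehat N^p_D(z;w)\subset\{w\}^\perp$ by definition, Theorem~\ref{ThSOVAGamma}(i) applied with $G$ the identity mapping and $\Gamma=D$ gives ${\rm d}^2\delta_D(z;\lambda)(w)=-\sigma_{K}(\lambda)$; and because $\lambda\in\widehat N^p_D(z;w)=\widehat N_{T_D(z)}(w)=K^\circ$ (the remark following Theorem~\ref{ThSOVAGamma} together with \eqref{eqn8}), the cone support function again gives $\sigma_K(\lambda)=0$. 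Hence ${\rm d}^2\delta_D(z;\lambda)(w)=0$, and \eqref{alpha-sufficient} reduces to the assumed strict inequality $\nabla^2_{xx}L^\alpha(\bar x,\lambda)(u,u)>0$; Theorem~\ref{Theorem5.9} then certifies that $\bar x$ is an essential local minimizer of second order.

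The mathematical content is light once this reduction is set up: the only genuinely needed computations are the three cone identities $\hat\sigma_K=0$ on $N_K(0)$, $\sigma_K=0$ on $K^\circ$, and ${\rm d}^2\delta_D(z;\cdot)(w)=0$ on $K^\circ$. The main point requiring care is the bookkeeping of which normal cone each multiplier inhabits — $N_K(0)$ for the lower generalized support function via Proposition~\ref{PropHatSigma}, and the polar $K^\circ$ for both the ordinary support function and the second subderivative via \eqref{eqn8} — so that polyhedrality can collapse the curvature term in each case. The one external assumption I must flag is the stationarity $\nabla_xL^\alpha(\bar x,\lambda)=0$ in the converse, which is implicit in calling $\lambda$ a multiplier and is exactly what triggers \eqref{Alpha} in Theorem~\ref{Theorem5.9}.
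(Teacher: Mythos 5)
Your proof is correct and follows the paper's overall reduction: both necessary parts come from Theorem \ref{Thm3.1} with $C:=D$, the converse from Theorem \ref{Theorem5.9}, and in each case polyhedrality of $D$ is used to make the curvature term vanish. Where you differ is in how that vanishing is established. Writing $z:=g(\xb)$, $w:=\nabla g(\xb)u$, the paper specializes Theorem \ref{ThSOVAGamma}(iii) (for $\hat\sigma$) and Theorem \ref{ThSOVAGamma}(ii) (for ${\rm d}^2\delta_D$) to $G$ the identity and $\Gamma=D$, where the two-sided bounds $\skalp{y^*,\nabla^2 G(\xb)(u,u)}$ collapse to zero; you instead exploit the cone structure directly: by Proposition \ref{Prop2.13}, $K:=T^2_D(z;w)=T_{T_D(z)}(w)$ is a nonempty closed cone with $N_K(0)=N_D(z;w)$ (by \eqref{eqn9}) and $K^\circ=\widehat N_{T_D(z)}(w)$ (by \eqref{eqn8}), so Proposition \ref{PropHatSigma} with translation zero gives $\hat\sigma_K(\lambda)=0$ for $\lambda\in N_K(0)$, while elementary polarity ($0\in K$ forces $\sigma_K\geq 0$, and $\lambda_0\in K^\circ$ forces $\sigma_K(\lambda_0)\leq 0$) gives $\sigma_K(\lambda_0)=0$. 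This is a more elementary treatment of the necessary conditions and, in part (ii), it makes explicit the $\sigma$-computation that the paper leaves implicit when it asserts that both parts follow from $\hat\sigma=0$ (the paper's route needs the extra observation $\sigma\geq\hat\sigma=0$). For the converse you still pass through Theorem \ref{ThSOVAGamma}(i) (the equality ${\rm d}^2\delta_D(z;\lambda)(w)=-\sigma_K(\lambda)$ on $\{w\}^\perp$) where the paper invokes part (ii) of that theorem; both are legitimate, since MSCQ holds trivially for the identity map. Finally, you correctly flag that the stationarity condition $\nabla_x L^\alpha(\xb,\lambda)=0$ must be read into the hypothesis of the converse as part of what it means for $(\alpha,\lambda)$ to be a multiplier; the paper's proof makes the same tacit assumption when it applies Theorem \ref{Theorem5.9}.
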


\begin{proof}
To obtain the necessary optimality conditions,  it suffices to   calculate $\hat{\sigma}_{T_D^2(g(\xb); \nabla g(\xb)u)}(\lambda)$ and apply Theorem \ref{Thm3.1}.
Under the assumptions,  the second-order necessary optimality condition
 (\ref{lambda}) holds with $C:=D$.
 It follows that
 $\hat{\sigma}_{T_D^2(g(\xb); \nabla g(\xb)u)}(\lambda)<\infty$,
 which ensures
 $\lambda \in {\rm dom\,} \hat{\sigma}_{T_D^2(g(\xb); \nabla g(\xb)u)}$.
 Taking $G$ to be the identity mapping in Theorem
 \ref{ThSOVAGamma}(iii),
 we obtain $\Gamma=D$ and
 $\hat{\sigma}_{T_D^2(g(\xb); \nabla g(\xb)u)}(\lambda)=0.$
 Hence, the necessary optimality conditions (a) and (b) hold for (DP).

To obtain the sufficient optimality condition, it suffices to calculate
${\rm d}^2\delta_D(g(\xb); \lambda)(\nabla g(\xb) u)$ and apply Theorem \ref{Theorem5.9} with $C:=D$.
Again, applying Theorem \ref{ThSOVAGamma}(ii) with $G$ being the identity mapping, we have ${\rm d}^2\delta_D(g(\xb); \lambda)(\nabla g(\xb) u)=0$ since $\lambda \in \widehat N^p_D(g(\bar x);\nabla g(\bar x)u)$ and the result follows.
\end{proof}
{Note that we have only used the results from Section 5 for the trivial identity mapping.
Their full potential will be seen when applied to sets of the form \eqref{eq:StructureOfC}.}
\section{Concluding Remarks}\vspace*{-0.1in}
In this paper, we have reviewed the second-order necessary optimality conditions and derived second-order sufficient optimality conditions for the general problem (GP). Since these conditions involve some second-order objects that need to be calculated or estimated, we have conducted second-order variational analysis of disjunctive systems. As an illustration, we have shown that one can recover second-order optimality conditions for disjunctive programs.   In the forthcoming work \cite{BeGfrYeZhouZhang}, using the analysis of disjunctive systems from this paper as a tool, we will develop the variational analysis of the set given by \eqref{eq:StructureOfC},
which will enable us to apply our second-order optimality conditions from Theorem \ref{Thm3.1} and Theorem \ref{Theorem5.9}.\vspace*{-0.1in}

\vskip 7mm
\noindent
{
{\bf Acknowledgements.}
The authors are indebted to the anonymous referees for their valuable suggestions that helped us improve the original presentation of the paper.}

\end{document}